\documentclass[reqno]{amsart}
\pdfoutput=1

\usepackage{amssymb}
\usepackage{amsmath}
\usepackage{amsthm}
\usepackage{tikz}
\usetikzlibrary{shapes, cd}
\usepackage{latexsym}
\usepackage{here}
\usepackage[justification=centering]{caption}

\newtheorem{theorem}{Theorem}[section]
\newtheorem{proposition}[theorem]{Proposition}
\newtheorem{corollary}[theorem]{Corollary}
\newtheorem{lemma}[theorem]{Lemma}
\theoremstyle{definition}

\newtheorem{remark}[theorem]{Remark}

\newcommand{\dia}{\Diamond}
\newcommand{\simpsimeq}{\mbox{$\diagup \! \! \! \! \: \! \: \searrow \: $}}
\newcommand{\del}{\mathrm{Del}}
\newcommand{\add}{\mathrm{Add}}

\begin{document}
\title[]{Simple Homotopy Types of Independence Complexes of Graphs involving Grid Graphs}
\author[]{Kengo Okura}
\address{Sagano High School, 15, Tokiwadannoue-cho, Ukyo-ku, Kyoto, Japan}
\email{okura.kengo.k35@kyoto-u.jp}
\keywords{independence complex, simple homotopy type, cylindrical square grid graph, hexagonal grid graph}
\subjclass[2010]{05C69, 57Q10}
%05C69: combinatorics; graph theory; dominating sets, independent sets, cliques
%57Q10: manifolds and cell complexes; PL topology; simple homotopy type, Whitehead torsion, Reidemeister-Franz torsion, etc.

\begin{abstract}
We show that if a graph $G$ involves a certain square grid graph as a full subgraph, then a certain operation on it yields a simplicial suspension of the independence complex of $G$. This generalizes a result of Csorba. As a corollary, we determine the simple homotopy types of the independence complexes of some grid graphs.
\end{abstract}
\maketitle

\section{Introduction}
\label{introduction}
Let $G$ be a finite graph. Recall that an independent set of $G$ is a set of vertices such that any two elements are not joined by an edge. Then the set of independent sets of $G$ is closed under taking subsets, and so it forms a simplicial complex which is called the {\it independence complex} of $G$ and denoted by $I(G)$. Independence complexes are well studied in combinatorial algebraic topology, and in particular, there are several results on an explicit determination of the homotopy types of the independence complexes of grid graphs. In this paper, we study the simple homotopy types, instead of the homotopy types, of the independence complexes of graphs involving grid graphs.

We first refine a result of Csorba \cite[Theorem 11]{Csorba09}, an useful tool for determining the homotopy types of independence complexes, by replacing a homotopy equivalence with a simple homotopy equivalence. If simplicial complexes $K$ and $L$ have the same simple homotopy type, then we write $K \simpsimeq L$.

\begin{theorem}
\label{mainm1}
Let $G$ be a graph with an edge $uv$. If $H$ is obtained from $G$ by replacing the edge $uv$ with a length four path $u-x-y-z-v$, then
$$I(H) \simpsimeq \Sigma I(G). $$
\end{theorem}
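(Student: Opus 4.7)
Set $L := I(G-uv)$, $L_u := I(G-u)$, $L_v := I(G-v)$, $L_{uv} := I(G-u-v)$, and $K := I(G)$. Since $xy$ and $yz$ are edges of $H$, every face of $I(H)$ meets $\{x,y,z\}$ in one of the five sets $\emptyset,\{x\},\{y\},\{z\},\{x,z\}$, and collecting faces by this intersection gives the decomposition
\[
I(H) \;=\; \{y\}*L \;\cup\; \{x\}*L_u \;\cup\; \{z\}*L_v \;\cup\; \{x,z\}*L_{uv},
\]
where the four pieces are glued along their common sub-cones inside $L$. The first piece $\{y\}*L$ is a cone on $L$ and hence contractible.

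The plan is to establish $I(H)\simpsimeq\Sigma I(G)$ via two successive acyclic matchings in Forman's discrete Morse theory, since any acyclic matching is realized as a sequence of elementary simplicial collapses and thereby gives a simple homotopy equivalence. The first matching is the element matching at $y$: pair every face $\sigma$ of $I(H)$ satisfying $\sigma\cap\{x,y,z\}=\emptyset$ with the face $\sigma\cup\{y\}$. This is acyclic by construction and collapses away the cone $\{y\}*L$; the critical faces that remain are exactly those meeting $\{x,z\}$, namely $\{x\}\cup\sigma$ for $\sigma\in L_u$, $\{z\}\cup\sigma$ for $\sigma\in L_v$, and $\{x,z\}\cup\sigma$ for $\sigma\in L_{uv}$.

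A second matching then cancels the double cone $\{x,z\}*L_{uv}$: for each $\sigma\in L_{uv}$, pair $\{z\}\cup\sigma$ with $\{x,z\}\cup\sigma$. One checks that the combined matching remains acyclic, so the reduction is still a simple homotopy equivalence. The surviving critical cells are $\{x\}\cup\sigma$ for $\sigma\in L_u$, together with $\{z\}\cup\sigma$ for $\sigma\in L_v$ with $u\in\sigma$. Using the identities $L_u\cup L_v=K$ and $L_u\cap L_v=L_{uv}$, the number of surviving cells equals $|K|$; identifying $x$ and $z$ with the two suspension vertices, they assemble (with their Morse-induced boundary maps) into a CW complex that is homotopy equivalent to $\Sigma K=\{x,z\}*K$.

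The principal obstacle I anticipate is upgrading this cell-level picture to a bona fide simple homotopy equivalence between the two simplicial complexes $I(H)$ and $\Sigma I(G)$: the discrete Morse reductions yield $I(H)\simpsimeq(\text{Morse CW complex})$, but bridging this Morse CW complex to the simplicial complex $\Sigma I(G)$ still requires a short additional sequence of elementary simplicial collapses and expansions (or, in the special case where $\Sigma I(G)$ is simply connected, an appeal to vanishing of the Whitehead group). Carrying out that bridging explicitly for arbitrary $G$ is exactly the sort of combinatorial bookkeeping on simplicial complexes that the paper appears to be set up to handle.
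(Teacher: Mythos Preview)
Your plan is honest about its own gap, and that gap is genuine. Discrete Morse theory gives you $I(H)\simpsimeq K_M$ where $K_M$ is the Morse CW complex on your critical cells, but to conclude $I(H)\simpsimeq\Sigma I(G)$ you still need $K_M\simpsimeq\Sigma I(G)$, and nothing in your argument supplies this. Your cell count and the identification of critical cells with the cone points $x,z$ give at best a \emph{homotopy} equivalence $K_M\simeq\Sigma I(G)$, and upgrading that to a simple homotopy equivalence is not free: $\Sigma I(G)$ need not be simply connected (take $G$ a single edge, so $\Sigma I(G)\simeq S^1$), so you cannot appeal to vanishing Whitehead torsion in general. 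Producing an explicit sequence of elementary moves from $K_M$ to the simplicial complex $\Sigma I(G)$ is exactly the part you left undone, and it is not clear how to do it uniformly in $G$ from your description.

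The paper bypasses this problem entirely by never leaving the category of independence complexes. It uses three graph-level moves, each of which is an instance of Lemma~\ref{collapselemma} and hence a genuine simplicial collapse or expansion:
\[
\add(uv,y),\qquad \del(ux,z),\qquad \del(z,x).
\]
The first adds back the edge $uv$ (legal because $y$ is isolated once $N[u]\cup N[v]$ is removed), the second deletes the edge $ux$ (legal because $z$ becomes isolated once $N[u]\cup N[x]$ is removed), and the third deletes the vertex $z$ (legal because $x$ is then isolated once $N[z]$ is removed). The resulting graph is $G$ together with a disjoint edge $xy$, so $I(H)\simpsimeq I(G\sqcup e)=I(G)*\partial\Delta^1=\Sigma I(G)$, and every step is a simplicial collapse or expansion between independence complexes. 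Your element matchings are morally the same kind of pairing that underlies Lemma~\ref{collapselemma}, but by choosing them at the graph level the paper guarantees that each intermediate stage is again an independence complex, so no CW-to-simplicial bridging is ever needed.
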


The planer square grid graph $P_{m,n}$ is the graph of the rectangular arrangement of $(m-1)(n-1)$ squares on a plane. That is,
\begin{align*}
&V(P_{m,n}) = \{ 1, 2, \ldots ,m \} \times \{ 1, 2, \ldots ,n \} , \\
&E(P_{m,n}) = \left\{ (i_1,j_1)(i_2,j_2) \  \middle| \ 
\begin{aligned}
&i_1 = i_2 \text{ and } j_2=j_1+1 \leq n, \\
&\text{ or } i_2=i_1+1 \leq m \text{ and } j_1 = j_2 
\end{aligned} \right\}.
\end{align*}
Here we interpret Theorem \ref{mainm1} as a behavior of the simple homotopy types of $I(G)$ with respect to a replacement of a small grid graph $P_{1,2}$ in $G$ with a large grid graph $P_{1,4}$. We next generalize Theorem \ref{mainm1} from this viewpoint.

\begin{theorem}
\label{mainm2}
Let $G$ be a graph with $P_{2,2}$ as a full subgraph. If $H$ is obtained from $G$ by replacing $P_{2,2}$ with $P_{2,4}$ as in Figure \ref{mainfigure2}, then
$$I(H) \simpsimeq \Sigma I(G).$$
\begin{figure}[H]
\centering
\begin{tabular}{ccc}
{\begin{tikzpicture}
[scale=0.75]
\foreach \x in {1,4} {\foreach \y in {1,2} {\node at (\x,\y) [fill,circle,inner sep=1.0pt] {};};};
\draw (1,1)--(4,1) (1,2)--(4,2);
\draw (1,1)--(1,2) (4,1)--(4,2);
\draw (0.5,1.7)--(1,2) (0.5,2.3)--(1,2) (0.5,0.7)--(1,1) (4,2)--(4.5, 1.7) (4,2)--(4.5,2) (4,2)--(4.5,2.3) (4,1)--(4.5,1.3);
\node at (2.5,2.5) {$G$};
\end{tikzpicture}}
&{\begin{tikzpicture}[scale=0.75]\node at (0,2.3) {};\node at (0,0.7) {};\node at (0,1.5) {$\longrightarrow$};\end{tikzpicture}}&
{\begin{tikzpicture}
[scale=0.75]
\foreach \x in {1,2,3,4} {\foreach \y in {1,2} {\node at (\x,\y) [fill,circle,inner sep=1.0pt] {};};};
\draw (1,1)--(2,1)--(3,2)--(4,2);
\draw (1,2)--(2,2)--(3,1)--(4,1);
\draw (1,1)--(1,2) (2,1)--(2,2) (3,1)--(3,2) (4,1)--(4,2);
\draw (0.5,1.7)--(1,2) (0.5,2.3)--(1,2) (0.5,0.7)--(1,1) (4,2)--(4.5, 1.7) (4,2)--(4.5,2) (4,2)--(4.5,2.3) (4,1)--(4.5,1.3);
\node at (2.5,2.5) {$H$};
\end{tikzpicture}}
\end{tabular}
\caption{The replacement in Theorem \ref{mainm2}}
\label{mainfigure2}
\end{figure}
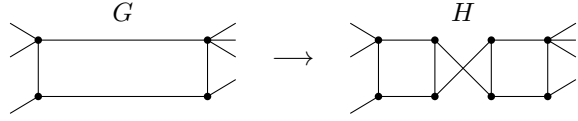
\end{theorem}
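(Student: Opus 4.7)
The plan is to reduce Theorem \ref{mainm2} to Theorem \ref{mainm1} by exhibiting an auxiliary graph $H^\dagger$ obtained from $G$ by a single length-four path replacement, and then showing $I(H) \simpsimeq I(H^\dagger)$ by a short sequence of elementary simple homotopy moves on the independence complex.

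Label the corner vertices shared between $G$ and $H$ as $a = (1,1), b = (1,2), c = (4,1), d = (4,2)$, and the four new middle vertices of $H$'s gadget as $p = (2,1), q = (2,2), r = (3,1), s = (3,2)$. Using the edges $ap, ps, sr, rc$ of $H$, the sequence $a-p-s-r-c$ forms a length-four path in $H$ with all three interior vertices new. Let $H^\dagger$ be the graph obtained from $G$ by replacing the edge $ac$ with this length-four path. Theorem \ref{mainm1} then gives
$$I(H^\dagger) \simpsimeq \Sigma I(G),$$
so it remains to prove $I(H) \simpsimeq I(H^\dagger)$.

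Comparing $H^\dagger$ and $H$ locally, three modifications transform $H^\dagger$ into $H$: (i) add the extra vertex $q$ with neighborhood $\{b, p, r\}$; (ii) add the edge $sd$; and (iii) remove the edge $bd$. Each modification corresponds to attaching or removing a subcomplex of the form $\sigma * X$ in the independence complex, where $\sigma$ is either a vertex or a 1-simplex and $X$ is a link of the form $I(H' - N[\sigma])$ for an intermediate graph $H'$. The decisive point is that in each case the link $X$ is collapsible, because the subgraph $H' - N[\sigma]$ has an isolated vertex in its gadget region: in (i), deleting $\{b, p, r\}$ from $H^\dagger$ leaves $s$ with no gadget neighbors (since $s$'s only neighbors in the gadget of $H^\dagger$ are $p$ and $r$); in (ii) and (iii), the removal of $N[\{s,d\}]$ or $N[\{b,d\}]$ similarly leaves one or more isolated vertices among $\{p, q, r\}$. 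Granted the collapsibility of $X$, the standard argument of lifting a collapse of $X$ to the attaching cone $\sigma * X$ shows that each modification is an elementary simple expansion or collapse. Chaining the three modifications yields $I(H) \simpsimeq I(H^\dagger) \simpsimeq \Sigma I(G)$.

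The main obstacle I anticipate is verifying the collapsibility claims carefully in the presence of external edges of $G$ incident to $a, b, c, d$: while the gadget becomes collapsible by the isolated-vertex mechanism, external vertices attached to the remaining corners could in principle introduce additional simplices whose effect must be tracked. Showing that the isolated-vertex argument extends to full collapsibility of the link complexes (and not only of their gadget restrictions) is the main technical step, but this should follow from the standard fact that isolated vertices in any graph produce a cone structure on the independence complex.
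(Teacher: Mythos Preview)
Your proposal is correct and takes a genuinely different route from the paper. The paper gives a direct sequence of six operations taking $H$ to $G\sqcup e$: in its column notation it adds the two diagonals $1\overline{4}$ and $\overline{1}4$, deletes the edges $12$ and $\overline{1}\overline{2}$, and then deletes the vertices $3$ and $\overline{3}$, each step justified by an isolated witness via Lemma~\ref{collapselemma}. Your approach instead reduces to Theorem~\ref{mainm1}: you observe that $a\text{--}p\text{--}s\text{--}r\text{--}c$ is a length-four path in $H$ with new interior vertices, so Theorem~\ref{mainm1} applies to the edge $ac$ of $G$ and gives $I(H^\dagger)\simpsimeq\Sigma I(G)$; then three further elementary moves (insert the vertex $q$, add the edge $sd$, delete the edge $bd$) bridge $H^\dagger$ to $H$. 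The total work is comparable (three moves here plus three in the proof of Theorem~\ref{mainm1}), but your route has the conceptual payoff of exhibiting Theorem~\ref{mainm2} as a consequence of Theorem~\ref{mainm1} rather than as an independent computation, while the paper's route is self-contained and lands on $G\sqcup e$ in one pass.

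Your closing worry about external edges is unnecessary and you should simply drop it. The isolated witnesses you identify---$s$ in step~(i), $q$ in step~(ii), and $p$ (or $r$) in step~(iii)---are all \emph{new} gadget vertices, so by construction they have no neighbors outside the gadget. Their isolation in the full graph $H'\setminus N[\sigma]$ is therefore immediate, and Lemma~\ref{collapselemma} (or equivalently your cone argument) applies directly without any further analysis of the external structure of $G$. You may also want to phrase steps (i)--(iii) explicitly in the paper's $\del$/$\add$ language rather than the more abstract ``attach $\sigma*X$ with $X$ collapsible'' framing; the content is the same but the former lets you cite Lemma~\ref{collapselemma} verbatim.
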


\begin{theorem}
\label{mainm3}
Let $G$ be a graph with $P_{3,2}$ as a full subgraph. If $H$ is obtained from $G$ by replacing $P_{3,2}$ with $P_{3,6}$ as in Figure \ref{mainfigure3}, then
$$I(H) \simpsimeq \Sigma^3 I(G).$$
\end{theorem}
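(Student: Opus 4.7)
The plan is to apply Theorem~\ref{mainm2} three times in succession, accumulating three suspensions, and then to reconcile the resulting graph with $H$ through standard simple-homotopy moves on independence complexes.

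First I would observe that the $P_{3,2}$ sitting inside $G$ contains two full $P_{2,2}$ subgraphs, namely those on the pairs of adjacent rows $i\in\{1,2\}$ and $i\in\{2,3\}$ in the $(i,j)$-labelling of the paper; moreover, each application of Theorem~\ref{mainm2} produces a fresh full $P_{2,2}$ on the four newly added vertices (the ``far end'' of the newly created $P_{2,4}$), onto which the theorem can be applied again. Fullness in each case follows from the fullness of the preceding $P_{3,2}$ (or $P_{2,4}$) plus the fact that the new vertices of Theorem~\ref{mainm2} attach only to the vertices of the old $P_{2,2}$. Chaining Theorem~\ref{mainm2} three times --- first on the rows $\{1,2\}$ of $P_{3,2}$, then on a freshly produced $P_{2,2}$ among the new vertices, then once more --- yields a sequence $G=G_0,G_1,G_2,G_3$ with $I(G_k)\simpsimeq \Sigma^k I(G)$, so that $I(G_3)\simpsimeq \Sigma^3 I(G)$.

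The remaining and hardest step is to show $I(G_3)\simpsimeq I(H)$. A direct count reveals that three applications of Theorem~\ref{mainm2} add $12$ vertices and $18$ edges to $G$, whereas the replacement $P_{3,2}\mapsto P_{3,6}$ adds $12$ vertices and $20$ edges; moreover the row $i=3$ of the original $P_{3,2}$ is not extended by any naive chain of the form above, so $G_3$ and $H$ differ as abstract graphs. I would close this gap via graph-theoretic folds --- if $N_G(v)\subseteq N_G(w)$ for a non-adjacent pair $v,w$, then $I(G)\simpsimeq I(G-w)$ --- to identify or remove the duplicated vertices introduced by overlapping applications of Theorem~\ref{mainm2}, together with explicit elementary collapses and expansions realized by the $\del$ and $\add$ operations already used in the proofs of Theorems~\ref{mainm1} and~\ref{mainm2}, to match the specific $P_{3,6}$ embedding in Figure~\ref{mainfigure3}. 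The major obstacle is precisely this combinatorial bookkeeping: selecting the three intermediate $P_{2,2}$'s so that the cumulative effect agrees with $H$ up to folds, verifying fullness at each stage, tracking how the ``twist'' pattern of Theorem~\ref{mainm2} composes across three iterations, and producing an explicit fold/collapse sequence carrying $G_3$ to $H$.
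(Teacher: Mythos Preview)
Your strategy---iterate Theorem~\ref{mainm2} three times to accumulate three suspensions, then reconcile the resulting graph $G_3$ with $H$---leaves a genuine gap at the second stage, which you yourself flag as ``the major obstacle'' without resolving it. After the three applications you describe (all on $2\times2$ blocks within rows $\{1,2\}$), row $3$ is never touched: it still consists of its two original vertices, while rows $1$ and $2$ have been stretched to eight vertices each. In $H$, by contrast, all three rows carry six vertices. Thus $G_3$ and $H$ differ structurally, not merely by relabeling, and establishing $I(G_3)\simpsimeq I(H)$ is a priori no easier than the theorem itself. Gesturing at folds and unspecified $\add/\del$ sequences does not close the gap: you would need to exhibit an explicit chain of such moves converting an $8{+}8{+}2$ configuration into a $6{+}6{+}6$ one while preserving all external attachments to $G$, and none is given.

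The paper does not go through Theorem~\ref{mainm2} at all. It produces a direct, explicit sequence of roughly thirty $\add$ and $\del$ operations, organized into four steps, carrying $H$ to $G\sqcup C_{1,8}$; then $I(C_{1,8})\simpsimeq\dia^2$ (Corollary~\ref{Cnsimple}) gives $I(H)\simpsimeq I(G)\ast\dia^2\simpsimeq\Sigma^3 I(G)$. The entire content of the proof is this explicit sequence, and there is no evident shortcut via the $2\times2$ case.
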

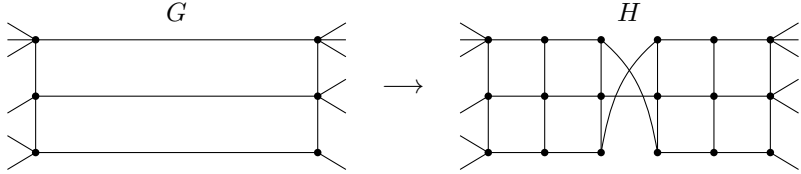
\begin{figure}[H]
\centering
\begin{tabular}{ccc}
{\begin{tikzpicture}[scale=0.75]
\foreach \x in {0,5} {\foreach \y in {1,2,3} {\node at (\x,\y) [fill,circle,inner sep=1.0pt] {};};};
\draw (0,1)--(0,2)--(0,3) (5,1)--(5,2)--(5,3);
\draw (-0.5,1.3)--(0,1) (-0.5,0.7)--(0,1) (5,3)--(5.5,3.3) (5,3)--(5.5,3) (5,3)--(5.5,2.7);
\draw (-0.5,1.7)--(0,2) (5,2)--(5.5,1.7) (5,2)--(5.5,2.3);
\draw (-0.5,3)--(0,3) (-0.5,2.7)--(0,3) (-0.5,3.3)--(0,3) (5,1)--(5.5,0.7);
\draw (0,1)--(5,1) (0,2)--(5,2) (0,3)--(5,3);
\node at (2.5,3.5) {$G$};
\end{tikzpicture}}
&{\begin{tikzpicture}[scale=0.75]\node at (0,3.3) {};\node at (0,0.7) {};\node at (0,2) {$\longrightarrow$};\end{tikzpicture}}&
{\begin{tikzpicture}[scale=0.75]
\foreach \x in {0,1,2,3,4,5} {\foreach \y in {1,2,3} {\node at (\x,\y) [fill,circle,inner sep=1.0pt] {};};};
\draw (0,1)--(0,2)--(0,3) (1,1)--(1,2)--(1,3) (2,1)--(2,2)--(2,3) (3,1)--(3,2)--(3,3) (4,1)--(4,2)--(4,3) (5,1)--(5,2)--(5,3);
\draw (0,1)--(1,1)--(2,1) (3,3)--(4,3)--(5,3);
\draw (0,2)--(1,2)--(2,2)--(3,2)--(4,2)--(5,2);
\draw (0,3)--(1,3)--(2,3) (3,1)--(4,1)--(5,1);
\draw (2,1) to [out=80, in=220] (3,3);
\draw (2,3) to [out=320, in=100] (3,1);
\draw (-0.5,1.3)--(0,1) (-0.5,0.7)--(0,1) (5,3)--(5.5,3.3) (5,3)--(5.5,3) (5,3)--(5.5,2.7);
\draw (-0.5,1.7)--(0,2) (5,2)--(5.5,1.7) (5,2)--(5.5,2.3);
\draw (-0.5,3)--(0,3) (-0.5,2.7)--(0,3) (-0.5,3.3)--(0,3) (5,1)--(5.5,0.7);
\node at (2.5,3.5) {$H$};
\end{tikzpicture}}
\end{tabular}
\caption{The replacement in Theorem \ref{mainm3} }
\label{mainfigure3}
\end{figure}

Now we apply the above results to determine the simple homotopy types of the independence complexes of the following grid graphs. 
Let $C_{m,n}$ be the graph obtained from $P_{m,n+1}$ by identifying vertices $(i,1)$ and $(i,n+1)$ for $i=1,\ldots,m$ and the corresponding edges. 
The homotopy type of $I(C_{1,n})$ is determined by Kozlov \cite[Proposition 5.2]{Koz99}, and we first refine this result to simple homotopy types. 
Let ${\dia}^n$ be the join of $n+1$ copies of $\partial\Delta^1$. Then ${\dia}^n$ is a triangulation of the $n$-sphere.
\begin{corollary}
\label{Cnsimple}
We have
\begin{align*}
I(C_{1,3k+i}) \simpsimeq \left\{
\begin{aligned}
& {\dia}^{k-1} \vee {\dia}^{k-1} & &\quad (i=0), \\
& {\dia}^{k-1} & &\quad (i=1), \\
& {\dia}^{k} & &\quad (i=2) .
\end{aligned}\right. 
\end{align*}
\end{corollary}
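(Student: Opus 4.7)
I would proceed by induction on $n = 3k+i$, with Theorem \ref{mainm1} providing the inductive step. Given any edge of the cycle $C_{1,n}$, replacing it by a length-four path yields $C_{1,n+3}$, so Theorem \ref{mainm1} immediately gives the recursion
\[ I(C_{1,n+3}) \simpsimeq \Sigma\, I(C_{1,n}). \]
The three residue classes $i \in \{0,1,2\}$ then evolve independently, so the corollary reduces to three base cases ($n = 3, 4, 5$) together with the identities $\Sigma \dia^j = \dia^{j+1}$ and $\Sigma(\dia^j \vee \dia^j) \simpsimeq \dia^{j+1} \vee \dia^{j+1}$. The first identity is literal, since $\dia^j$ is by definition the $(j+1)$-fold join of $\partial\Delta^1 = S^0$ and simplicial suspension is one further join with $S^0$.

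For the base cases I would compute directly. $I(C_{1,3})$ is the discrete set of three vertices (no two vertices of a triangle are independent), which coincides on the nose with $\dia^0 \vee \dia^0$. $I(C_{1,4})$ has its four vertices together with the two diagonals $\{1,3\}$ and $\{2,4\}$ as its only $1$-simplices; this is a disjoint union of two segments, which collapses to the two-point space $\dia^0$. The complex $I(C_{1,5})$ is itself a $5$-cycle (its edges are the five non-adjacent pairs in $C_5$), and I would reduce it simple-homotopically to the $4$-cycle $\dia^1$ via the following three moves: adjoin the chord $\{a,c\}$ together with the triangle $\{a,b,c\}$ as an elementary expansion, collapse that triangle through its free face $\{a,b\}$, then collapse the resulting pendant edge $\{b,c\}$ along with the vertex $b$.

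Iterating the recursion $k-1$ times from each base case, using $\Sigma \dia^j = \dia^{j+1}$ throughout and (for the $i=0$ thread) the wedge--suspension identity, yields the three cases of the corollary. The distributivity $\Sigma(\dia^j \vee \dia^j) \simpsimeq \dia^{j+1} \vee \dia^{j+1}$ needs justification: topologically $\Sigma(A \vee B) \simeq \Sigma A \vee \Sigma B$ for well-pointed $A, B$, and in our situation the relevant fundamental groups (trivial for $k \geq 3$, free of rank $2$ for $k = 2$) have trivial Whitehead group, so simple-homotopy equivalence coincides with ordinary homotopy equivalence.

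The main obstacle is the base case $n = 5$: producing a simple-homotopy equivalence, not merely a homotopy equivalence, between the $5$-cycle and $\dia^1$. The explicit expansion--collapse sequence above resolves this; everything else is routine bookkeeping on top of Theorem \ref{mainm1}.
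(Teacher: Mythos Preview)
Your approach is essentially the paper's: both apply Theorem~\ref{mainm1} to obtain $I(C_{1,n+3}) \simpsimeq \Sigma I(C_{1,n})$ and then induct. The only real difference is the choice of base cases. The paper takes $n=1,2,3$, where the independence complexes are \emph{literally} $\dia^{-1}$, $\dia^0$, and $\dia^0 \vee \dia^0$ (a loop on one vertex, a double edge, and a triangle have empty, two-point, and three-point independence complexes respectively). This sidesteps your $n=5$ computation entirely and also covers the $k=0$ cases $n=1,2$, which your base cases $n=3,4,5$ omit. On the other hand, you are more explicit than the paper about the step $\Sigma(\dia^j \vee \dia^j) \simpsimeq \dia^{j+1} \vee \dia^{j+1}$; the paper states the induction and base cases and leaves this passage tacit, whereas you justify it via triviality of the relevant Whitehead groups.
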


The homotopy types of $I(C_{2,n})$ and $I(C_{3,n})$ are also determined in previous studies. Adamaszek \cite[Theorem 1.1 b)]{Adahard} obtained $I(C_{2,n}) \simeq \Sigma^2 I(C_{2,n-4})$. In addition, the homotopy types of $I(C_{m,n})$ for $n=2,3,4,5$, which complete the determination of the homotopy type of $I(C_{2,n})$, are determined by Thapper \cite[Proposition 3.1, 3.2, 3.3, 3.4]{Thap08}. 
Iriye \cite[Theorem 1.3]{Iriye12} determined the homotopy type of $I(C_{3,n})$. We next refine these results to simple homotopy types.

\begin{corollary}
\label{CnnCnnnsimple}
We have
\begin{align*}
&I(C_{2,4k+i}) \simpsimeq \left \{
\begin{aligned}
&{\bigvee}_{3} {\dia}^{2k-1} & &(i = 0), \\
&{\dia}^{2k-1} & &(i = 1), \\
&{\dia}^{2k} & &(i = 2), \\
&{\dia}^{2k+1} & &(i = 3) ,
\end{aligned} \right. 
&I(C_{3,8k+i}) \simpsimeq \left \{
\begin{aligned}
&{\bigvee}_{5} {\dia}^{6k-1} & &(i = 0), \\
&{\dia}^{6k-1} & &(i = 1), \\
&{\dia}^{6k+1} & &(i = 2,3), \\
&{\bigvee}_{3} {\dia}^{6k+2} & &(i = 4), \\
&{\dia}^{6k+3} & &(i = 5,6), \\
&{\dia}^{6k+5} & &(i = 7) .
\end{aligned} \right.
\end{align*}
\end{corollary}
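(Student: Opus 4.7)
The plan is to prove both formulas by induction on $n$, where the inductive step is set up by applying the relevant replacement theorem twice at disjoint sites so that the implicit twists cancel.

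For the $C_{2,n}$ family, every pair of adjacent columns of $C_{2,n}$ is a full $P_{2,2}$ subgraph, so Theorem \ref{mainm2} applies. The replacement of Figure \ref{mainfigure2} contains a crossing between its two middle columns, so a single application injects a half-twist into the cyclic structure and converts the cylindrical ladder into a M\"obius ladder on $n+2$ columns. A second application at a different pair of adjacent columns injects a second half-twist; after swapping rows $1$ and $2$ along the arc between the two replacement sites both twists disappear and the resulting graph is isomorphic to $C_{2,n+4}$. Hence $I(C_{2,n+4}) \simpsimeq \Sigma^2 I(C_{2,n})$ for all $n \geq 2$, and the corollary for $C_{2,n}$ is reduced to the base cases $n \in \{2,3,4,5\}$, each a small prism graph whose independence complex can be simple-collapsed to the predicted form by a direct hand computation (for instance $I(C_{2,2}) = I(C_4)$ is two disjoint edges, which collapses to $\dia^0$).

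For the $C_{3,n}$ family the same recipe works with Theorem \ref{mainm3} in place of Theorem \ref{mainm2}, the twist now being the involution swapping rows $1$ and $3$ while fixing row $2$. Two applications at disjoint pairs of adjacent columns therefore cancel (after relabeling $1 \leftrightarrow 3$ along the arc between the two replacement sites) and produce $C_{3,n+8}$ from $C_{3,n}$, yielding $I(C_{3,n+8}) \simpsimeq \Sigma^6 I(C_{3,n})$. This reduces the claim to the eight base cases $n \in \{2,\ldots,9\}$: the first four are given by Thapper and the last four by Iriye, and in both sources the homotopy equivalences are realized by folds and elementary collapses, hence are already simple homotopy equivalences.

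The main obstacle will be the verification of the base cases, particularly $I(C_{3,n})$ for $n = 6, 7, 8, 9$: these complexes are already fairly large, so one must either reread Iriye's argument to check that each step is a fold, or construct an explicit discrete Morse function on $I(C_{3,n})$ whose critical cells give the predicted wedge of $\dia$-spheres. Once they are secured, the twist-cancellation recursions above close the induction formally.
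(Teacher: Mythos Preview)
Your twist-cancellation idea is correct and is essentially what the paper does, but you apply Theorem \ref{mainm2} (resp.\ \ref{mainm3}) \emph{twice} to stay inside the $C$-family, whereas the paper applies it \emph{once} and passes through the M\"obius grids $M_{m,n}$: a single application gives $I(C_{2,n+2}) \simpsimeq \Sigma I(M_{2,n})$ and $I(M_{2,n+2}) \simpsimeq \Sigma I(C_{2,n})$, and similarly $I(C_{3,n+4}) \simpsimeq \Sigma^3 I(M_{3,n})$, $I(M_{3,n+4}) \simpsimeq \Sigma^3 I(C_{3,n})$. This intertwining proves Corollaries \ref{CnnCnnnsimple} and \ref{MnnMnnnsimple} simultaneously, and, more importantly, shrinks the list of base cases to $n=1,2$ for $m=2$ and $n=1,2,3,4$ for $m=3$. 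Each of these is small enough that the paper disposes of it by an explicit short sequence of $\add/\del$ moves.

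Your route is logically sound, but the price is exactly where you put it: the base cases $C_{3,n}$ for $n=6,7,8,9$ are already substantial, and appealing to Thapper and Iriye does not settle them. Those papers establish \emph{homotopy} equivalences, and the assertion that every step there ``is a fold, hence a simple homotopy equivalence'' is not something you have checked---you flag it yourself as the main obstacle. Since the entire content of Corollary \ref{CnnCnnnsimple} is the upgrade from $\simeq$ to $\simpsimeq$, outsourcing precisely that upgrade for half of your base cases leaves a real gap. (You also silently drop $n=1$ in both families; this is trivial but should be mentioned.) The paper's single-step recursion through $M_{m,n}$ avoids all of this: the largest base case it has to handle by hand is $M_{3,4}$, and it does so with an explicit fifteen-move sequence.
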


Let $M_{m,n}$ be the graph obtained from $P_{m,n+1}$ by identifying vertices $(i,1)$ and $(m-i+1,n+1)$ for $i=1,\ldots,m$ and the corresponding edges (see Figure \ref{P34C34M34figure}).
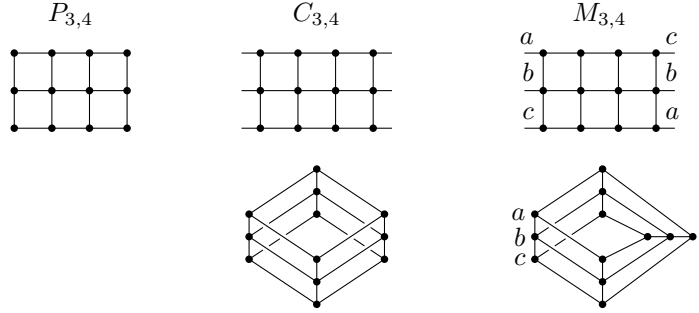
\begin{figure}[H]
\centering
\begin{tabular}{ccccc}
\begin{tikzpicture}
[scale=0.5]
\draw (0,0) grid (3,2);
\foreach \y in {0,1,2} {\foreach \x in {0,1,2,3} {\node [fill,circle,inner sep=1pt] at (\x, \y) {};};};
\node at (1.5,3) {$P_{3,4}$};
\node at (0,-0.5) {};
\end{tikzpicture}
& \hspace{0.3in}  &
\begin{tikzpicture}
[scale=0.5]
\draw (-0.5,0) grid (3.5,2);
\foreach \y in {0,1,2} {\foreach \x in {0,1,2,3} {\node [fill,circle,inner sep=1pt] at (\x, \y) {};};};
\node at (1.5,3) {$C_{3,4}$};
\node at (0,-0.5) {};
\end{tikzpicture}
& \hspace{0.3in}  &
\begin{tikzpicture}
[scale=0.5]
\draw (-0.5,0) grid (3.5,2);
\foreach \y in {0,1,2} {\foreach \x in {0,1,2,3} {\node [fill,circle,inner sep=1pt] at (\x, \y) {};};};
\node at (1.5,3) {$M_{3,4}$};
\node at (0,-0.5) {};
\node at (0,2) [above left] {$a$};
\node at (0,1) [above left] {$b$};
\node at (0,0) [above left] {$c$};
\node at (3,0) [above right] {$a$};
\node at (3,1) [above right] {$b$};
\node at (3,2) [above right] {$c$};
\end{tikzpicture}
\\
& &
\begin{tikzpicture}
[scale=0.3]
\foreach \x in {(0,2),(0,3),(0,4),(-3,0),(-3,1),(-3,2),(0,-2),(0,-1),(0,0),(3,0),(3,1),(3,2)} {\node [fill,circle,inner sep=1pt] at \x {};};
\draw (3,2)--(0,4)--(-3,2) (3,1)--(0,3)--(-3,1) (3,0)--(0,2)--(-3,0) (-3,0)--(0,-2)--(3,0);
\draw (0,2)--(0,4) (-3,0)--(-3,2) (0,0)--(0,-2) (3,0)--(3,2);
\foreach \x in {(-2.25,1.5),(-2.25,0.5),(-1.5,1),(2.25,1.5),(2.25,0.5),(1.5,1)} {\node [fill=white, circle, inner sep=1pt] at \x {};};
\draw  (-3,2)--(0,0)--(3,2)  (-3,1)--(0,-1)--(3,1);
\end{tikzpicture}
&\hspace{0.3in} &
\begin{tikzpicture}
[scale=0.3]
\foreach \x in {(0,2),(0,3),(0,4),(-3,0),(-3,1),(-3,2),(0,-2),(0,-1),(0,0),(2,1),(3,1),(4,1)} {\node [fill,circle,inner sep=1pt] at \x {};};
\draw (0,2)--(0,4) (-3,0)--(-3,2) (0,0)--(0,-2) (2,1)--(4,1);
\draw (0,3)--(-3,1) (0,-1)--(3,1)--(0,3);
\draw (0,4)--(-3,2) (0,0)--(2,1)--(0,2)--(-3,0)--(0,-2)--(4,1)--(0,4);
\node [fill=white, circle, inner sep=1pt] at (-2.25,1.5) {};
\node [fill=white, circle, inner sep=1pt] at (-2.25,0.5) {};
\node [fill=white, circle, inner sep=1pt] at (-1.5,1) {};
\draw (-3,2)--(0,0) (-3,1)--(0,-1);
\node at (-3,2) [left] {$a$};
\node at (-3,1) [left] {$b$};
\node at (-3,0) [left] {$c$};
\end{tikzpicture}
\end{tabular}
\caption{$P_{3,4}$, $C_{3,4}$ and $M_{3,4}$}
\label{P34C34M34figure}
\end{figure}

\noindent
We determine the simple homotopy types of $I(M_{2,n})$ and $I(M_{3,n})$.
\begin{corollary}
\label{MnnMnnnsimple}
We have
\begin{align*}
&I(M_{2,4k+i}) \simpsimeq \left \{
\begin{aligned}
&{\dia}^{2k-1} & &(i = 0), \\
&{\dia}^{2k} & &(i = 1), \\
&{\bigvee}_{3} {\dia}^{2k} & &(i = 2), \\
&{\dia}^{2k} & &(i = 3) ,
\end{aligned} \right. 
&I(M_{3,8k+i}) \simpsimeq \left \{
\begin{aligned}
&{\bigvee}_{3} {\dia}^{6k-1} & &(i = 0), \\
&{\dia}^{6k} & &(i = 1,2), \\
&{\dia}^{6k+2} & &(i = 3), \\
&{\bigvee}_{5} {\dia}^{6k+2} & &(i = 4), \\
&{\dia}^{6k+2} & &(i = 5), \\
&{\dia}^{6k+4} & &(i = 6,7) .
\end{aligned} \right. 
\end{align*}
\end{corollary}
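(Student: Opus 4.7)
The plan is to derive two recursive relations expressing $I(M_{2,n})$ and $I(M_{3,n})$ in terms of the known types $I(C_{m,n})$ from Corollary~\ref{CnnCnnnsimple}, and then to handle the small $n$ by direct computation. The key observation I will use is that the replacements in Figures~\ref{mainfigure2} and~\ref{mainfigure3} are not plain grid extensions: each contains a row-permutation crossover (the transposition of the two rows inside $P_{2,4}$, and the swap of rows $1$ and $3$ fixing row $2$ inside $P_{3,6}$) that matches exactly the row-permutation used in the Möbius identification defining $M_{m,n}$.

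For the first recursion, I take two consecutive columns of $C_{2,n}$ (for $n \ge 3$, so that the chosen $P_{2,2}$ is a proper full subgraph) and apply Theorem~\ref{mainm2}. Tracing the external connections through Figure~\ref{mainfigure2}, the resulting graph carries exactly one net row-twist along its cycle, and a vertex-by-vertex matching will show it is isomorphic to $M_{2,n+2}$. This gives
\[
I(M_{2,n+2}) \simpsimeq \Sigma\, I(C_{2,n}) \qquad (n \ge 3).
\]
The analogous argument with a $P_{3,2}$ full subgraph in $C_{3,n}$ for $n \ge 3$ together with Theorem~\ref{mainm3} yields
\[
I(M_{3,n+4}) \simpsimeq \Sigma^{3}\, I(C_{3,n}) \qquad (n \ge 3).
\]

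Combining these recursions with Corollary~\ref{CnnCnnnsimple} and the simplicial identities $\Sigma\,\dia^{d} \simpsimeq \dia^{d+1}$ and $\Sigma(\bigvee_{r} X) \simpsimeq \bigvee_{r} \Sigma X$, the claimed formula for $I(M_{2,n})$ follows for $n \ge 5$ and that for $I(M_{3,n})$ for $n \ge 7$. The remaining small cases I handle directly; for instance, $M_{2,1} \cong K_{2}$, $M_{2,2} \cong K_{4}$, $M_{2,3} \cong K_{3,3}$, and $M_{3,2} \cong K_{3,3}$, whose independence complexes reduce after simple collapses to $\dia^{0}$, $\bigvee_{3}\dia^{0}$, $\dia^{0}$, and $\dia^{0}$, respectively. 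The remaining base cases ($M_{2,4}$ and $M_{3,n}$ for $n \in \{3,4,5,6\}$) will be verified by explicit simplicial arguments.

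The hard part will be verifying the isomorphisms $H \cong M_{2,n+2}$ and $H \cong M_{3,n+4}$ for the graph $H$ produced by the two theorems, which amounts to tracking how the crossovers in Figures~\ref{mainfigure2} and~\ref{mainfigure3} permute the rows between the two ends of the replaced block and then matching each vertex of $H$ with a vertex of the target Möbius graph. A secondary technical point is the direct verification of the remaining base cases, especially for $m=3$, which may require careful ad hoc arguments and fold or collapse lemmas.
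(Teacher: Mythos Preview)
Your plan is correct and follows the same idea as the paper. The key observation---that the crossover built into the replacement of Theorem~\ref{mainm2} (resp.\ Theorem~\ref{mainm3}) is exactly the row transposition defining the M\"obius identification---is precisely what the paper uses. Applying the theorem to $C_{m,n}$ produces $M_{m,n+2}$ (resp.\ $M_{m,n+4}$), and applying it to $M_{m,n}$ produces $C_{m,n+2}$ (resp.\ $C_{m,n+4}$), since two transpositions cancel.

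The only structural difference is that the paper proves Corollaries~\ref{CnnCnnnsimple} and~\ref{MnnMnnnsimple} simultaneously, invoking the recursion in \emph{both} directions,
\[
I(C_{2,n+2}) \simpsimeq \Sigma I(M_{2,n}),\qquad I(M_{2,n+2}) \simpsimeq \Sigma I(C_{2,n}),
\]
and likewise for $m=3$; the two families feed each other, so only the base cases $n\le 2$ (for $m=2$) and $n\le 4$ (for $m=3$) need direct verification. Your one-directional use of the recursion, together with quoting Corollary~\ref{CnnCnnnsimple}, is perfectly valid but forces you to check $M_{2,4}$ and $M_{3,5},M_{3,6}$ by hand as well (and you should also record $M_{3,1}$, which you omitted---it is easy, since the middle vertex carries a loop). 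Your identifications $M_{2,1}\cong K_2$, $M_{2,2}\cong K_4$, $M_{2,3}\cong K_{3,3}$, $M_{3,2}\cong K_{3,3}$ are all correct.
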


Let $C^H_{m,n}$ be the graph which is obtained from $C_{m+1,2n}$ by removing all the edges $(i,j)(i-1,j)$ such that $i-j$ are even (see Figure \ref{CH23figure}).
\begin{figure}[H]
\centering
\begin{tabular}{ccc}
\begin{tikzpicture}
[scale=0.4]
\draw (-0.57,1.75)--(0,2)--(0.87,1.5)--(1.73,2)--(2.6,1.5)--(3.46,2)--(4.33,1.5)--(5.2,2)--(6.06,1.5)--(6.5,1.75);
\draw (-0.57,0.25)--(0,0)--(0.87,0.5)--(1.73,0)--(2.6,0.5)--(3.46,0)--(4.33,0.5)--(5.2,0)--(6.06,0.5)--(6.5,0.25);
\draw (-0.57,-1.25)--(0,-1)--(0.87,-1.5)--(1.73,-1)--(2.6,-1.5)--(3.46,-1)--(4.33,-1.5)--(5.2,-1)--(6.06,-1.5)--(6.5,-1.25);
\draw (0,0)--(0,-1) (0.87,1.5)--(0.87,0.5) (1.73,0)--(1.73,-1) (2.6,1.5)--(2.6,0.5) (3.46,0)--(3.46,-1) (4.33,1.5)--(4.33,0.5) (5.2,-1)--(5.2,0) (6.06,0.5)--(6.06,1.5);
\foreach \x in {(0.87,1.5),(1.73,2),(2.6,1.5),(3.46,2),(4.33,1.5),(0,0),(0.87,0.5),(1.73,0),(2.6,0.5),(3.46,0),(4.33,0.5),(0,-1),(0.87,-1.5),(1.73,-1),(2.6,-1.5),(3.46,-1),(4.33,-1.5),(0,2),(5.2,2),(6.06,1.5),(5.2,0),(6.06,0.5),(5.2,-1),(6.06,-1.5)} {\node at \x  [fill, circle, black, inner sep=1pt] {};};
\node at (0,-1.5) {$a$};
\node at (5.2,-1.5) {$a$};
\end{tikzpicture}
&\hspace{0.5in} &
\begin{tikzpicture}
[scale=0.6]
\foreach \y in {1,2,3} {\foreach \x in {1,2,3,4,5,6,7,8} {\node at (\x,\y) [fill,circle,inner sep=1pt] {};}};
\draw (0.5,1)--(1,1)--(2,1)--(3,1)--(4,1)--(5,1)--(6,1)--(7,1)--(8,1)--(8.5,1);
\draw (0.5,2)--(1,2)--(2,2)--(3,2)--(4,2)--(5,2)--(6,2)--(7,2)--(8,2)--(8.5,2);
\draw (0.5,3)--(1,3)--(2,3)--(3,3)--(4,3)--(5,3)--(6,3)--(7,3)--(8,3)--(8.5,3);
\draw (1,1)--(1,2) (2,2)--(2,3) (3,1)--(3,2) (4,2)--(4,3) (5,1)--(5,2) (6,2)--(6,3) (7,1)--(7,2) (8,2)--(8,3);
\node at (1,1) [below] {$a$};
\node at (7,1) [below] {$a$};
\end{tikzpicture}
\end{tabular}
\caption{$C^H_{2,3}$}
\label{CH23figure}
\end{figure}

\noindent
We determine the simple homotopy type of $I(C^H_{1,n})$.
\begin{corollary}
\label{CHnsimple}
We have
\begin{align*}
&I(C^H_{1,2k+i}) \simpsimeq \left \{
\begin{aligned}
&{\bigvee}_{2} {\dia}^{2k-1} &\quad &(i = 0), \\
&* &\quad &(i = 1) .
\end{aligned} \right. 
\end{align*}
\end{corollary}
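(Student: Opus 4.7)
The plan is to proceed by induction on $n$, establishing the recursive simple-homotopy equivalence $I(C^H_{1,n+2}) \simpsimeq \Sigma^2 I(C^H_{1,n})$ for $n \geq 1$. Together with the base cases this recovers the claim, since $\Sigma^2 {\dia}^m \simpsimeq {\dia}^{m+2}$ and $\Sigma^2 \ast \simpsimeq \ast$, and the formula reduces modulo two in each column.

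For the base case $n=1$, the cyclic identifications in the definition reduce $C^H_{1,1}$ to the path graph $P_4$, whose independence complex is a path (hence a tree) and therefore simple-homotopy equivalent to a point. For $n=2$, the degree-two vertices $(1,2)$ and $(1,4)$ share the common neighborhood $\{(1,1),(1,3)\}$ and are non-adjacent, so they form a twin pair and one of them can be removed by the standard fold (a simple-homotopy equivalence); the symmetric fold in row two removes a further vertex. The resulting graph is the six-cycle $C_6$, and Corollary \ref{Cnsimple} (the case $3k+i$ with $k=2$, $i=0$) gives $I(C_6) \simpsimeq {\dia}^1 \vee {\dia}^1$, matching the claim.

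For the inductive step I would target two adjacent hexagonal cells of $C^H_{1,n+2}$ and aim at two successive applications of Theorem \ref{mainm1}. A direct application is blocked: the degree-two vertices of $C^H_{1,n+2}$ (those in even columns) are pairwise non-adjacent, so no length-four path has all three interior vertices of degree two; similarly, $C^H_{1,n+2}$ contains no full $P_{2,2}$-subgraph (ruling out Theorem \ref{mainm2}) nor a full $P_{3,2}$-subgraph (ruling out Theorem \ref{mainm3}). The plan is therefore to first perform preliminary simple-homotopy simplifications around the targeted cells---folds or deletions of vertices whose link in the independence complex is contractible---that create, in the simplified graph, two length-four paths with degree-two interior. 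Two applications of Theorem \ref{mainm1} then deliver $\Sigma^2$, and the graph left behind should coincide (up to isomorphism) with $C^H_{1,n}$, closing the induction.

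The main obstacle is precisely this inductive step: engineering the preliminary reductions so that the required length-four paths emerge and the terminal graph is exactly $C^H_{1,n}$. Since the hexagonal cycle admits none of the paper's replacement moves as-is, a delicate combination of standard simple-homotopy collapses with Theorem \ref{mainm1} is needed, and the bookkeeping must correctly handle both parities of $n$ uniformly (the even case yielding a non-trivial wedge, the odd case collapsing to a point).
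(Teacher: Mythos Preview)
Your base cases are fine, but the inductive step is not a proof: you explicitly flag it as ``the main obstacle'' and leave the required preliminary reductions undone. A plan to create two length-four paths suitable for Theorem~\ref{mainm1} is not the same as exhibiting them, and in fact you have already noted the structural reason the hexagonal cylinder resists Theorems~\ref{mainm1}--\ref{mainm3} directly. Without the concrete sequence of $\add$/$\del$ moves (or folds) that manufactures those paths and leaves exactly $C^H_{1,n}$ behind, there is nothing to check.

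The paper's argument is different and avoids this difficulty. Rather than aiming for Theorem~\ref{mainm1}, it introduces an auxiliary M\"obius-type family $M^H_{1,n}$ (obtained from $M_{2,2n}$ by deleting the odd vertical edges) and proves a \emph{one-step} alternating recursion
\[
I(C^H_{1,n+1}) \simpsimeq \Sigma\, I(M^H_{1,n}), \qquad I(M^H_{1,n+1}) \simpsimeq \Sigma\, I(C^H_{1,n}),
\]
each step coming from Theorem~\ref{mainm2} (via Remark~\ref{remmainm2}). The key trick you are missing is that one can first \emph{add} the missing rung $(2n+2)\overline{2n+2}$ by a short $\add$/$\del$ sequence, which creates the full $P_{2,4}$ needed for Theorem~\ref{mainm2}; the twist in that theorem's replacement is exactly what swaps the cylindrical and M\"obius identifications. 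Composing the two steps gives your target $I(C^H_{1,n+2}) \simpsimeq \Sigma^2 I(C^H_{1,n})$, but the intermediate M\"obius graph is what makes the argument go through.
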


\section{Preliminaries}
\label{preliminaries}
Let $G$ be a graph. For a subset $W \subset V(G)$, where $V(G)$ is the vertex set of $G$, let $G \setminus W$ be the full subgraph of $G$ whose vertex set is $V(G) \setminus W$. For vertices $u, v$ of $G$, let $G \cup uv$ denote the graph obtained from $G$ by joining $u$ with $v$ by an edge. For an edge $e$ of $G$, let $G-e$ be the graph obtained from $G$ by removing $e$. For a vertex $v$ of $G$, let $N[v]$ be the union of $\{ v \}$ and the set of vertices which are adjacent to $v$. For an edge $e=uv$ of $G$, we set $N[e]=N[u] \cup N[v]$.

We will use the following lemma.
\begin{lemma}
\label{collapselemma}
Let $G$ be a graph.
\begin{enumerate}
\item[(a)] For a vertex $v$ of $G$, suppose that there is an isolated vertex in $G \setminus N[v]$. Then 
$I(G)$ collapses onto $I(G \setminus \{v\})$.
\item[(b)] For an edge $e$ of $G$, suppose that there is an isolated vertex in $G \setminus N[e]$. Then 
$I(G-e)$ collapses onto $I(G)$.
\end{enumerate}
\end{lemma}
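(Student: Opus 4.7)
The plan is to prove both parts uniformly by producing a discrete Morse matching on the ``extra'' simplices, with every matched pair differing by the fixed isolated vertex $w$ provided by the hypothesis. Such a matching is automatically acyclic and is realized as a sequence of elementary collapses performed in order of decreasing dimension, so the substantive content reduces to the combinatorial observation that $\sigma \cup \{w\}$ remains an independent set of the ambient graph whenever $\sigma$ is, for every $\sigma$ in the relevant set.

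For part (a), the simplices I need to collapse away are the independent sets of $G$ that contain $v$. For any such $\sigma$, every vertex of $\sigma \setminus \{v\}$ is non-adjacent to $v$ and so lies in $V(G) \setminus N[v]$; since $w \in V(G) \setminus N[v]$ is isolated in $G \setminus N[v]$, the vertex $w$ is non-adjacent in $G$ to every element of $\sigma$, and $\sigma \cup \{w\}$ is again independent. Matching $\sigma \leftrightarrow \sigma \cup \{w\}$ over the $\sigma$'s containing $v$ but not $w$ is then a perfect pairing on the simplices of $I(G)$ that contain $v$, with critical simplices exactly the members of $I(G \setminus \{v\})$. This is the standard dominated-vertex collapse and yields (a).

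For part (b), the simplices of $I(G-e)$ not in $I(G)$ are precisely the independent sets of $G-e$ containing both endpoints $u$ and $v$ of $e$. For such a $\sigma$, the vertices of $\sigma \setminus \{u,v\}$ are non-adjacent in $G$ to both $u$ and $v$ and hence lie in $V(G) \setminus N[e]$; by the isolated-vertex hypothesis, $w$ is non-adjacent in $G$ to each of $u$, $v$, and every element of $\sigma \setminus \{u,v\}$, so $\sigma \cup \{w\}$ is again independent in $G-e$. Pairing $\sigma \leftrightarrow \sigma \cup \{w\}$ throughout $I(G-e) \setminus I(G)$ therefore exhibits the desired collapse $I(G-e) \searrow I(G)$. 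I do not anticipate any genuine obstacle, as the whole argument is just the standard fold / dominated-vertex technique with $w$ playing the role of the dominating vertex in each part.
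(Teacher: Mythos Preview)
Your proposal is correct and follows essentially the same approach as the paper: both arguments match each ``extra'' simplex $\sigma$ with $\sigma \cup \{w\}$ (where $w$ is the isolated vertex supplied by the hypothesis) and observe that this pairing can be realized as a sequence of elementary collapses. The paper states the matching more tersely without invoking discrete Morse language, while you add the remark about acyclicity and collapsing in decreasing dimension; the substantive content is identical.
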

The following proof is based on Engstr\"{o}m's proof of \cite[Lemma 3.2]{Engs09}. 
\begin{proof}
We first prove (a). Let $u$ be the isolated vertex in $G \setminus N[v]$. We set $D =\{ \sigma \in I(G) \ |\ u \notin \sigma, v \in \sigma \}$ and $U=\{ \tau \in I(G) \ |\ u \in \tau, v \in \tau \}$. For a simplex $\sigma \in D$, $\sigma \cup \{u\}$ is again a simplex of $I(G)$ since any vertex in $\sigma$ is not adjacent to $u$. Conversely, for a simplex $\tau \in U$, $\tau \setminus \{u\}$ is a simplex of $I(G)$. Therefore, we can match $\sigma \in D$ and $\sigma \cup \{u\} \in U$, and remove all the pairs $(\sigma, \sigma \cup \{u\})$ by  elementary collapse steps. The resulted complex is $I(G \setminus \{u\})$.

Similarly, we can prove (b) by replacing $D$ and $U$ with $D' =\{ \sigma \in I(G) \ |\ u \notin \sigma, v \in \sigma, w \in \sigma \}$ and $U'=\{ \tau \in I(G) \ |\ u \in \tau, v \in \tau, w \in \tau \}$, respectively, where $v, w$ are the end points of $e$.
\end{proof}

Now we define three operations.
\begin{enumerate}
\item Suppose that $G$ has vertices $u, v$ such that $u$ is isolated in $G \setminus N[v]$. Then by Lemma \ref{collapselemma}, $I(G)$ collapses onto $I(G \setminus \{v\})$, and we denote this collapse by $\del(v,u)$. In the figures, deleted vertex $v$ is indicated by a circle.
\item Suppose that $G$ has an edge $vw$ and a vertex $u$ such that $u$ is isolated in $G \setminus N[vw]$. Then by Lemma \ref{collapselemma}, $I(G)$ expands into $I(G -e)$, and we denote this expansion by $\del(vw,u)$. In the figures, deleted edge $vw$ is indicated by a dashed line.
\item Suppose that $G$ has vertices $u, v, w$ such that $u$ is isolated in $G \setminus (N[v] \cup N[w])$. Then by Lemma \ref{collapselemma}, $I(G)$ collapses onto $I(G \cup vw)$, and we denote this collapse by $\add(vw,u)$. In the figures, added edge $e$ is indicated by a thick line.
\end{enumerate}

\section{Proofs}
\begin{proof}[Proof of Theorem \ref{mainm1}]
We obtain $I(H) \simpsimeq I(G \sqcup e)$ by the following sequence of operations (see Figure \ref{mainm1figure}):
$$\add(uv,y), \del(ux,z), \del(z,x).$$
\begin{figure}[H]
\centering
\begin{tabular}{cccc}
&
{\begin{tikzpicture}
[scale=0.6]
\foreach \x in {1,2,3,4,5} {\foreach \y in {1} {\node at (\x,\y) [fill,circle,inner sep=1.0pt] {};};};
\draw (1,1)--(5,1);
\node at (1,1) [below] {$u$};
\node at (2,1) [below] {$x$};
\node at (3,1) [below] {$y$};
\node at (4,1) [below] {$z$};
\node at (5,1) [below] {$v$};
\node at (3,2) {$H$};
\draw (0.5,0.7)--(1,1) (0.5,1.3)--(1,1) (5,1)--(5.5,0.7) (5,1)--(5.5,1) (5,1)--(5.5,1.3);
\end{tikzpicture}}
&\raisebox{3mm}{$\rightarrow$}&
{\begin{tikzpicture}
[scale=0.6]
\foreach \x in {1,2,3,4,5} {\foreach \y in {1} {\node at (\x,\y) [fill,circle,inner sep=1.0pt] {};};};
\draw (1,1)--(5,1);
\node at (1,1) [below] {$u$};
\node at (2,1) [below] {$x$};
\node at (3,1) [below] {$y$};
\node at (4,1) [below] {$z$};
\node at (5,1) [below] {$v$};
\node at (3,2) {};
\draw (0.5,0.7)--(1,1) (0.5,1.3)--(1,1) (5,1)--(5.5,0.7) (5,1)--(5.5,1) (5,1)--(5.5,1.3);
\draw [ultra thick] (1,1) to [out=20, in=160] (5,1);
\end{tikzpicture}}
\\
\raisebox{3mm}{$\rightarrow$}&
{\begin{tikzpicture}
[scale=0.6]
\foreach \x in {1,2,3,4,5} {\foreach \y in {1} {\node at (\x,\y) [fill,circle,inner sep=1.0pt] {};};};
\draw [dashed] (1,1)--(2,1);
\draw (2,1)--(5,1);
\node at (1,1) [below] {$u$};
\node at (2,1) [below] {$x$};
\node at (3,1) [below] {$y$};
\node at (4,1) [below] {$z$};
\node at (5,1) [below] {$v$};
\node at (3,2) {};
\draw (0.5,0.7)--(1,1) (0.5,1.3)--(1,1) (5,1)--(5.5,0.7) (5,1)--(5.5,1) (5,1)--(5.5,1.3);
\draw (1,1) to [out=20, in=160] (5,1);
\end{tikzpicture}}
&\raisebox{3mm}{$\rightarrow$}&
{\begin{tikzpicture}
[scale=0.6]
\foreach \x in {1,2,3,5} {\foreach \y in {1} {\node at (\x,\y) [fill,circle,inner sep=1.0pt] {};};};
\draw [dashed] (3,1)--(5,1);
\draw (2,1)--(3,1);
\node at (1,1) [below] {$u$};
\node at (2,1) [below] {$x$};
\node at (3,1) [below] {$y$};
\node at (4,1) [below] {$z$};
\node at (5,1) [below] {$v$};
\node at (3,2) {$G \sqcup e$};
\draw (0.5,0.7)--(1,1) (0.5,1.3)--(1,1) (5,1)--(5.5,0.7) (5,1)--(5.5,1) (5,1)--(5.5,1.3);
\draw (1,1) to [out=20, in=160] (5,1);
\node at (4,1) [circle, draw=black, fill=white, inner sep=1pt] {};
\end{tikzpicture}}
\end{tabular}
\caption{A sequence of operations which induces $I(H) \simpsimeq I(G \sqcup e)$ in the proof of Theorem \ref{mainm1}}
\label{mainm1figure}
\end{figure}
Then, we get the desired conclusion since we have $I(G \sqcup e) \simpsimeq \Sigma I(G)$.
\end{proof}

\begin{proof}[Proof of Corollary \ref{Cnsimple}]
By Theorem \ref{mainm1}, we have
$$I(C_{1,n+3}) \simpsimeq \Sigma I(C_{1,n}) .$$
The base cases are
\begin{align*}
&I(C_{1,1}) = {\dia}^{-1}, & &I(C_{1,2}) = {\dia}^0, & &I(C_{1,3}) = {\dia}^0 \vee {\dia}^0
\end{align*}
(see Figure \ref{mainm1basecases}).
\begin{figure}[H]
\centering
\begin{tabular}{ccccc}
\begin{tikzpicture}
[scale=0.6]
\draw (1,1) to [out=0, in=0] (1,1.5); 
\draw (1,1.5) to [out=180, in=180] (1,1);
\node at (1,1) [fill, circle, black, inner sep=1pt] {};
\node at (1,2) {$C_{1,1}$};
\node at (0,0.5) {};
\end{tikzpicture}& \hspace{0.2in} &
\begin{tikzpicture}
[scale=0.6]
\foreach \x in {(1,1),(2,1)} {\node at \x [fill, circle, black, inner sep=1pt] {};};
\draw (1,1)--(2,1);
\draw (1,1) to [out=-45, in=-135] (2,1);
\node at (1.5,2) {$C_{1,2}$};
\node at (0,0.5) {};
\end{tikzpicture}& \hspace{0.2in} & 
\begin{tikzpicture}
[scale=0.6]
\draw (1,1)--(3,1);
\foreach \x in {(1,1),(2,1),(3,1)} {\node at \x [fill, circle, black, inner sep=1pt] {};};
\draw (1,1) to [out=-45, in=-135] (3,1);
\node at (2,2) {$C_{1,3}$};
\node at (0,0.5) {};
\end{tikzpicture}
\\
\begin{tikzpicture}
[scale=0.6]
\node at (1,1) {$\emptyset$};
\node at (1,2) {$I(C_{1,1})$};
\node at (0,0.5) {};
\end{tikzpicture}& \hspace{0.2in} &
\begin{tikzpicture}
[scale=0.6]
\foreach \x in {(1,1),(2,1)} {\node at \x [fill, circle, black, inner sep=1pt] {};};
\node at (1.5,2) {$I(C_{1,2})$};
\node at (0,0.5) {};
\end{tikzpicture}& \hspace{0.2in} & 
\begin{tikzpicture}
[scale=0.6]
\foreach \x in {(1,1),(2,1),(3,1)} {\node at \x [fill, circle, black, inner sep=1pt] {};};
\node at (2,2) {$I(C_{1,3})$};
\node at (0,0.5) {};
\end{tikzpicture}
\end{tabular}
\caption{$C_{1,1}$, $C_{1,2}$, $C_{1,3}$ and their independence complexes}
\label{mainm1basecases}
\end{figure}
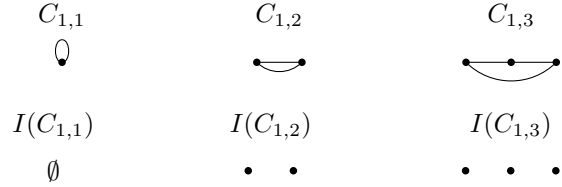
\end{proof}

In the following, we denote the vertices $(1,i), (2,i), (3,i), (4,i)$ of $P_{m,n}$ by $i, \overline{i}, \widehat{i}$, $\widetilde{i}$, respectively.
\begin{proof}[Proof of Theorem \ref{mainm2}]
The desired conclusion is $I(H) \simpsimeq I(G \sqcup e).$ This is obtained by
$$\add(1\overline{4},3), \add(\overline{1}4, \overline{3}), \del(12,\overline{3}), \del(\overline{1}\overline{2},3), \del(3,\overline{2}), \del(\overline{3}, 2)$$
(see Figure \ref{mainm2figure}).
\begin{figure}[H]
\centering
\begin{tabular}{cccccccc}
&
{\begin{tikzpicture}
[scale=0.6]
\foreach \x in {1,2,3,4} {\foreach \y in {1,2} {\node at (\x,\y) [fill,circle,inner sep=1pt] {};};};
\draw (1,1)--(2,1)--(3,1)--(4,1);
\draw (1,2)--(2,2)--(3,2)--(4,2);
\draw (1,1)--(1,2) (2,1)--(2,2) (3,1)--(3,2) (4,1)--(4,2);
\draw (0.5,1.7)--(1,2) (0.5,2.3)--(1,2) (0.5,0.7)--(1,1) (4,1)--(4.5,0.7) (4,1)--(4.5,1) (4,1)--(4.5,1.3) (4,2)--(4.5,1.7);
\end{tikzpicture}}
&\raisebox{3mm}{$\rightarrow$}&
{\begin{tikzpicture}
[scale=0.6]
\foreach \x in {1,2,3,4} {\foreach \y in {1,2} {\node at (\x,\y) [fill,circle,inner sep=1pt] {};};};
\draw (1,1)--(2,1)--(3,1)--(4,1);
\draw (1,2)--(2,2)--(3,2)--(4,2);
\draw (1,1)--(1,2) (2,1)--(2,2) (3,1)--(3,2) (4,1)--(4,2);
\draw (0.5,1.7)--(1,2) (0.5,2.3)--(1,2) (0.5,0.7)--(1,1) (4,1)--(4.5,0.7) (4,1)--(4.5,1) (4,1)--(4.5,1.3) (4,2)--(4.5,1.7);
\draw [ultra thick] (1,1)--(4,2);
\end{tikzpicture}}
&\raisebox{3mm}{$\rightarrow$}&
{\begin{tikzpicture}
[scale=0.6]
\foreach \x in {1,2,3,4} {\foreach \y in {1,2} {\node at (\x,\y) [fill,circle,inner sep=1pt] {};};};
\draw (1,1)--(2,1)--(3,1)--(4,1);
\draw (1,2)--(2,2)--(3,2)--(4,2);
\draw (1,1)--(1,2) (2,1)--(2,2) (3,1)--(3,2) (4,1)--(4,2);
\draw (0.5,1.7)--(1,2) (0.5,2.3)--(1,2) (0.5,0.7)--(1,1) (4,1)--(4.5,0.7) (4,1)--(4.5,1) (4,1)--(4.5,1.3) (4,2)--(4.5,1.7);
\draw (1,1)--(4,2);
\draw [ultra thick] (1,2)--(4,1);
\end{tikzpicture}}
\\
\raisebox{3mm}{$\rightarrow$}&
{\begin{tikzpicture}
[scale=0.6]
\foreach \x in {1,2,3,4} {\foreach \y in {1,2} {\node at (\x,\y) [fill,circle,inner sep=1pt] {};};};
\draw (2,1)--(3,1)--(4,1);
\draw (1,2)--(2,2)--(3,2)--(4,2);
\draw (1,1)--(1,2) (2,1)--(2,2) (3,1)--(3,2) (4,1)--(4,2);
\draw (0.5,1.7)--(1,2) (0.5,2.3)--(1,2) (0.5,0.7)--(1,1) (4,1)--(4.5,0.7) (4,1)--(4.5,1) (4,1)--(4.5,1.3) (4,2)--(4.5,1.7);
\draw (1,1)--(4,2) (1,2)--(4,1);
\draw [dashed] (1,1)--(2,1);
\end{tikzpicture}}
&\raisebox{3mm}{$\rightarrow$}&
{\begin{tikzpicture}
[scale=0.6]
\foreach \x in {1,2,3,4} {\foreach \y in {1,2} {\node at (\x,\y) [fill,circle,inner sep=1pt] {};};};
\draw (2,1)--(3,1)--(4,1);
\draw (2,2)--(3,2)--(4,2);
\draw (1,1)--(1,2) (2,1)--(2,2) (3,1)--(3,2) (4,1)--(4,2);
\draw (0.5,1.7)--(1,2) (0.5,2.3)--(1,2) (0.5,0.7)--(1,1) (4,1)--(4.5,0.7) (4,1)--(4.5,1) (4,1)--(4.5,1.3) (4,2)--(4.5,1.7);
\draw (1,1)--(4,2) (1,2)--(4,1);
\draw [dashed] (1,2)--(2,2);
\end{tikzpicture}}
&\raisebox{3mm}{$\rightarrow$}&
{\begin{tikzpicture}
[scale=0.6]
\foreach \x in {1,2,4} {\foreach \y in {1,2} {\node at (\x,\y) [fill,circle,inner sep=1pt] {};};};
\node at (3,2) [fill,circle,inner sep=1pt] {};
\draw (2,2)--(3,2)--(4,2);
\draw (1,1)--(1,2) (2,1)--(2,2) (4,1)--(4,2);
\draw (0.5,1.7)--(1,2) (0.5,2.3)--(1,2) (0.5,0.7)--(1,1) (4,1)--(4.5,0.7) (4,1)--(4.5,1) (4,1)--(4.5,1.3) (4,2)--(4.5,1.7);
\draw (1,1)--(4,2) (1,2)--(4,1);
\draw [dashed] (2,1)--(4,1) (3,1)--(3,2);
\node at (3,1) [circle, draw=black, fill=white, inner sep=1pt] {};
\end{tikzpicture}}
\\
\raisebox{3mm}{$\rightarrow$}&
{\begin{tikzpicture}
[scale=0.6]
\foreach \x in {1,2,4} {\foreach \y in {1,2} {\node at (\x,\y) [fill,circle,inner sep=1pt] {};};};
\draw (1,1)--(1,2) (2,1)--(2,2) (4,1)--(4,2);
\draw (0.5,1.7)--(1,2) (0.5,2.3)--(1,2) (0.5,0.7)--(1,1) (4,1)--(4.5,0.7) (4,1)--(4.5,1) (4,1)--(4.5,1.3) (4,2)--(4.5,1.7);
\draw (1,1)--(4,2) (1,2)--(4,1);
\draw [dashed] (2,2)--(4,2);
\node at (3,2) [circle, draw=black, fill=white, inner sep=1pt] {};
\end{tikzpicture}}
\end{tabular}
\caption{A sequence of operations which induces $I(H) \simpsimeq I(G \sqcup e)$ in the proof of Theorem \ref{mainm2}}
\label{mainm2figure}
\end{figure}
\end{proof}

\begin{remark}
\label{remmainm2}
The above proof of Theorem \ref{mainm2} indicates that we can apply Theorem \ref{mainm2} even if there is no edge $1\overline{1}$.
\end{remark}

\begin{proof}[Proof of Theorem \ref{mainm3}]
It is sufficient to obtain $I(H) \simpsimeq I(G \sqcup C_{1,8})$ since by Corollary \ref{Cnsimple}, we have
$$I(G \sqcup C_{1,8}) \simpsimeq I(G) * {\dia}^2 \simpsimeq {\Sigma}^3 I(G) .$$
There are four steps to transform $H$ into $G \sqcup C_{1,8}$ without changing the simple homotopy type of the independence complex.
\begin{description}
\item[Step 1] Add three edges, $\overline{1}\overline{6}$, $1\widehat{6}$ and $\widehat{1}6$. This process is performed as follows:
\begin{align*}
&\add(\overline{1}\widehat{4},\widehat{2}), \add(\overline{1}\overline{6},\widehat{5}), \del(\overline{1}\widehat{4},\widehat{2}),
\add(1\widehat{3},\overline{2}), \add(1\overline{4},3),\\
&\add(1\widehat{6},\widehat{4}), \del(1\overline{4},3), \del(1\widehat{3},\overline{2}), \add(\widehat{1}3,\overline{2}), \add(\widehat{1}\overline{4},\widehat{3}),\\
&\add(\widehat{1}6,4), \del(\widehat{1}\overline{4},\widehat{3}), \del(\widehat{1}3,\overline{2})
\end{align*}
(see Figure \ref{mainm3step1}).
\begin{figure}[H]
\centering
\begin{tabular}{cccccc}
&
{\begin{tikzpicture}[scale=0.5]
\foreach \x in {0,1,2,3,4,5} {\foreach \y in {1,2,3} {\node at (\x,\y) [fill,circle,inner sep=1pt] {};};};
\draw (0,1)--(0,2)--(0,3) (1,1)--(1,2)--(1,3) (2,1)--(2,2)--(2,3) (3,1)--(3,2)--(3,3) (4,1)--(4,2)--(4,3) (5,1)--(5,2)--(5,3);
\draw (0,1)--(1,1)--(2,1)--(3,1)--(4,1)--(5,1);
\draw (0,2)--(1,2)--(2,2)--(3,2)--(4,2)--(5,2);
\draw (0,3)--(1,3)--(2,3)--(3,3)--(4,3)--(5,3);
\draw (-0.5,1.3)--(0,1) (-0.5,0.7)--(0,1) (5,1)--(5.5,1.3) (5,1)--(5.5,1) (5,1)--(5.5,0.7);
\draw (-0.5,1.7)--(0,2) (5,2)--(5.5,1.7) (5,2)--(5.5,2.3);
\draw (-0.5,3)--(0,3) (-0.5,2.7)--(0,3) (-0.5,3.3)--(0,3) (5,3)--(5.5,3.3);
\end{tikzpicture}}
&\raisebox{7mm}{$\rightarrow$}&
{\begin{tikzpicture}[scale=0.5]
\foreach \x in {0,1,2,3,4,5} {\foreach \y in {1,2,3} {\node at (\x,\y) [fill,circle,inner sep=1pt] {};};};
\draw (0,1)--(0,2)--(0,3) (1,1)--(1,2)--(1,3) (2,1)--(2,2)--(2,3) (3,1)--(3,2)--(3,3) (4,1)--(4,2)--(4,3) (5,1)--(5,2)--(5,3);
\draw (0,1)--(1,1)--(2,1)--(3,1)--(4,1)--(5,1);
\draw (0,2)--(1,2)--(2,2)--(3,2)--(4,2)--(5,2);
\draw (0,3)--(1,3)--(2,3)--(3,3)--(4,3)--(5,3);
\draw (-0.5,1.3)--(0,1) (-0.5,0.7)--(0,1) (5,1)--(5.5,1.3) (5,1)--(5.5,1) (5,1)--(5.5,0.7);
\draw (-0.5,1.7)--(0,2) (5,2)--(5.5,1.7) (5,2)--(5.5,2.3);
\draw (-0.5,3)--(0,3) (-0.5,2.7)--(0,3) (-0.5,3.3)--(0,3) (5,3)--(5.5,3.3);
\draw [ultra thick] (0,2) to [out=40, in=190] (3,3);
\end{tikzpicture}}
&\raisebox{7mm}{$\rightarrow$}&
{\begin{tikzpicture}[scale=0.5]
\foreach \x in {0,1,2,3,4,5} {\foreach \y in {1,2,3} {\node at (\x,\y) [fill,circle,inner sep=1pt] {};};};
\draw (0,1)--(0,2)--(0,3) (1,1)--(1,2)--(1,3) (2,1)--(2,2)--(2,3) (3,1)--(3,2)--(3,3) (4,1)--(4,2)--(4,3) (5,1)--(5,2)--(5,3);
\draw (0,1)--(1,1)--(2,1)--(3,1)--(4,1)--(5,1);
\draw (0,2)--(1,2)--(2,2)--(3,2)--(4,2)--(5,2);
\draw (0,3)--(1,3)--(2,3)--(3,3)--(4,3)--(5,3);
\draw (-0.5,1.3)--(0,1) (-0.5,0.7)--(0,1) (5,1)--(5.5,1.3) (5,1)--(5.5,1) (5,1)--(5.5,0.7);
\draw (-0.5,1.7)--(0,2) (5,2)--(5.5,1.7) (5,2)--(5.5,2.3);
\draw (-0.5,3)--(0,3) (-0.5,2.7)--(0,3) (-0.5,3.3)--(0,3) (5,3)--(5.5,3.3);
\draw (0,2) to [out=40, in=190] (3,3);
\draw [ultra thick] (0,2) to [out=20, in=160] (5,2);
\end{tikzpicture}}
\\
\raisebox{7mm}{$\rightarrow$}&
{\begin{tikzpicture}[scale=0.5]
\foreach \x in {0,1,2,3,4,5} {\foreach \y in {1,2,3} {\node at (\x,\y) [fill,circle,inner sep=1pt] {};};};
\draw (0,1)--(0,2)--(0,3) (1,1)--(1,2)--(1,3) (2,1)--(2,2)--(2,3) (3,1)--(3,2)--(3,3) (4,1)--(4,2)--(4,3) (5,1)--(5,2)--(5,3);
\draw (0,1)--(1,1)--(2,1)--(3,1)--(4,1)--(5,1);
\draw (0,2)--(1,2)--(2,2)--(3,2)--(4,2)--(5,2);
\draw (0,3)--(1,3)--(2,3)--(3,3)--(4,3)--(5,3);
\draw (-0.5,1.3)--(0,1) (-0.5,0.7)--(0,1) (5,1)--(5.5,1.3) (5,1)--(5.5,1) (5,1)--(5.5,0.7);
\draw (-0.5,1.7)--(0,2) (5,2)--(5.5,1.7) (5,2)--(5.5,2.3);
\draw (-0.5,3)--(0,3) (-0.5,2.7)--(0,3) (-0.5,3.3)--(0,3) (5,3)--(5.5,3.3);
\draw (0,2) to [out=20, in=160] (5,2);
\draw [dashed] (0,2) to [out=40, in=190] (3,3);
\end{tikzpicture}}
&\raisebox{7mm}{$\rightarrow$}&
{\begin{tikzpicture}[scale=0.5]
\foreach \x in {0,1,2,3,4,5} {\foreach \y in {1,2,3} {\node at (\x,\y) [fill,circle,inner sep=1pt] {};};};
\draw (0,1)--(0,2)--(0,3) (1,1)--(1,2)--(1,3) (2,1)--(2,2)--(2,3) (3,1)--(3,2)--(3,3) (4,1)--(4,2)--(4,3) (5,1)--(5,2)--(5,3);
\draw (0,1)--(1,1)--(2,1)--(3,1)--(4,1)--(5,1);
\draw (0,2)--(1,2)--(2,2)--(3,2)--(4,2)--(5,2);
\draw (0,3)--(1,3)--(2,3)--(3,3)--(4,3)--(5,3);
\draw (-0.5,1.3)--(0,1) (-0.5,0.7)--(0,1) (5,1)--(5.5,1.3) (5,1)--(5.5,1) (5,1)--(5.5,0.7);
\draw (-0.5,1.7)--(0,2) (5,2)--(5.5,1.7) (5,2)--(5.5,2.3);
\draw (-0.5,3)--(0,3) (-0.5,2.7)--(0,3) (-0.5,3.3)--(0,3) (5,3)--(5.5,3.3);
\draw (0,2) to [out=20, in=160] (5,2);
\draw [ultra thick] (0,1) to [out=70, in=200] (2,3);
\end{tikzpicture}}
&\raisebox{7mm}{$\rightarrow$}&
{\begin{tikzpicture}[scale=0.5]
\foreach \x in {0,1,2,3,4,5} {\foreach \y in {1,2,3} {\node at (\x,\y) [fill,circle,inner sep=1pt] {};};};
\draw (0,1)--(0,2)--(0,3) (1,1)--(1,2)--(1,3) (2,1)--(2,2)--(2,3) (3,1)--(3,2)--(3,3) (4,1)--(4,2)--(4,3) (5,1)--(5,2)--(5,3);
\draw (0,1)--(1,1)--(2,1)--(3,1)--(4,1)--(5,1);
\draw (0,2)--(1,2)--(2,2)--(3,2)--(4,2)--(5,2);
\draw (0,3)--(1,3)--(2,3)--(3,3)--(4,3)--(5,3);
\draw (-0.5,1.3)--(0,1) (-0.5,0.7)--(0,1) (5,1)--(5.5,1.3) (5,1)--(5.5,1) (5,1)--(5.5,0.7);
\draw (-0.5,1.7)--(0,2) (5,2)--(5.5,1.7) (5,2)--(5.5,2.3);
\draw (-0.5,3)--(0,3) (-0.5,2.7)--(0,3) (-0.5,3.3)--(0,3) (5,3)--(5.5,3.3);
\draw (0,2) to [out=20, in=160] (5,2);
\draw (0,1) to [out=70, in=200] (2,3);
\draw [ultra thick] (0,1) to [out=10, in=220] (3,2);
\end{tikzpicture}}
\\
\raisebox{7mm}{$\rightarrow$}&
{\begin{tikzpicture}[scale=0.5]
\foreach \x in {0,1,2,3,4,5} {\foreach \y in {1,2,3} {\node at (\x,\y) [fill,circle,inner sep=1pt] {};};};
\draw (0,1)--(0,2)--(0,3) (1,1)--(1,2)--(1,3) (2,1)--(2,2)--(2,3) (3,1)--(3,2)--(3,3) (4,1)--(4,2)--(4,3) (5,1)--(5,2)--(5,3);
\draw (0,1)--(1,1)--(2,1)--(3,1)--(4,1)--(5,1);
\draw (0,2)--(1,2)--(2,2)--(3,2)--(4,2)--(5,2);
\draw (0,3)--(1,3)--(2,3)--(3,3)--(4,3)--(5,3);
\draw (-0.5,1.3)--(0,1) (-0.5,0.7)--(0,1) (5,1)--(5.5,1.3) (5,1)--(5.5,1) (5,1)--(5.5,0.7);
\draw (-0.5,1.7)--(0,2) (5,2)--(5.5,1.7) (5,2)--(5.5,2.3);
\draw (-0.5,3)--(0,3) (-0.5,2.7)--(0,3) (-0.5,3.3)--(0,3) (5,3)--(5.5,3.3);
\draw (0,2) to [out=20, in=160] (5,2);
\draw [ultra thick] (0,1)--(5,3);
\draw (0,1) to [out=70, in=200] (2,3);
\draw (0,1) to [out=10, in=220] (3,2);
\end{tikzpicture}}
&\raisebox{7mm}{$\rightarrow$}&
{\begin{tikzpicture}[scale=0.5]
\foreach \x in {0,1,2,3,4,5} {\foreach \y in {1,2,3} {\node at (\x,\y) [fill,circle,inner sep=1pt] {};};};
\draw (0,1)--(0,2)--(0,3) (1,1)--(1,2)--(1,3) (2,1)--(2,2)--(2,3) (3,1)--(3,2)--(3,3) (4,1)--(4,2)--(4,3) (5,1)--(5,2)--(5,3);
\draw (0,1)--(1,1)--(2,1)--(3,1)--(4,1)--(5,1);
\draw (0,2)--(1,2)--(2,2)--(3,2)--(4,2)--(5,2);
\draw (0,3)--(1,3)--(2,3)--(3,3)--(4,3)--(5,3);
\draw (-0.5,1.3)--(0,1) (-0.5,0.7)--(0,1) (5,1)--(5.5,1.3) (5,1)--(5.5,1) (5,1)--(5.5,0.7);
\draw (-0.5,1.7)--(0,2) (5,2)--(5.5,1.7) (5,2)--(5.5,2.3);
\draw (-0.5,3)--(0,3) (-0.5,2.7)--(0,3) (-0.5,3.3)--(0,3) (5,3)--(5.5,3.3);
\draw (0,2) to [out=20, in=160] (5,2);
\draw (0,1)--(5,3);
\draw (0,1) to [out=70, in=200] (2,3);
\draw [dashed] (0,1) to [out=10, in=220] (3,2);
\end{tikzpicture}}
&\raisebox{7mm}{$\rightarrow$}&
{\begin{tikzpicture}[scale=0.5]
\foreach \x in {0,1,2,3,4,5} {\foreach \y in {1,2,3} {\node at (\x,\y) [fill,circle,inner sep=1pt] {};};};
\draw (0,1)--(0,2)--(0,3) (1,1)--(1,2)--(1,3) (2,1)--(2,2)--(2,3) (3,1)--(3,2)--(3,3) (4,1)--(4,2)--(4,3) (5,1)--(5,2)--(5,3);
\draw (0,1)--(1,1)--(2,1)--(3,1)--(4,1)--(5,1);
\draw (0,2)--(1,2)--(2,2)--(3,2)--(4,2)--(5,2);
\draw (0,3)--(1,3)--(2,3)--(3,3)--(4,3)--(5,3);
\draw (-0.5,1.3)--(0,1) (-0.5,0.7)--(0,1) (5,1)--(5.5,1.3) (5,1)--(5.5,1) (5,1)--(5.5,0.7);
\draw (-0.5,1.7)--(0,2) (5,2)--(5.5,1.7) (5,2)--(5.5,2.3);
\draw (-0.5,3)--(0,3) (-0.5,2.7)--(0,3) (-0.5,3.3)--(0,3) (5,3)--(5.5,3.3);
\draw (0,2) to [out=20, in=160] (5,2);
\draw (0,1)--(5,3);
\draw [dashed] (0,1) to [out=70, in=200] (2,3);
\end{tikzpicture}}
\\
\raisebox{7mm}{$\rightarrow$}&
{\begin{tikzpicture}[scale=0.5]
\foreach \x in {0,1,2,3,4,5} {\foreach \y in {1,2,3} {\node at (\x,\y) [fill,circle,inner sep=1pt] {};};};
\draw (0,1)--(0,2)--(0,3) (1,1)--(1,2)--(1,3) (2,1)--(2,2)--(2,3) (3,1)--(3,2)--(3,3) (4,1)--(4,2)--(4,3) (5,1)--(5,2)--(5,3);
\draw (0,1)--(1,1)--(2,1)--(3,1)--(4,1)--(5,1);
\draw (0,2)--(1,2)--(2,2)--(3,2)--(4,2)--(5,2);
\draw (0,3)--(1,3)--(2,3)--(3,3)--(4,3)--(5,3);
\draw (-0.5,1.3)--(0,1) (-0.5,0.7)--(0,1) (5,1)--(5.5,1.3) (5,1)--(5.5,1) (5,1)--(5.5,0.7);
\draw (-0.5,1.7)--(0,2) (5,2)--(5.5,1.7) (5,2)--(5.5,2.3);
\draw (-0.5,3)--(0,3) (-0.5,2.7)--(0,3) (-0.5,3.3)--(0,3) (5,3)--(5.5,3.3);
\draw (0,2) to [out=20, in=160] (5,2);
\draw (0,1)--(5,3);
\draw [ultra thick] (0,3) to [out=290, in=160] (2,1);
\end{tikzpicture}}
&\raisebox{7mm}{$\rightarrow$}&
{\begin{tikzpicture}[scale=0.5]
\foreach \x in {0,1,2,3,4,5} {\foreach \y in {1,2,3} {\node at (\x,\y) [fill,circle,inner sep=1pt] {};};};
\draw (0,1)--(0,2)--(0,3) (1,1)--(1,2)--(1,3) (2,1)--(2,2)--(2,3) (3,1)--(3,2)--(3,3) (4,1)--(4,2)--(4,3) (5,1)--(5,2)--(5,3);
\draw (0,1)--(1,1)--(2,1)--(3,1)--(4,1)--(5,1);
\draw (0,2)--(1,2)--(2,2)--(3,2)--(4,2)--(5,2);
\draw (0,3)--(1,3)--(2,3)--(3,3)--(4,3)--(5,3);
\draw (-0.5,1.3)--(0,1) (-0.5,0.7)--(0,1) (5,1)--(5.5,1.3) (5,1)--(5.5,1) (5,1)--(5.5,0.7);
\draw (-0.5,1.7)--(0,2) (5,2)--(5.5,1.7) (5,2)--(5.5,2.3);
\draw (-0.5,3)--(0,3) (-0.5,2.7)--(0,3) (-0.5,3.3)--(0,3) (5,3)--(5.5,3.3);
\draw (0,2) to [out=20, in=160] (5,2);
\draw (0,1)--(5,3);
\draw (0,3) to [out=290, in=160] (2,1);
\draw [ultra thick] (0,3) to [out=350, in=130] (3,2);
\end{tikzpicture}}
&\raisebox{7mm}{$\rightarrow$}&
{\begin{tikzpicture}[scale=0.5]
\foreach \x in {0,1,2,3,4,5} {\foreach \y in {1,2,3} {\node at (\x,\y) [fill,circle,inner sep=1pt] {};};};
\draw (0,1)--(0,2)--(0,3) (1,1)--(1,2)--(1,3) (2,1)--(2,2)--(2,3) (3,1)--(3,2)--(3,3) (4,1)--(4,2)--(4,3) (5,1)--(5,2)--(5,3);
\draw (0,1)--(1,1)--(2,1)--(3,1)--(4,1)--(5,1);
\draw (0,2)--(1,2)--(2,2)--(3,2)--(4,2)--(5,2);
\draw (0,3)--(1,3)--(2,3)--(3,3)--(4,3)--(5,3);
\draw (-0.5,1.3)--(0,1) (-0.5,0.7)--(0,1) (5,1)--(5.5,1.3) (5,1)--(5.5,1) (5,1)--(5.5,0.7);
\draw (-0.5,1.7)--(0,2) (5,2)--(5.5,1.7) (5,2)--(5.5,2.3);
\draw (-0.5,3)--(0,3) (-0.5,2.7)--(0,3) (-0.5,3.3)--(0,3) (5,3)--(5.5,3.3);
\draw (0,2) to [out=20, in=160] (5,2);
\draw (0,1)--(5,3);
\draw [ultra thick] (0,3)--(5,1);
\draw (0,3) to [out=290, in=160] (2,1);
\draw (0,3) to [out=350, in=130] (3,2);
\end{tikzpicture}}
\\
\raisebox{7mm}{$\rightarrow$}&
{\begin{tikzpicture}[scale=0.5]
\foreach \x in {0,1,2,3,4,5} {\foreach \y in {1,2,3} {\node at (\x,\y) [fill,circle,inner sep=1pt] {};};};
\draw (0,1)--(0,2)--(0,3) (1,1)--(1,2)--(1,3) (2,1)--(2,2)--(2,3) (3,1)--(3,2)--(3,3) (4,1)--(4,2)--(4,3) (5,1)--(5,2)--(5,3);
\draw (0,1)--(1,1)--(2,1)--(3,1)--(4,1)--(5,1);
\draw (0,2)--(1,2)--(2,2)--(3,2)--(4,2)--(5,2);
\draw (0,3)--(1,3)--(2,3)--(3,3)--(4,3)--(5,3);
\draw (-0.5,1.3)--(0,1) (-0.5,0.7)--(0,1) (5,1)--(5.5,1.3) (5,1)--(5.5,1) (5,1)--(5.5,0.7);
\draw (-0.5,1.7)--(0,2) (5,2)--(5.5,1.7) (5,2)--(5.5,2.3);
\draw (-0.5,3)--(0,3) (-0.5,2.7)--(0,3) (-0.5,3.3)--(0,3) (5,3)--(5.5,3.3);
\draw (0,2) to [out=20, in=160] (5,2);
\draw (0,1)--(5,3) (0,3)--(5,1);
\draw (0,3) to [out=290, in=160] (2,1);
\draw [dashed] (0,3) to [out=350, in=130] (3,2);
\end{tikzpicture}}
&\raisebox{7mm}{$\rightarrow$}&
{\begin{tikzpicture}[scale=0.5]
\foreach \x in {0,1,2,3,4,5} {\foreach \y in {1,2,3} {\node at (\x,\y) [fill,circle,inner sep=1pt] {};};};
\draw (0,1)--(0,2)--(0,3) (1,1)--(1,2)--(1,3) (2,1)--(2,2)--(2,3) (3,1)--(3,2)--(3,3) (4,1)--(4,2)--(4,3) (5,1)--(5,2)--(5,3);
\draw (0,1)--(1,1)--(2,1)--(3,1)--(4,1)--(5,1);
\draw (0,2)--(1,2)--(2,2)--(3,2)--(4,2)--(5,2);
\draw (0,3)--(1,3)--(2,3)--(3,3)--(4,3)--(5,3);
\draw (-0.5,1.3)--(0,1) (-0.5,0.7)--(0,1) (5,1)--(5.5,1.3) (5,1)--(5.5,1) (5,1)--(5.5,0.7);
\draw (-0.5,1.7)--(0,2) (5,2)--(5.5,1.7) (5,2)--(5.5,2.3);
\draw (-0.5,3)--(0,3) (-0.5,2.7)--(0,3) (-0.5,3.3)--(0,3) (5,3)--(5.5,3.3);
\draw (0,2) to [out=20, in=160] (5,2);
\draw (0,1)--(5,3) (0,3)--(5,1);
\draw [dashed] (0,3) to [out=290, in=160] (2,1);
\end{tikzpicture}}
\end{tabular}
\caption{A sequence of operations which induces $I(H) \simpsimeq I(G \sqcup C_{1,8})$ in the proof of Theorem \ref{mainm3} (Step 1)}
\label{mainm3step1}
\end{figure}

\item[Step 2] Delete three edges, $12$, $\widehat{1}\widehat{2}$ and $\overline{1}\overline{2}$. This process is performed as follows:
\begin{align*}
&\add(2\widehat{4},\overline{3}), \add(2\overline{5},4), \del(12,\widehat{5}), \del(2\widehat{4},\overline{3}), \add(\widehat{2}4,\overline{3}),\\
&\add(\widehat{2}\overline{5},\widehat{4}), \del(\widehat{1}\widehat{2},5), \del(\widehat{2}4,\overline{3}), \add(\overline{2}\widehat{5},\widehat{3}), \add(\overline{2}\overline{4},3),\\
&\add(\overline{2}5,3), \del(\overline{1}\overline{2},\overline{5})
\end{align*}
(see Figure \ref{mainm3step2}).
\begin{figure}[H]
\centering
\begin{tabular}{cccccc}
\raisebox{7mm}{$\rightarrow$}&
{\begin{tikzpicture}[scale=0.5]
\foreach \x in {0,1,2,3,4,5} {\foreach \y in {1,2,3} {\node at (\x,\y) [fill,circle,inner sep=1pt] {};};};
\draw (0,1)--(0,2)--(0,3) (1,1)--(1,2)--(1,3) (2,1)--(2,2)--(2,3) (3,1)--(3,2)--(3,3) (4,1)--(4,2)--(4,3) (5,1)--(5,2)--(5,3);
\draw (0,1)--(1,1)--(2,1)--(3,1)--(4,1)--(5,1);
\draw (0,2)--(1,2)--(2,2)--(3,2)--(4,2)--(5,2);
\draw (0,3)--(1,3)--(2,3)--(3,3)--(4,3)--(5,3);
\draw (-0.5,1.3)--(0,1) (-0.5,0.7)--(0,1) (5,1)--(5.5,1.3) (5,1)--(5.5,1) (5,1)--(5.5,0.7);
\draw (-0.5,1.7)--(0,2) (5,2)--(5.5,1.7) (5,2)--(5.5,2.3);
\draw (-0.5,3)--(0,3) (-0.5,2.7)--(0,3) (-0.5,3.3)--(0,3) (5,3)--(5.5,3.3);
\draw (0,2) to [out=20, in=160] (5,2);
\draw (0,1)--(5,3) (0,3)--(5,1);
\draw [ultra thick] (1,1) to [out=70, in=200] (3,3);
\end{tikzpicture}}
&\raisebox{7mm}{$\rightarrow$}&
{\begin{tikzpicture}[scale=0.5]
\foreach \x in {0,1,2,3,4,5} {\foreach \y in {1,2,3} {\node at (\x,\y) [fill,circle,inner sep=1pt] {};};};
\draw (0,1)--(0,2)--(0,3) (1,1)--(1,2)--(1,3) (2,1)--(2,2)--(2,3) (3,1)--(3,2)--(3,3) (4,1)--(4,2)--(4,3) (5,1)--(5,2)--(5,3);
\draw (0,1)--(1,1)--(2,1)--(3,1)--(4,1)--(5,1);
\draw (0,2)--(1,2)--(2,2)--(3,2)--(4,2)--(5,2);
\draw (0,3)--(1,3)--(2,3)--(3,3)--(4,3)--(5,3);
\draw (-0.5,1.3)--(0,1) (-0.5,0.7)--(0,1) (5,1)--(5.5,1.3) (5,1)--(5.5,1) (5,1)--(5.5,0.7);
\draw (-0.5,1.7)--(0,2) (5,2)--(5.5,1.7) (5,2)--(5.5,2.3);
\draw (-0.5,3)--(0,3) (-0.5,2.7)--(0,3) (-0.5,3.3)--(0,3) (5,3)--(5.5,3.3);
\draw (0,2) to [out=20, in=160] (5,2);
\draw (0,1)--(5,3) (0,3)--(5,1);
\draw (1,1) to [out=70, in=200] (3,3);
\draw [ultra thick] (1,1) to [out=10,in=220] (4,2);
\end{tikzpicture}}
&\raisebox{7mm}{$\rightarrow$}&
{\begin{tikzpicture}[scale=0.5]
\foreach \x in {0,1,2,3,4,5} {\foreach \y in {1,2,3} {\node at (\x,\y) [fill,circle,inner sep=1pt] {};};};
\draw (0,1)--(0,2)--(0,3) (1,1)--(1,2)--(1,3) (2,1)--(2,2)--(2,3) (3,1)--(3,2)--(3,3) (4,1)--(4,2)--(4,3) (5,1)--(5,2)--(5,3);
\draw (1,1)--(2,1)--(3,1)--(4,1)--(5,1);
\draw (0,2)--(1,2)--(2,2)--(3,2)--(4,2)--(5,2);
\draw (0,3)--(1,3)--(2,3)--(3,3)--(4,3)--(5,3);
\draw (-0.5,1.3)--(0,1) (-0.5,0.7)--(0,1) (5,1)--(5.5,1.3) (5,1)--(5.5,1) (5,1)--(5.5,0.7);
\draw (-0.5,1.7)--(0,2) (5,2)--(5.5,1.7) (5,2)--(5.5,2.3);
\draw (-0.5,3)--(0,3) (-0.5,2.7)--(0,3) (-0.5,3.3)--(0,3) (5,3)--(5.5,3.3);
\draw (0,2) to [out=20, in=160] (5,2);
\draw (0,1)--(5,3) (0,3)--(5,1);
\draw (1,1) to [out=70, in=200] (3,3);
\draw (1,1) to [out=10,in=220] (4,2);
\draw [dashed] (0,1)--(1,1);
\end{tikzpicture}}
\\
\raisebox{7mm}{$\rightarrow$}&
{\begin{tikzpicture}[scale=0.5]
\foreach \x in {0,1,2,3,4,5} {\foreach \y in {1,2,3} {\node at (\x,\y) [fill,circle,inner sep=1pt] {};};};
\draw (0,1)--(0,2)--(0,3) (1,1)--(1,2)--(1,3) (2,1)--(2,2)--(2,3) (3,1)--(3,2)--(3,3) (4,1)--(4,2)--(4,3) (5,1)--(5,2)--(5,3);
\draw (0,1)--(0,2)--(0,3) (1,1)--(1,2)--(1,3) (2,1)--(2,2)--(2,3) (3,1)--(3,2)--(3,3) (4,1)--(4,2)--(4,3) (5,1)--(5,2)--(5,3);
\draw (1,1)--(2,1)--(3,1)--(4,1)--(5,1);
\draw (0,2)--(1,2)--(2,2)--(3,2)--(4,2)--(5,2);
\draw (0,3)--(1,3)--(2,3)--(3,3)--(4,3)--(5,3);
\draw (-0.5,1.3)--(0,1) (-0.5,0.7)--(0,1) (5,1)--(5.5,1.3) (5,1)--(5.5,1) (5,1)--(5.5,0.7);
\draw (-0.5,1.7)--(0,2) (5,2)--(5.5,1.7) (5,2)--(5.5,2.3);
\draw (-0.5,3)--(0,3) (-0.5,2.7)--(0,3) (-0.5,3.3)--(0,3) (5,3)--(5.5,3.3);
\draw (0,2) to [out=20, in=160] (5,2);
\draw (0,1)--(5,3) (0,3)--(5,1);
\draw (1,1) to [out=10,in=220] (4,2);
\draw [dashed] (1,1) to [out=70, in=200] (3,3);
\end{tikzpicture}}
&\raisebox{7mm}{$\rightarrow$}&
{\begin{tikzpicture}[scale=0.5]
\foreach \x in {0,1,2,3,4,5} {\foreach \y in {1,2,3} {\node at (\x,\y) [fill,circle,inner sep=1pt] {};};};
\draw (0,1)--(0,2)--(0,3) (1,1)--(1,2)--(1,3) (2,1)--(2,2)--(2,3) (3,1)--(3,2)--(3,3) (4,1)--(4,2)--(4,3) (5,1)--(5,2)--(5,3);
\draw (1,1)--(2,1)--(3,1)--(4,1)--(5,1);
\draw (0,2)--(1,2)--(2,2)--(3,2)--(4,2)--(5,2);
\draw (0,3)--(1,3)--(2,3)--(3,3)--(4,3)--(5,3);
\draw (-0.5,1.3)--(0,1) (-0.5,0.7)--(0,1) (5,1)--(5.5,1.3) (5,1)--(5.5,1) (5,1)--(5.5,0.7);
\draw (-0.5,1.7)--(0,2) (5,2)--(5.5,1.7) (5,2)--(5.5,2.3);
\draw (-0.5,3)--(0,3) (-0.5,2.7)--(0,3) (-0.5,3.3)--(0,3) (5,3)--(5.5,3.3);
\draw (0,2) to [out=20, in=160] (5,2);
\draw (0,1)--(5,3) (0,3)--(5,1);
\draw (1,1) to [out=10,in=220] (4,2);
\draw [ultra thick] (1,3) to [out=340, in=110] (3,1);
\end{tikzpicture}}
&\raisebox{7mm}{$\rightarrow$}&
{\begin{tikzpicture}[scale=0.5]
\foreach \x in {0,1,2,3,4,5} {\foreach \y in {1,2,3} {\node at (\x,\y) [fill,circle,inner sep=1pt] {};};};
\draw (0,1)--(0,2)--(0,3) (1,1)--(1,2)--(1,3) (2,1)--(2,2)--(2,3) (3,1)--(3,2)--(3,3) (4,1)--(4,2)--(4,3) (5,1)--(5,2)--(5,3);
\draw (1,1)--(2,1)--(3,1)--(4,1)--(5,1);
\draw (0,2)--(1,2)--(2,2)--(3,2)--(4,2)--(5,2);
\draw (0,3)--(1,3)--(2,3)--(3,3)--(4,3)--(5,3);
\draw (-0.5,1.3)--(0,1) (-0.5,0.7)--(0,1) (5,1)--(5.5,1.3) (5,1)--(5.5,1) (5,1)--(5.5,0.7);
\draw (-0.5,1.7)--(0,2) (5,2)--(5.5,1.7) (5,2)--(5.5,2.3);
\draw (-0.5,3)--(0,3) (-0.5,2.7)--(0,3) (-0.5,3.3)--(0,3) (5,3)--(5.5,3.3);
\draw (0,2) to [out=20, in=160] (5,2);
\draw (0,1)--(5,3) (0,3)--(5,1);
\draw (1,1) to [out=10,in=220] (4,2);
\draw (1,3) to [out=340, in=110] (3,1);
\draw [ultra thick] (1,3) to [out=350, in=140] (4,2);
\end{tikzpicture}}
\\
\raisebox{7mm}{$\rightarrow$}&
{\begin{tikzpicture}[scale=0.5]
\foreach \x in {0,1,2,3,4,5} {\foreach \y in {1,2,3} {\node at (\x,\y) [fill,circle,inner sep=1pt] {};};};
\draw (0,1)--(0,2)--(0,3) (1,1)--(1,2)--(1,3) (2,1)--(2,2)--(2,3) (3,1)--(3,2)--(3,3) (4,1)--(4,2)--(4,3) (5,1)--(5,2)--(5,3);
\draw (1,1)--(2,1)--(3,1)--(4,1)--(5,1);
\draw (0,2)--(1,2)--(2,2)--(3,2)--(4,2)--(5,2);
\draw (1,3)--(2,3)--(3,3)--(4,3)--(5,3);
\draw (-0.5,1.3)--(0,1) (-0.5,0.7)--(0,1) (5,1)--(5.5,1.3) (5,1)--(5.5,1) (5,1)--(5.5,0.7);
\draw (-0.5,1.7)--(0,2) (5,2)--(5.5,1.7) (5,2)--(5.5,2.3);
\draw (-0.5,3)--(0,3) (-0.5,2.7)--(0,3) (-0.5,3.3)--(0,3) (5,3)--(5.5,3.3);
\draw (0,2) to [out=20, in=160] (5,2);
\draw (0,1)--(5,3) (0,3)--(5,1);
\draw (1,1) to [out=10,in=220] (4,2);
\draw (1,3) to [out=340, in=110] (3,1);
\draw (1,3) to [out=350, in=140] (4,2);
\draw [dashed] (0,3)--(1,3);
\end{tikzpicture}}
&\raisebox{7mm}{$\rightarrow$}&
{\begin{tikzpicture}[scale=0.5]
\foreach \x in {0,1,2,3,4,5} {\foreach \y in {1,2,3} {\node at (\x,\y) [fill,circle,inner sep=1pt] {};};};
\draw (0,1)--(0,2)--(0,3) (1,1)--(1,2)--(1,3) (2,1)--(2,2)--(2,3) (3,1)--(3,2)--(3,3) (4,1)--(4,2)--(4,3) (5,1)--(5,2)--(5,3);
\draw (1,1)--(2,1)--(3,1)--(4,1)--(5,1);
\draw (0,2)--(1,2)--(2,2)--(3,2)--(4,2)--(5,2);
\draw (1,3)--(2,3)--(3,3)--(4,3)--(5,3);
\draw (-0.5,1.3)--(0,1) (-0.5,0.7)--(0,1) (5,1)--(5.5,1.3) (5,1)--(5.5,1) (5,1)--(5.5,0.7);
\draw (-0.5,1.7)--(0,2) (5,2)--(5.5,1.7) (5,2)--(5.5,2.3);
\draw (-0.5,3)--(0,3) (-0.5,2.7)--(0,3) (-0.5,3.3)--(0,3) (5,3)--(5.5,3.3);
\draw (0,2) to [out=20, in=160] (5,2);
\draw (0,1)--(5,3) (0,3)--(5,1);
\draw (1,1) to [out=10,in=220] (4,2);
\draw (1,3) to [out=350, in=140] (4,2);
\draw [dashed] (1,3) to [out=340, in=110] (3,1);
\end{tikzpicture}}
&\raisebox{7mm}{$\rightarrow$}&
{\begin{tikzpicture}[scale=0.5]
\foreach \x in {0,1,2,3,4,5} {\foreach \y in {1,2,3} {\node at (\x,\y) [fill,circle,inner sep=1pt] {};};};
\draw (0,1)--(0,2)--(0,3) (1,1)--(1,2)--(1,3) (2,1)--(2,2)--(2,3) (3,1)--(3,2)--(3,3) (4,1)--(4,2)--(4,3) (5,1)--(5,2)--(5,3);
\draw (1,1)--(2,1)--(3,1)--(4,1)--(5,1);
\draw (0,2)--(1,2)--(2,2)--(3,2)--(4,2)--(5,2);
\draw (1,3)--(2,3)--(3,3)--(4,3)--(5,3);
\draw (-0.5,1.3)--(0,1) (-0.5,0.7)--(0,1) (5,1)--(5.5,1.3) (5,1)--(5.5,1) (5,1)--(5.5,0.7);
\draw (-0.5,1.7)--(0,2) (5,2)--(5.5,1.7) (5,2)--(5.5,2.3);
\draw (-0.5,3)--(0,3) (-0.5,2.7)--(0,3) (-0.5,3.3)--(0,3) (5,3)--(5.5,3.3);
\draw (0,2) to [out=20, in=160] (5,2);
\draw (0,1)--(5,3) (0,3)--(5,1);
\draw (1,1) to [out=10,in=220] (4,2);
\draw (1,3) to [out=350, in=140] (4,2);
\draw [ultra thick] (1,2) to [out=40, in=190] (4,3);
\end{tikzpicture}}
\\
\raisebox{7mm}{$\rightarrow$}&
{\begin{tikzpicture}[scale=0.5]
\foreach \x in {0,1,2,3,4,5} {\foreach \y in {1,2,3} {\node at (\x,\y) [fill,circle,inner sep=1pt] {};};};
\draw (0,1)--(0,2)--(0,3) (1,1)--(1,2)--(1,3) (2,1)--(2,2)--(2,3) (3,1)--(3,2)--(3,3) (4,1)--(4,2)--(4,3) (5,1)--(5,2)--(5,3);
\draw (1,1)--(2,1)--(3,1)--(4,1)--(5,1);
\draw (0,2)--(1,2)--(2,2)--(3,2)--(4,2)--(5,2);
\draw (1,3)--(2,3)--(3,3)--(4,3)--(5,3);
\draw (-0.5,1.3)--(0,1) (-0.5,0.7)--(0,1) (5,1)--(5.5,1.3) (5,1)--(5.5,1) (5,1)--(5.5,0.7);
\draw (-0.5,1.7)--(0,2) (5,2)--(5.5,1.7) (5,2)--(5.5,2.3);
\draw (-0.5,3)--(0,3) (-0.5,2.7)--(0,3) (-0.5,3.3)--(0,3) (5,3)--(5.5,3.3);
\draw (0,2) to [out=20, in=160] (5,2);
\draw (0,1)--(5,3) (0,3)--(5,1);
\draw (1,1) to [out=10,in=220] (4,2);
\draw (1,3) to [out=350, in=140] (4,2);
\draw (1,2) to [out=40, in=190] (4,3);
\draw [ultra thick] (1,2) to [out=20, in=160] (3,2);
\end{tikzpicture}}
&\raisebox{7mm}{$\rightarrow$}&
{\begin{tikzpicture}[scale=0.5]
\foreach \x in {0,1,2,3,4,5} {\foreach \y in {1,2,3} {\node at (\x,\y) [fill,circle,inner sep=1pt] {};};};
\draw (0,1)--(0,2)--(0,3) (1,1)--(1,2)--(1,3) (2,1)--(2,2)--(2,3) (3,1)--(3,2)--(3,3) (4,1)--(4,2)--(4,3) (5,1)--(5,2)--(5,3);
\draw (1,1)--(2,1)--(3,1)--(4,1)--(5,1);
\draw (0,2)--(1,2)--(2,2)--(3,2)--(4,2)--(5,2);
\draw (1,3)--(2,3)--(3,3)--(4,3)--(5,3);
\draw (-0.5,1.3)--(0,1) (-0.5,0.7)--(0,1) (5,1)--(5.5,1.3) (5,1)--(5.5,1) (5,1)--(5.5,0.7);
\draw (-0.5,1.7)--(0,2) (5,2)--(5.5,1.7) (5,2)--(5.5,2.3);
\draw (-0.5,3)--(0,3) (-0.5,2.7)--(0,3) (-0.5,3.3)--(0,3) (5,3)--(5.5,3.3);
\draw (0,2) to [out=20, in=160] (5,2);
\draw (0,1)--(5,3) (0,3)--(5,1);
\draw (1,1) to [out=10,in=220] (4,2);
\draw (1,3) to [out=350, in=140] (4,2);
\draw (1,2) to [out=40, in=190] (4,3);
\draw (1,2) to [out=20, in=160] (3,2);
\draw [ultra thick] (1,2) to [out=320, in=170] (4,1);
\end{tikzpicture}}
&\raisebox{7mm}{$\rightarrow$}&
{\begin{tikzpicture}[scale=0.5]
\foreach \x in {0,1,2,3,4,5} {\foreach \y in {1,2,3} {\node at (\x,\y) [fill,circle,inner sep=1pt] {};};};
\draw (0,1)--(0,2)--(0,3) (1,1)--(1,2)--(1,3) (2,1)--(2,2)--(2,3) (3,1)--(3,2)--(3,3) (4,1)--(4,2)--(4,3) (5,1)--(5,2)--(5,3);
\draw (1,1)--(2,1)--(3,1)--(4,1)--(5,1);
\draw (1,2)--(2,2)--(3,2)--(4,2)--(5,2);
\draw (1,3)--(2,3)--(3,3)--(4,3)--(5,3);
\draw (-0.5,1.3)--(0,1) (-0.5,0.7)--(0,1) (5,1)--(5.5,1.3) (5,1)--(5.5,1) (5,1)--(5.5,0.7);
\draw (-0.5,1.7)--(0,2) (5,2)--(5.5,1.7) (5,2)--(5.5,2.3);
\draw (-0.5,3)--(0,3) (-0.5,2.7)--(0,3) (-0.5,3.3)--(0,3) (5,3)--(5.5,3.3);
\draw (0,2) to [out=20, in=160] (5,2);
\draw (0,1)--(5,3) (0,3)--(5,1);
\draw (1,1) to [out=10,in=220] (4,2);
\draw (1,3) to [out=350, in=140] (4,2);
\draw (1,2) to [out=40, in=190] (4,3);
\draw (1,2) to [out=20, in=160] (3,2);
\draw (1,2) to [out=320, in=170] (4,1);
\draw [dashed] (0,2)--(1,2);
\end{tikzpicture}}
\end{tabular}
\caption{A sequence of operations which induces $I(H) \simpsimeq I(G \sqcup C_{1,8})$ in the proof of Theorem \ref{mainm3} (Step 2)}
\label{mainm3step2}
\end{figure}

\item[Step 3] Delete three vertices, $\overline{5}$, $5$ and $\widehat{5}$. This process is performed as follows:
\begin{align*}
&\del(\overline{2}\overline{4},3), \add(\overline{3}\overline{5},4), \del(\overline{5},\overline{2}), \add(\widehat{3}5,\overline{4}), \del(5,\widehat{2}),\\
&\add(3\widehat{5},\overline{4}), \del(\widehat{5},2)
\end{align*}
(see Figure \ref{mainm3step34}).

\item[Step 4] Delete the vertex $\overline{3}$ by $\del(\overline{3}, 2)$ (see Figure \ref{mainm3step34}).
\begin{figure}[H]
\centering
\begin{tabular}{cccccc}
\raisebox{7mm}{$\rightarrow$}&
{\begin{tikzpicture}[scale=0.5]
\foreach \x in {0,1,2,3,4,5} {\foreach \y in {1,2,3} {\node at (\x,\y) [fill,circle,inner sep=1pt] {};};};
\draw (0,1)--(0,2)--(0,3) (1,1)--(1,2)--(1,3) (2,1)--(2,2)--(2,3) (3,1)--(3,2)--(3,3) (4,1)--(4,2)--(4,3) (5,1)--(5,2)--(5,3);
\draw (1,1)--(2,1)--(3,1)--(4,1)--(5,1);
\draw (1,2)--(2,2)--(3,2)--(4,2)--(5,2);
\draw (1,3)--(2,3)--(3,3)--(4,3)--(5,3);
\draw (-0.5,1.3)--(0,1) (-0.5,0.7)--(0,1) (5,1)--(5.5,1.3) (5,1)--(5.5,1) (5,1)--(5.5,0.7);
\draw (-0.5,1.7)--(0,2) (5,2)--(5.5,1.7) (5,2)--(5.5,2.3);
\draw (-0.5,3)--(0,3) (-0.5,2.7)--(0,3) (-0.5,3.3)--(0,3) (5,3)--(5.5,3.3);
\draw (0,2) to [out=20, in=160] (5,2);
\draw (0,1)--(5,3) (0,3)--(5,1);
\draw (1,1) to [out=10,in=220] (4,2);
\draw (1,3) to [out=350, in=140] (4,2);
\draw (1,2) to [out=40, in=190] (4,3);
\draw (1,2) to [out=320, in=170] (4,1);
\draw [dashed] (1,2) to [out=20, in=160] (3,2);
\end{tikzpicture}}
&\raisebox{7mm}{$\rightarrow$}&
{\begin{tikzpicture}[scale=0.5]
\foreach \x in {0,1,2,3,4,5} {\foreach \y in {1,2,3} {\node at (\x,\y) [fill,circle,inner sep=1pt] {};};};
\draw (0,1)--(0,2)--(0,3) (1,1)--(1,2)--(1,3) (2,1)--(2,2)--(2,3) (3,1)--(3,2)--(3,3) (4,1)--(4,2)--(4,3) (5,1)--(5,2)--(5,3);
\draw (1,1)--(2,1)--(3,1)--(4,1)--(5,1);
\draw (1,2)--(2,2)--(3,2)--(4,2)--(5,2);
\draw (1,3)--(2,3)--(3,3)--(4,3)--(5,3);
\draw (-0.5,1.3)--(0,1) (-0.5,0.7)--(0,1) (5,1)--(5.5,1.3) (5,1)--(5.5,1) (5,1)--(5.5,0.7);
\draw (-0.5,1.7)--(0,2) (5,2)--(5.5,1.7) (5,2)--(5.5,2.3);
\draw (-0.5,3)--(0,3) (-0.5,2.7)--(0,3) (-0.5,3.3)--(0,3) (5,3)--(5.5,3.3);
\draw (0,2) to [out=20, in=160] (5,2);
\draw (0,1)--(5,3) (0,3)--(5,1);
\draw (1,1) to [out=10,in=220] (4,2);
\draw (1,3) to [out=350, in=140] (4,2);
\draw (1,2) to [out=40, in=190] (4,3);
\draw (1,2) to [out=320, in=170] (4,1);
\draw [ultra thick] (2,2) to [out=20, in=160] (4,2);
\end{tikzpicture}}
&\raisebox{7mm}{$\rightarrow$}&
{\begin{tikzpicture}[scale=0.5]
\foreach \x in {0,1,2,3,5} {\foreach \y in {1,2,3} {\node at (\x,\y) [fill,circle,inner sep=1pt] {};};};
\node at (4,1) [fill,circle,inner sep=1pt] {};
\node at (4,3) [fill,circle,inner sep=1pt] {};
\draw (0,1)--(0,2)--(0,3) (1,1)--(1,2)--(1,3) (2,1)--(2,2)--(2,3) (3,1)--(3,2)--(3,3) (5,1)--(5,2)--(5,3);
\draw (1,1)--(2,1)--(3,1)--(4,1)--(5,1);
\draw (1,2)--(2,2)--(3,2);
\draw (1,3)--(2,3)--(3,3)--(4,3)--(5,3);
\draw (-0.5,1.3)--(0,1) (-0.5,0.7)--(0,1) (5,1)--(5.5,1.3) (5,1)--(5.5,1) (5,1)--(5.5,0.7);
\draw (-0.5,1.7)--(0,2) (5,2)--(5.5,1.7) (5,2)--(5.5,2.3);
\draw (-0.5,3)--(0,3) (-0.5,2.7)--(0,3) (-0.5,3.3)--(0,3) (5,3)--(5.5,3.3);
\draw (0,2) to [out=20, in=160] (5,2);
\draw (0,1)--(5,3) (0,3)--(5,1);
\draw (1,2) to [out=40, in=190] (4,3);
\draw (1,2) to [out=320, in=170] (4,1);
\draw [dashed] (3,2)--(5,2) (4,1)--(4,3);
\draw [dashed] (2,2) to [out=20, in=160] (4,2);
\draw [dashed] (1,1) to [out=10,in=220] (4,2);
\draw [dashed] (1,3) to [out=350, in=140] (4,2);
\node at (4,2) [circle, draw=black, fill=white, inner sep=1pt] {};
\end{tikzpicture}}
\\
\raisebox{7mm}{$\rightarrow$}&
{\begin{tikzpicture}[scale=0.5]
\foreach \x in {0,1,2,3,5} {\foreach \y in {1,2,3} {\node at (\x,\y) [fill,circle,inner sep=1pt] {};};};
\node at (4,1) [fill,circle,inner sep=1pt] {};
\node at (4,3) [fill,circle,inner sep=1pt] {};
\draw (0,1)--(0,2)--(0,3) (1,1)--(1,2)--(1,3) (2,1)--(2,2)--(2,3) (3,1)--(3,2)--(3,3) (5,1)--(5,2)--(5,3);
\draw (1,1)--(2,1)--(3,1)--(4,1)--(5,1);
\draw (1,2)--(2,2)--(3,2);
\draw (1,3)--(2,3)--(3,3)--(4,3)--(5,3);
\draw (-0.5,1.3)--(0,1) (-0.5,0.7)--(0,1) (5,1)--(5.5,1.3) (5,1)--(5.5,1) (5,1)--(5.5,0.7);
\draw (-0.5,1.7)--(0,2) (5,2)--(5.5,1.7) (5,2)--(5.5,2.3);
\draw (-0.5,3)--(0,3) (-0.5,2.7)--(0,3) (-0.5,3.3)--(0,3) (5,3)--(5.5,3.3);
\draw (0,2) to [out=20, in=160] (5,2);
\draw (0,1)--(5,3) (0,3)--(5,1);
\draw (1,2) to [out=40, in=190] (4,3);
\draw (1,2) to [out=320, in=170] (4,1);
\draw [ultra thick] (2,3) to [out=340,in=110] (4,1);
\end{tikzpicture}}
&\raisebox{7mm}{$\rightarrow$}&
{\begin{tikzpicture}[scale=0.5]
\foreach \x in {0,1,2,3,5} {\foreach \y in {1,2,3} {\node at (\x,\y) [fill,circle,inner sep=1pt] {};};};
\node at (4,3) [fill,circle,inner sep=1pt] {};
\draw (0,1)--(0,2)--(0,3) (1,1)--(1,2)--(1,3) (2,1)--(2,2)--(2,3) (3,1)--(3,2)--(3,3) (5,1)--(5,2)--(5,3);
\draw (1,1)--(2,1)--(3,1);
\draw (1,2)--(2,2)--(3,2);
\draw (1,3)--(2,3)--(3,3)--(4,3)--(5,3);
\draw (-0.5,1.3)--(0,1) (-0.5,0.7)--(0,1) (5,1)--(5.5,1.3) (5,1)--(5.5,1) (5,1)--(5.5,0.7);
\draw (-0.5,1.7)--(0,2) (5,2)--(5.5,1.7) (5,2)--(5.5,2.3);
\draw (-0.5,3)--(0,3) (-0.5,2.7)--(0,3) (-0.5,3.3)--(0,3) (5,3)--(5.5,3.3);
\draw (0,2) to [out=20, in=160] (5,2);
\draw (0,1)--(5,3) (0,3)--(5,1);
\draw (1,2) to [out=40, in=190] (4,3);
\draw [dashed] (3,1)--(5,1);
\draw [dashed] (2,3) to [out=340,in=110] (4,1);
\draw [dashed] (1,2) to [out=320, in=170] (4,1);
\node at (4,1) [circle, draw=black, fill=white, inner sep=1pt] {};
\end{tikzpicture}}
&\raisebox{7mm}{$\rightarrow$}&
{\begin{tikzpicture}[scale=0.5]
\foreach \x in {0,1,2,3,5} {\foreach \y in {1,2,3} {\node at (\x,\y) [fill,circle,inner sep=1pt] {};};};
\node at (4,3) [fill,circle,inner sep=1pt] {};
\draw (0,1)--(0,2)--(0,3) (1,1)--(1,2)--(1,3) (2,1)--(2,2)--(2,3) (3,1)--(3,2)--(3,3) (5,1)--(5,2)--(5,3);
\draw (1,1)--(2,1)--(3,1);
\draw (1,2)--(2,2)--(3,2);
\draw (1,3)--(2,3)--(3,3)--(4,3)--(5,3);
\draw (-0.5,1.3)--(0,1) (-0.5,0.7)--(0,1) (5,1)--(5.5,1.3) (5,1)--(5.5,1) (5,1)--(5.5,0.7);
\draw (-0.5,1.7)--(0,2) (5,2)--(5.5,1.7) (5,2)--(5.5,2.3);
\draw (-0.5,3)--(0,3) (-0.5,2.7)--(0,3) (-0.5,3.3)--(0,3) (5,3)--(5.5,3.3);
\draw (0,2) to [out=20, in=160] (5,2);
\draw (0,1)--(5,3) (0,3)--(5,1);
\draw (1,2) to [out=40, in=190] (4,3);
\draw [ultra thick] (2,1) to [out=20, in=250] (4,3);
\end{tikzpicture}}
\\
\raisebox{7mm}{$\rightarrow$}&
{\begin{tikzpicture}[scale=0.5]
\foreach \x in {0,1,2,3,5} {\foreach \y in {1,2,3} {\node at (\x,\y) [fill,circle,inner sep=1pt] {};};};
\draw (0,1)--(0,2)--(0,3) (1,1)--(1,2)--(1,3) (2,1)--(2,2)--(2,3) (3,1)--(3,2)--(3,3) (5,1)--(5,2)--(5,3);
\draw (1,1)--(2,1)--(3,1);
\draw (1,2)--(2,2)--(3,2);
\draw (1,3)--(2,3)--(3,3);
\draw (-0.5,1.3)--(0,1) (-0.5,0.7)--(0,1) (5,1)--(5.5,1.3) (5,1)--(5.5,1) (5,1)--(5.5,0.7);
\draw (-0.5,1.7)--(0,2) (5,2)--(5.5,1.7) (5,2)--(5.5,2.3);
\draw (-0.5,3)--(0,3) (-0.5,2.7)--(0,3) (-0.5,3.3)--(0,3) (5,3)--(5.5,3.3);
\draw (0,2) to [out=20, in=160] (5,2);
\draw (0,1)--(5,3) (0,3)--(5,1);
\draw [dashed] (1,2) to [out=40, in=190] (4,3);
\draw [dashed] (2,1) to [out=20, in=250] (4,3);
\draw [dashed] (3,3)--(5,3);
\node at (4,3) [circle, draw=black, fill=white, inner sep=1pt] {};
\end{tikzpicture}}
&\raisebox{7mm}{$\rightarrow$}&
{\begin{tikzpicture}[scale=0.5]
\foreach \x in {0,1,3,5} {\foreach \y in {1,2,3} {\node at (\x,\y) [fill,circle,inner sep=1pt] {};};};
\node at (2,1) [fill,circle,inner sep=1pt] {};
\node at (2,3) [fill,circle,inner sep=1pt] {};
\draw (0,1)--(0,2)--(0,3) (1,1)--(1,2)--(1,3) (3,1)--(3,2)--(3,3) (5,1)--(5,2)--(5,3);
\draw (1,1)--(2,1)--(3,1);
\draw (1,3)--(2,3)--(3,3);
\draw (-0.5,1.3)--(0,1) (-0.5,0.7)--(0,1) (5,1)--(5.5,1.3) (5,1)--(5.5,1) (5,1)--(5.5,0.7);
\draw (-0.5,1.7)--(0,2) (5,2)--(5.5,1.7) (5,2)--(5.5,2.3);
\draw (-0.5,3)--(0,3) (-0.5,2.7)--(0,3) (-0.5,3.3)--(0,3) (5,3)--(5.5,3.3);
\draw (0,2) to [out=20, in=160] (5,2);
\draw (0,1)--(5,3) (0,3)--(5,1);
\draw [dashed] (1,2)--(3,2) (2,1)--(2,3);
\node at (2,2) [circle, draw=black, fill=white, inner sep=1pt] {};
\end{tikzpicture}}
\end{tabular}
\caption{A sequence of operations which induces $I(H) \simpsimeq I(G \sqcup C_{1,8})$ in the proof of Theorem \ref{mainm3} (Step 3, Step 4)}
\label{mainm3step34}
\end{figure}
\end{description}
\end{proof}

\begin{remark}
Let $(m,k) = (1,5), (2,4), (3,6)$. In the proofs of Theorem \ref{mainm1}, Theorem \ref{mainm2} and Theorem \ref{mainm3}, we obtained $G \sqcup P_{m,k-3}$ from $H$ without changing the simple homotopy types of the independence complexes by adding $m$ edges $(i,1)(m-i+1,k)$, deleting edges $(i,1)(i,2)$ and deleting vertices $(i,k-1)$ of $P_{m,k}$ for $i=1,2, \ldots, m$ (see Figure \ref{pairmk}).
\begin{figure}[H]
\centering
\begin{tabular}{ccc}
{\begin{tikzpicture}
[scale=0.6]
\foreach \x in {1,2,3,4,5} {\foreach \y in {1} {\node at (\x,\y) [fill,circle,inner sep=1.0pt] {};};};
\draw (1,1)--(5,1);
\node at (3,2) {$H$};
\draw (0.5,0.7)--(1,1) (0.5,1.3)--(1,1) (5,1)--(5.5,0.7) (5,1)--(5.5,1) (5,1)--(5.5,1.3);
\end{tikzpicture}}
&{\begin{tikzpicture}[scale=0.6]\node at (0,2.3) {};\node at (0,0.7) {};\node at (0,1) {$\longrightarrow$};\end{tikzpicture}}&
{\begin{tikzpicture}
[scale=0.6]
\foreach \x in {1,2,3,5} {\foreach \y in {1} {\node at (\x,\y) [fill,circle,inner sep=1.0pt] {};};};
\draw (2,1)--(3,1);
\node at (3,2) {$G \sqcup P_{1,2}$};
\draw (1,1) to [out=20, in=160] (5,1);
\draw (0.5,0.7)--(1,1) (0.5,1.3)--(1,1) (5,1)--(5.5,0.7) (5,1)--(5.5,1) (5,1)--(5.5,1.3);
\end{tikzpicture}}\\
{\begin{tikzpicture}
[scale=0.6]
\foreach \x in {1,2,3,4} {\foreach \y in {1,2} {\node at (\x,\y) [fill,circle,inner sep=1.0pt] {};};};
\draw (1,1)--(2,1)--(3,1)--(4,1);
\draw (1,2)--(2,2)--(3,2)--(4,2);
\draw (1,1)--(1,2) (2,1)--(2,2) (3,1)--(3,2) (4,1)--(4,2);
\draw (0.5,1.7)--(1,2) (0.5,2.3)--(1,2) (0.5,0.7)--(1,1) (4,1)--(4.5,0.7) (4,1)--(4.5,1) (4,1)--(4.5,1.3) (4,2)--(4.5,1.7);
\node at (2.5,2.5) {$H$};
\end{tikzpicture}}
&{\begin{tikzpicture}[scale=0.6]\node at (0,2.3) {};\node at (0,0.7) {};\node at (0,1.5) {$\longrightarrow$};\end{tikzpicture}}&
{\begin{tikzpicture}
[scale=0.6]
\foreach \x in {1,2,4} {\foreach \y in {1,2} {\node at (\x,\y) [fill,circle,inner sep=1.0pt] {};};};
\draw (1,1)--(4,2) (1,2)--(4,1);
\draw (1,1)--(1,2) (4,1)--(4,2);
\draw (0.5,1.7)--(1,2) (0.5,2.3)--(1,2) (0.5,0.7)--(1,1) (4,1)--(4.5,0.7) (4,1)--(4.5,1) (4,1)--(4.5,1.3) (4,2)--(4.5,1.7);
\draw (2,1)--(2,2);
\node at (2.5,2.5) {$G \sqcup P_{2,1}$};
\end{tikzpicture}}\\
{\begin{tikzpicture}
[scale=0.6]
\foreach \x in {0,1,2,3,4,5} {\foreach \y in {1,2,3} {\node at (\x,\y) [fill,circle,inner sep=1.0pt] {};};};
\draw (0,1)--(0,2)--(0,3) (1,1)--(1,2)--(1,3) (2,1)--(2,2)--(2,3) (3,1)--(3,2)--(3,3) (4,1)--(4,2)--(4,3) (5,1)--(5,2)--(5,3);
\draw (0,1)--(1,1)--(2,1)--(3,1)--(4,1)--(5,1);
\draw (0,2)--(1,2)--(2,2)--(3,2)--(4,2)--(5,2);
\draw (0,3)--(1,3)--(2,3)--(3,3)--(4,3)--(5,3);
\draw (-0.5,1.3)--(0,1) (-0.5,0.7)--(0,1) (5,1)--(5.5,1.3) (5,1)--(5.5,1) (5,1)--(5.5,0.7);
\draw (-0.5,1.7)--(0,2) (5,2)--(5.5,1.7) (5,2)--(5.5,2.3);
\draw (-0.5,3)--(0,3) (-0.5,2.7)--(0,3) (-0.5,3.3)--(0,3) (5,3)--(5.5,3.3);
\node at (2.5,3.5) {$H$};
\end{tikzpicture}}
&{\begin{tikzpicture}[scale=0.6]\node at (0,3.3) {};\node at (0,0.7) {};\node at (0,2) {$\longrightarrow$};\end{tikzpicture}}&
{\begin{tikzpicture}[scale=0.6]
\foreach \x in {0,1,2,3,5} {\foreach \y in {1,2,3} {\node at (\x,\y) [fill,circle,inner sep=1.0pt] {};};};
\draw (0,1)--(0,2)--(0,3) (5,1)--(5,2)--(5,3);
\draw (-0.5,1.3)--(0,1) (5,1)--(5.5,1.3) (-0.5,0.7)--(0,1) (5,1)--(5.5,1) (5,1)--(5.5,0.7);
\draw (-0.5,1.7)--(0,2) (5,2)--(5.5,1.7) (5,2)--(5.5,2.3);
\draw (-0.5,3)--(0,3) (5,3)--(5.5,3.3) (-0.5,2.7)--(0,3) (-0.5,3.3)--(0,3);
\draw (0,2) to [out=20, in=160] (5,2);
\draw (0,1)--(5,3) (0,3)--(5,1);
\draw (1,1) grid (3,3);
\node at (2.5,3.5) {$G \sqcup P_{3,3}$};
\end{tikzpicture}}
\end{tabular}
\caption{The replacements in the proofs of Theorem \ref{mainm1}, Theorem \ref{mainm2} and Theorem \ref{mainm3}}
\label{pairmk}
\end{figure}
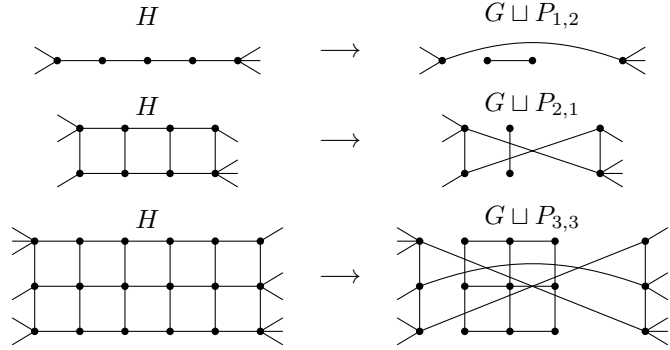

However, this does not hold for $m=4$. Assume that there exists a natural number $k$ such that
\begin{align*}
I(P_{4,k}) \simpsimeq I(P_{4,k-3} \sqcup P_{4,2})
\end{align*}
(see Figure \ref{P4kfigure}).
\begin{figure}[H]
\centering
\begin{tabular}{ccc}
{\begin{tikzpicture}[scale=0.6]
\foreach \x in {1,2,3,4,5,6,7} {\foreach \y in {1,2,3,4} {\node at (\x,\y) [fill,circle,inner sep=1.0pt] {};};};
\draw (1,1) grid (7,4);
\node at (4,4.5) {$P_{4,k}$};
\end{tikzpicture}}
&{\begin{tikzpicture}[scale=0.6]\node at (0,3.3) {};\node at (0,0.7) {};\node at (0,2) {$\longrightarrow$};\end{tikzpicture}}&
{\begin{tikzpicture}[scale=0.6]
\foreach \x in {1,2,3,4,5,7} {\foreach \y in {1,2,3,4} {\node at (\x,\y) [fill,circle,inner sep=1.0pt] {};};};
\draw (1,1)--(1,4) (7,1)--(7,4) (2,1) grid (5,4);
\draw (1,1)--(7,4) (1,4)--(7,1) (1,2)--(7,3) (1,3)--(7,2);
\node at (4,4.5) {$P_{4, k-3} \sqcup P_{4,2}$};
\end{tikzpicture}}
\end{tabular}
\caption{$P_{4,k}$ and $P_{4, k-3} \sqcup P_{4,2}$}
\label{P4kfigure}
\end{figure}
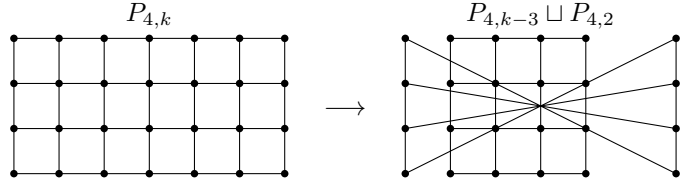

We have $I(P_{4,2}) \searrow {\dia}^1$ by 
$$\del(\overline{1}, 2), \del(\overline{2},1), \del(\widehat{1}, \widetilde{2}), \del(\widehat{2}, \widetilde{1}).$$
Thus, we have $I(P_{4,k}) \simpsimeq \Sigma^2 I(P_{4,k-3})$, which implies $\widetilde{\chi}(I(P_{4,k})) = \widetilde{\chi}(I(P_{4,k-3}))$, where $\widetilde{\chi}(K)$ denotes the reduced Euler characteristic of a simplicial complex $K$.  On the other hand, we have
\begin{align*}
&\widetilde{\chi}(I(P_{4,6l+i})) = \left \{
\begin{aligned}
&-2l-1 &\quad &(i = 0, 2), \\
&2l &\quad &(i = 1), \\
&2l+1 &\quad &(i = 3,5), \\
&-2l-2 &\quad &(i=4) ,
\end{aligned} \right. 
\end{align*}
which we will prove in Appendix \ref{appendixA}. In particular, we have $\widetilde{\chi}(I(P_{4,n})) \geq 0$ when $n$ is odd and $\widetilde{\chi}(I(P_{4,n})) <0$ when $n$ is even. 
This is a contradiction to $\widetilde{\chi}(I(P_{4,k})) = \widetilde{\chi}(I(P_{4,k-3}))$.

\end{remark}

\begin{proof}[Proof of Corollary \ref{CnnCnnnsimple} and Corollary \ref{MnnMnnnsimple}]
By Theorem \ref{mainm2} and Theorem \ref{mainm3}, we have
\begin{align*}
&I(C_{2,n+2}) \simpsimeq \Sigma I(M_{2,n}),& &I(M_{2,n+2}) \simpsimeq \Sigma I(M_{2,n}), \\
&I(C_{3,n+4}) \simpsimeq \Sigma^3 I(M_{3,n}),& &I(M_{3,n+4}) \simpsimeq \Sigma^3 I(M_{3,n}).
\end{align*}
The base cases for determining $I(C_{2,n})$ and $I(M_{2,n})$ are
\begin{align*}
&I(C_{2,1}) = {\dia}^{-1},& &I(C_{2,2}) \searrow {\dia}^0,&
&I(M_{2,1}) = {\dia}^0,& &I(M_{2,2}) = {\bigvee}_3 {\dia}^0 
\end{align*} 
(see Figure \ref{mainm2basecases}).
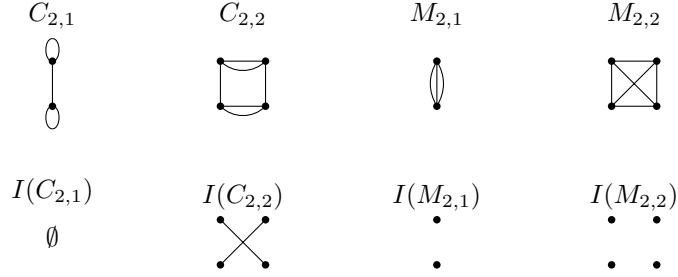
\begin{figure}[H]
\centering
\begin{tabular}{ccccccc}
\begin{tikzpicture}
[scale=0.6]
\draw (0,2) to [out=0, in=0] (0,1.5); 
\draw (0,1.5) to [out=180, in=180] (0,2);
\draw (0,3) to [out=0, in=0] (0,3.5); 
\draw (0,3.5) to [out=180, in=180] (0,3);
\draw (0,2)--(0,3);
\node at (0,2) [fill, circle, black, inner sep=1pt] {};
\node at (0,3) [fill, circle, black, inner sep=1pt] {};
\node at (0,4) {$C_{2,1}$};
\node at (0,1) {};
\end{tikzpicture}& \hspace{0.2in} &
\begin{tikzpicture}
[scale=0.6]
\foreach \x in {(0,2),(1,2),(0,3),(1,3)} {\node at \x [fill, circle, black, inner sep=1pt] {};};
\draw (0,2)--(1,2)--(1,3)--(0,3)--cycle;
\draw (0,2) to [out=-45, in=-135] (1,2);
\draw (0,3) to [out=-45, in=-135] (1,3);
\node at (0.5,4) {$C_{2,2}$};
\node at (0,1) {};
\end{tikzpicture}& \hspace{0.2in} & 
\begin{tikzpicture}
[scale=0.6]
\draw (0,2)--(0,3);
\node at (0,2) [fill, circle, black, inner sep=1pt] {};
\node at (0,3) [fill, circle, black, inner sep=1pt] {};
\draw (0,2) to [out=60, in=-60] (0,3);
\draw (0,2) to [out=120, in=-120] (0,3);
\node at (0,4) {$M_{2,1}$};
\node at (0,1) {};
\end{tikzpicture}& \hspace{0.2in} &
\begin{tikzpicture}
[scale=0.6]
\draw (0,2)--(0,3)--(1,3)--(1,2)--cycle (0,2)--(1,3) (0,3)--(1,2);
\foreach \x in {(0,2),(1,2),(0,3),(1,3)} {\node at \x [fill, circle, black, inner sep=1pt] {};};
\node at (0.5,4) {$M_{2,2}$};
\node at (0,1) {};
\end{tikzpicture}
\\
\begin{tikzpicture}
[scale=0.6]
\node at (0,0.5) {$\emptyset$};
\node at (0,1.5) {$I(C_{2,1})$};
\node at (0,0) {};
\end{tikzpicture}& \hspace{0.2in} &
\begin{tikzpicture}
[scale=0.6]
\foreach \x in {(0,0),(1,0),(0,1),(1,1)} {\node at \x [fill, circle, black, inner sep=1pt] {};};
\draw (0,0)--(1,1) (0,1)--(1,0);
\node at (0.5,1.5) {$I(C_{2,2})$};
\end{tikzpicture}& \hspace{0.2in} &
\begin{tikzpicture}
[scale=0.6]
\node at (0,0) [fill, circle, black, inner sep=1pt] {};
\node at (0,1) [fill, circle, black, inner sep=1pt] {};
\node at (0,1.5) {$I(M_{2,1})$};
\end{tikzpicture}& \hspace{0.2in} &
\begin{tikzpicture}
[scale=0.6]
\foreach \x in {(0,0),(1,0),(0,1),(1,1)} {\node at \x [fill, circle, black, inner sep=1pt] {};};
\node at (0.5,1.5) {$I(M_{2,2})$};
\end{tikzpicture}
\end{tabular}
\caption{$C_{2,1}$, $C_{2,2}$, $M_{2,1}$, $M_{2,2}$ and their independence complexes}
\label{mainm2basecases}
\end{figure}

In order to determine $I(C_{3,n})$ and $I(M_{3,n})$, it is sufficient to show that 
\begin{align*}
I(C_{3,1}) = {\dia}^{-1}, & &I(C_{3,2}) \searrow {\dia}^1, & &I(C_{3,3}) \searrow {\dia}^1, & &I(C_{3,4}) \simpsimeq {\bigvee}_3 {\dia}^2 \\
I(M_{3,1}) = {\dia}^0 , & &I(M_{3,2}) \simpsimeq {\dia}^0 , & &I(M_{3,3}) \searrow {\dia}^2 , & &I(M_{3,4}) \simpsimeq {\bigvee}_5 {\dia}^2.
\end{align*}

\begin{itemize}
\item It is clear that $I(C_{3,1}) = {\dia}^{-1}$ and $I(C_{3,2}) \searrow {\dia}^1$ (see Figure \ref{C312M312}). Figure \ref{C312M312} also indicates that $I(C_{3,2}) \searrow {\dia}^1$ by
$$\del(\overline{1}, 2), \del(\overline{2},1)$$
and that $I(M_{3,2}) \searrow {\dia}^0$ by
$$\del(\widehat{1},1), \del(\widehat{2},2), \del(\overline{1},2), \del(\overline{2},1).$$
\begin{figure}[H]
\centering
\begin{tabular}{cccccccccccc}
\begin{tikzpicture}
[scale=0.6]
\foreach \y in {1,2,3} {\node at (0,\y) [fill,circle,inner sep=1pt] {};};
\draw (0,1)--(0,3);
\draw (0,1) to [out=-30, in=-90] (1,1);
\draw (1,1) to [out=90, in=30] (0,1);
\draw (0,2) to [out=-30, in=-90] (1,2);
\draw (1,2) to [out=90, in=30] (0,2);
\draw (0,3) to [out=-30, in=-90] (1,3);
\draw (1,3) to [out=90, in=30] (0,3);
\node at (0.5, 4) {$C_{3,1}$};
\end{tikzpicture}
& &
\begin{tikzpicture}
[scale=0.6]
\foreach \y in {1,2,3} {\node at (0,\y) [fill,circle,inner sep=1pt] {};};
\draw (0,1)--(0,3);
\draw (0,3) to [out=250, in=110] (0,1);
\draw (0,2) to [out=-30, in=-90] (1,2);
\draw (1,2) to [out=90, in=30] (0,2);
\draw (0,1) to [out=140, in=220] (0,3);
\node at (0.5,4) {$M_{3,1}$};
\end{tikzpicture}
& &
\begin{tikzpicture}
[scale=0.6]
\foreach \x in {0,1} {\foreach \y in {1,2,3} {\node at (\x,\y) [fill,circle,inner sep=1pt] {};};};
\draw (0,1) grid (1,3);
\draw (0,1) to [out=40, in=140] (1,1);
\draw (0,2) to [out=40, in=140] (1,2);
\draw (0,3) to [out=40, in=140] (1,3);
\node at (0.5, 4) {$C_{3,2}$};
\end{tikzpicture}
&\raisebox{7mm}{$\rightarrow$}&
\begin{tikzpicture}
[scale=0.6]
\foreach \x in {0,1} {\foreach \y in {1,3} {\node at (\x,\y) [fill,circle,inner sep=1pt] {};};};
\draw (0,1)--(1,1) (0,3)--(1,3);
\draw (0,1) to [out=40, in=140] (1,1);
\draw [dashed] (0,2) to [out=40, in=140] (1,2);
\draw [dashed] (0,2)--(1,2) (0,1)--(0,3) (1,1)--(1,3);
\draw (0,3) to [out=40, in=140] (1,3);
\node at (0,2) [circle, draw=black, fill=white, inner sep=1pt] {};
\node at (1,2) [circle, draw=black, fill=white, inner sep=1pt] {};
\node at (0.5, 4) {};
\end{tikzpicture}
& &
\begin{tikzpicture}
[scale=0.6]
\foreach \x in {0,1} {\foreach \y in {1,2,3} {\node at (\x,\y) [fill,circle,inner sep=1pt] {};};};
\draw (0,1) grid (1,3);
\draw (0,1) to [out=80, in=190] (1,3);
\draw (0,2) to [out=40, in=140] (1,2);
\draw (0,3) to [out=350, in=100] (1,1);
\node at (0.5, 4) {$M_{3,2}$};
\end{tikzpicture}
&\raisebox{7mm}{$\rightarrow$}&
\begin{tikzpicture}
[scale=0.6]
\foreach \x in {0,1} {\foreach \y in {1} {\node at (\x,\y) [fill,circle,inner sep=1pt] {};};};
\draw (0,1)--(1,1);
\draw [dashed] (0,1) to [out=80, in=190] (1,3);
\draw [dashed] (0,2) to [out=40, in=140] (1,2);
\draw [dashed] (0,3) to [out=350, in=100] (1,1);
\draw [dashed] (0,2)--(0,3)--(1,3)--(1,2) (0,1)--(0,2)--(1,2)--(1,1);
\node at (0.5, 4) {};
\node at (0,3) [circle, draw=black, fill=white, inner sep=1pt] {};
\node at (1,3) [circle, draw=black, fill=white, inner sep=1pt] {};
\node at (0,2) [circle, draw=black, fill=white, inner sep=1pt] {};
\node at (1,2) [circle, draw=black, fill=white, inner sep=1pt] {};
\end{tikzpicture}
\end{tabular}
\caption{$C_{3,1}$, $M_{3,1}$, $C_{3,2}$, $M_{3,2}$ and the operations}
\label{C312M312}
\end{figure}
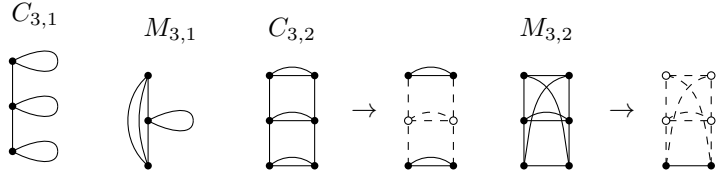

\item $I(C_{3,3}) \searrow {\dia}^1$ is obtained by
\begin{align*}
&\add(1\widehat{2},\overline{3}), \add(1\widehat{3},\overline{2}), \del(1,\widehat{1}), \del(\overline{2},3),
\del(\overline{3},2), \del(\widehat{3},\overline{1}), \del(\overline{1},\widehat{2})
\end{align*}
(see Figure \ref{C33}).
\begin{figure}[H]
\centering
\begin{tabular}{cccccccccc}
&
{\begin{tikzpicture}[scale=0.5]
\foreach \x in {(1,1),(1,2),(1,3),(2,1),(2,2),(2,3),(3,1),(3,2),(3,3)} {\node at \x [fill,circle,inner sep=1pt] {};};
\draw (1,1)--(1,2)--(1,3) (2,1)--(2,2)--(2,3) (3,1)--(3,2)--(3,3) (1,1)--(2,1)--(3,1) (1,2)--(2,2)--(3,2) (1,3)--(2,3)--(3,3);
\draw (1,1) to [out=30, in=150] (3,1);
\draw (1,2) to [out=30, in=150] (3,2);
\draw (1,3) to [out=30, in=150] (3,3);
\end{tikzpicture}}
&\raisebox{5mm}{$\rightarrow$}&
{\begin{tikzpicture}[scale=0.5]
\foreach \x in {(1,1),(1,2),(1,3),(2,1),(2,2),(2,3),(3,1),(3,2),(3,3)} {\node at \x [fill,circle,inner sep=1pt] {};};
\draw (1,1)--(1,2)--(1,3) (2,1)--(2,2)--(2,3) (3,1)--(3,2)--(3,3) (1,1)--(2,1)--(3,1) (1,2)--(2,2)--(3,2) (1,3)--(2,3)--(3,3);
\draw (1,1) to [out=30, in=150] (3,1);
\draw (1,2) to [out=30, in=150] (3,2);
\draw (1,3) to [out=30, in=150] (3,3);
\draw [ultra thick] (1,1) to [out=70, in=220] (2,3);
\end{tikzpicture}}
&\raisebox{5mm}{$\rightarrow$}&
{\begin{tikzpicture}[scale=0.5]
\foreach \x in {(1,1),(1,2),(1,3),(2,1),(2,2),(2,3),(3,1),(3,2),(3,3)} {\node at \x [fill,circle,inner sep=1pt] {};};
\draw (1,1)--(1,2)--(1,3) (2,1)--(2,2)--(2,3) (3,1)--(3,2)--(3,3) (1,1)--(2,1)--(3,1) (1,2)--(2,2)--(3,2) (1,3)--(2,3)--(3,3);
\draw (1,1) to [out=30, in=150] (3,1);
\draw (1,2) to [out=30, in=150] (3,2);
\draw (1,3) to [out=30, in=150] (3,3);
\draw (1,1) to [out=70, in=220] (2,3);
\draw [ultra thick] (1,1) to [out=60, in=210] (3,3);
\end{tikzpicture}}
&\raisebox{5mm}{$\rightarrow$}&
{\begin{tikzpicture}[scale=0.5]
\foreach \x in {(1,2),(1,3),(2,1),(2,2),(2,3),(3,1),(3,2),(3,3)} {\node at \x [fill,circle,inner sep=1pt] {};};
\node at (3,3) [fill,circle,inner sep=1pt] {};
\draw (1,2)--(1,3) (2,1)--(2,2)--(2,3) (3,1)--(3,2)--(3,3) (2,1)--(3,1) (1,2)--(2,2)--(3,2) (1,3)--(2,3)--(3,3);
\draw (1,2) to [out=30, in=150] (3,2);
\draw (1,3) to [out=30, in=150] (3,3);
\draw [dashed] (1,1) to [out=30, in=150] (3,1);
\draw [dashed] (1,1) to [out=70, in=220] (2,3);
\draw [dashed] (1,1) to [out=60, in=210] (3,3);
\draw [dashed] (1,2)--(1,1)--(2,1);
\node at (1,1) [circle, draw=black, fill=white, inner sep=1pt] {};
\end{tikzpicture}}
&\raisebox{5mm}{$\rightarrow$}&
{\begin{tikzpicture}[scale=0.5]
\foreach \x in {(1,2),(1,3),(2,1),(2,3),(3,1),(3,2),(3,3)} {\node at \x [fill,circle,inner sep=1pt] {};};
\draw (1,2)--(1,3) (3,1)--(3,2)--(3,3) (2,1)--(3,1) (1,3)--(2,3)--(3,3);
\draw (1,2) to [out=30, in=150] (3,2);
\draw (1,3) to [out=30, in=150] (3,3);
\draw [dashed] (1,2)--(3,2) (2,1)--(2,3);
\node at (2,2) [circle, draw=black, fill=white, inner sep=1pt] {};
\end{tikzpicture}}
\\
\raisebox{5mm}{$\rightarrow$}&
{\begin{tikzpicture}[scale=0.5]
\foreach \x in {(1,2),(1,3),(2,1),(2,3),(3,1),(3,3)} {\node at \x [fill,circle,inner sep=1pt] {};};
\draw (1,2)--(1,3) (2,1)--(3,1) (1,3)--(2,3)--(3,3);
\draw (1,3) to [out=30, in=150] (3,3);
\draw [dashed] (3,1)--(3,3);
\draw [dashed] (1,2) to [out=30, in=150] (3,2);
\node at (3,2) [circle, draw=black, fill=white, inner sep=1pt] {};
\end{tikzpicture}}
&\raisebox{5mm}{$\rightarrow$}&
{\begin{tikzpicture}[scale=0.5]
\foreach \x in {(1,2),(1,3),(2,1),(2,3),(3,1)} {\node at \x [fill,circle,inner sep=1pt] {};};
\draw (1,2)--(1,3) (2,1)--(3,1) (1,3)--(2,3);
\draw [dashed] (2,3)--(3,3);
\draw [dashed] (1,3) to [out=30, in=150] (3,3);
\node at (3,3) [circle, draw=black, fill=white, inner sep=1pt] {};
\end{tikzpicture}}
&\raisebox{5mm}{$\rightarrow$}&
{\begin{tikzpicture}[scale=0.5]
\foreach \x in {(1,3),(2,1),(2,3),(3,1)} {\node at \x [fill,circle,inner sep=1pt] {};};
\draw (2,1)--(3,1) (1,3)--(2,3);
\draw [dashed] (1,2)--(1,3);
\node at (1,2) [circle, draw=black, fill=white, inner sep=1pt] {};
\end{tikzpicture}}
\end{tabular}
\caption{A  sequence of operations which induces $I(C_{3,3}) \searrow I(e \sqcup e)$}
\label{C33}
\end{figure}

\item We show $I(C_{3,4}) \simpsimeq I(C_{2,4} \sqcup e) \simpsimeq \bigvee_3 {\dia}^2$. The first transformation is performed as follows:
\begin{align*}
&\add(1\widehat{3},\overline{2}), \del(1\overline{1},\widehat{2}), \del(3\overline{3},1), \add(\overline{2}\overline{4},3), \\
&\del(2\overline{2},4), \del(4,2), \del(1,3), \del(\overline{2}\overline{4},\widehat{3})
\end{align*}
(see Figure \ref{C34}). The second transformation is obtained by
$$I(C_{2,4}) \simpsimeq \Sigma I(M_{2,2}) \simpsimeq {\bigvee}_3 {\dia}^1 .$$
\begin{figure}[H]
\centering
\begin{tabular}{cccccccc}
&
{\begin{tikzpicture}[scale=0.5]
\foreach \x in {1,2,3,4} {\foreach \y in {1,2,3} {\node at (\x,\y) [fill,circle,inner sep=1pt] {};};};
\draw (1,1)--(1,2)--(1,3) (2,1)--(2,2)--(2,3) (3,1)--(3,2)--(3,3) (4,1)--(4,2)--(4,3); 
\draw (1,1)--(2,1)--(3,1)--(4,1) (1,2)--(2,2)--(3,2)--(4,2) (1,3)--(2,3)--(3,3)--(4,3);
\draw (1,1) to [out=30, in=150] (4,1);
\draw (1,2) to [out=30, in=150] (4,2);
\draw (1,3) to [out=30, in=150] (4,3);
\end{tikzpicture}}
&\raisebox{5mm}{$\rightarrow$}&
{\begin{tikzpicture}[scale=0.5]
\foreach \x in {1,2,3,4} {\foreach \y in {1,2,3} {\node at (\x,\y) [fill,circle,inner sep=1pt] {};};};
\draw (1,1)--(1,2)--(1,3) (2,1)--(2,2)--(2,3) (3,1)--(3,2)--(3,3) (4,1)--(4,2)--(4,3); 
\draw (1,1)--(2,1)--(3,1)--(4,1) (1,2)--(2,2)--(3,2)--(4,2) (1,3)--(2,3)--(3,3)--(4,3);
\draw (1,1) to [out=30, in=150] (4,1);
\draw (1,2) to [out=30, in=150] (4,2);
\draw (1,3) to [out=30, in=150] (4,3);
\draw [ultra thick] (1,1) to [out=60, in=210] (3,3);
\end{tikzpicture}}
&\raisebox{5mm}{$\rightarrow$}&
{\begin{tikzpicture}[scale=0.5]
\foreach \x in {1,2,3,4} {\foreach \y in {1,2,3} {\node at (\x,\y) [fill,circle,inner sep=1pt] {};};};
\draw (1,2)--(1,3) (2,1)--(2,2)--(2,3) (3,1)--(3,2)--(3,3) (4,1)--(4,2)--(4,3); 
\draw (1,1)--(2,1)--(3,1)--(4,1) (1,2)--(2,2)--(3,2)--(4,2) (1,3)--(2,3)--(3,3)--(4,3);
\draw (1,1) to [out=30, in=150] (4,1);
\draw (1,2) to [out=30, in=150] (4,2);
\draw (1,3) to [out=30, in=150] (4,3);
\draw (1,1) to [out=60, in=210] (3,3);
\draw [dashed] (1,1)--(1,2);
\end{tikzpicture}}
&\raisebox{5mm}{$\rightarrow$}&
{\begin{tikzpicture}[scale=0.5]
\foreach \x in {1,2,3,4} {\foreach \y in {1,2,3} {\node at (\x,\y) [fill,circle,inner sep=1pt] {};};};
\draw (1,2)--(1,3) (2,1)--(2,2)--(2,3) (3,2)--(3,3) (4,1)--(4,2)--(4,3); 
\draw (1,1)--(2,1)--(3,1)--(4,1) (1,2)--(2,2)--(3,2)--(4,2) (1,3)--(2,3)--(3,3)--(4,3);
\draw (1,1) to [out=30, in=150] (4,1);
\draw (1,2) to [out=30, in=150] (4,2);
\draw (1,3) to [out=30, in=150] (4,3);
\draw (1,1) to [out=60, in=210] (3,3);
\draw [dashed] (3,1)--(3,2);
\end{tikzpicture}}
\\
\raisebox{5mm}{$\rightarrow$}&
{\begin{tikzpicture}[scale=0.5]
\foreach \x in {1,2,3,4} {\foreach \y in {1,2,3} {\node at (\x,\y) [fill,circle,inner sep=1pt] {};};};
\draw (1,2)--(1,3) (2,1)--(2,2)--(2,3) (3,2)--(3,3) (4,1)--(4,2)--(4,3); 
\draw (1,1)--(2,1)--(3,1)--(4,1) (1,2)--(2,2)--(3,2)--(4,2) (1,3)--(2,3)--(3,3)--(4,3);
\draw (1,1) to [out=30, in=150] (4,1);
\draw (1,2) to [out=30, in=150] (4,2);
\draw (1,3) to [out=30, in=150] (4,3);
\draw (1,1) to [out=60, in=210] (3,3);
\draw [ultra thick] (2,2) to [out=20, in=160] (4,2);
\end{tikzpicture}}
&\raisebox{5mm}{$\rightarrow$}&
{\begin{tikzpicture}[scale=0.5]
\foreach \x in {1,2,3,4} {\foreach \y in {1,2,3} {\node at (\x,\y) [fill,circle,inner sep=1pt] {};};};
\draw (1,2)--(1,3) (2,2)--(2,3) (3,2)--(3,3) (4,1)--(4,2)--(4,3); 
\draw (1,1)--(2,1)--(3,1)--(4,1) (1,2)--(2,2)--(3,2)--(4,2) (1,3)--(2,3)--(3,3)--(4,3);
\draw (1,1) to [out=30, in=150] (4,1);
\draw (1,2) to [out=30, in=150] (4,2);
\draw (1,3) to [out=30, in=150] (4,3);
\draw (1,1) to [out=60, in=210] (3,3);
\draw (2,2) to [out=20, in=160] (4,2);
\draw [dashed] (2,1)--(2,2);
\end{tikzpicture}}
&\raisebox{5mm}{$\rightarrow$}&
{\begin{tikzpicture}[scale=0.5]
\node at (1,1) [fill,circle,inner sep=1pt] {};
\node at (2,1) [fill,circle,inner sep=1pt] {};
\node at (3,1) [fill,circle,inner sep=1pt] {};
\foreach \x in {1,2,3,4} {\foreach \y in {2,3} {\node at (\x,\y) [fill,circle,inner sep=1pt] {};};};
\draw (1,2)--(1,3) (2,2)--(2,3) (3,2)--(3,3) (4,2)--(4,3); 
\draw (1,1)--(2,1)--(3,1) (1,2)--(2,2)--(3,2)--(4,2) (1,3)--(2,3)--(3,3)--(4,3);
\draw (1,2) to [out=30, in=150] (4,2);
\draw (1,3) to [out=30, in=150] (4,3);
\draw (1,1) to [out=60, in=210] (3,3);
\draw (2,2) to [out=20, in=160] (4,2);
\draw [dashed] (3,1)--(4,1)--(4,2);
\draw [dashed] (1,1) to [out=30, in=150] (4,1);
\node at (4,1) [circle, draw=black, fill=white, inner sep=1pt] {};
\end{tikzpicture}}
&\raisebox{5mm}{$\rightarrow$}&
{\begin{tikzpicture}[scale=0.5]
\node at (2,1) [fill,circle,inner sep=1pt] {};
\node at (3,1) [fill,circle,inner sep=1pt] {};
\foreach \x in {1,2,3,4} {\foreach \y in {2,3} {\node at (\x,\y) [fill,circle,inner sep=1pt] {};};};
\draw (1,2)--(1,3) (2,2)--(2,3) (3,2)--(3,3) (4,2)--(4,3); 
\draw (2,1)--(3,1) (1,2)--(2,2)--(3,2)--(4,2) (1,3)--(2,3)--(3,3)--(4,3);
\draw (1,2) to [out=30, in=150] (4,2);
\draw (1,3) to [out=30, in=150] (4,3);
\draw (2,2) to [out=20, in=160] (4,2);
\draw [dashed] (1,1) to [out=60, in=210] (3,3);
\draw [dashed] (1,1)--(2,1);
\node at (1,1) [circle, draw=black, fill=white, inner sep=1pt] {};
\end{tikzpicture}}
\\
\raisebox{5mm}{$\rightarrow$}&
{\begin{tikzpicture}[scale=0.5]
\node at (2,1) [fill,circle,inner sep=1pt] {};
\node at (3,1) [fill,circle,inner sep=1pt] {};
\foreach \x in {1,2,3,4} {\foreach \y in {2,3} {\node at (\x,\y) [fill,circle,inner sep=1pt] {};};};
\draw (1,2)--(1,3) (2,2)--(2,3) (3,2)--(3,3) (4,2)--(4,3); 
\draw (2,1)--(3,1) (1,2)--(2,2)--(3,2)--(4,2) (1,3)--(2,3)--(3,3)--(4,3);
\draw (1,2) to [out=30, in=150] (4,2);
\draw (1,3) to [out=30, in=150] (4,3);
\draw [dashed] (2,2) to [out=20, in=160] (4,2);
\end{tikzpicture}}
\end{tabular}
\caption{A sequence of operations which induces $I(C_{3,4}) \simpsimeq I(C_{2,4} \sqcup e)$}
\label{C34}
\end{figure}

\item We show $I(M_{3,3}) \searrow I(C_{1,8}) \searrow {\dia}^2$. The first transformation is performed as follows:
\begin{align*}
&\del(1\widehat{3},\overline{2}), \del(\widehat{1}3,\overline{2}), \del(\overline{1}\overline{3},2), \del(\overline{2},1)
\end{align*}
(see Figure \ref{M33}). The second transformation is obtained by Corollary \ref{Cnsimple}.
\begin{figure}[H]
\centering
\begin{tabular}{cccccccccc}
&
{\begin{tikzpicture}[scale=0.5]
\foreach \x in {(1,1),(1,2),(1,3),(2,1),(2,2),(2,3),(3,1),(3,2),(3,3)} {\node at \x [fill,circle,inner sep=1pt] {};};
\draw (1,1)--(1,2)--(1,3) (2,1)--(2,2)--(2,3) (3,1)--(3,2)--(3,3) (1,1)--(2,1)--(3,1) (1,2)--(2,2)--(3,2) (1,3)--(2,3)--(3,3);
\draw (1,1) to [out=70, in=200] (3,3);
\draw (1,2) to [out=30, in=150] (3,2);
\draw (1,3) to [out=340, in=110] (3,1);
\end{tikzpicture}}
&\raisebox{5mm}{$\rightarrow$}&
{\begin{tikzpicture}[scale=0.5]
\foreach \x in {(1,1),(1,2),(1,3),(2,1),(2,2),(2,3),(3,1),(3,2),(3,3)} {\node at \x [fill,circle,inner sep=1pt] {};};
\draw (1,1)--(1,2)--(1,3) (2,1)--(2,2)--(2,3) (3,1)--(3,2)--(3,3) (1,1)--(2,1)--(3,1) (1,2)--(2,2)--(3,2) (1,3)--(2,3)--(3,3);
\draw (1,2) to [out=30, in=150] (3,2);
\draw (1,3) to [out=340, in=110] (3,1);
\draw [dashed] (1,1) to [out=70, in=200] (3,3);
\end{tikzpicture}}
&\raisebox{5mm}{$\rightarrow$}&
{\begin{tikzpicture}[scale=0.5]
\foreach \x in {(1,1),(1,2),(1,3),(2,1),(2,2),(2,3),(3,1),(3,2),(3,3)} {\node at \x [fill,circle,inner sep=1pt] {};};
\draw (1,1)--(1,2)--(1,3) (2,1)--(2,2)--(2,3) (3,1)--(3,2)--(3,3) (1,1)--(2,1)--(3,1) (1,2)--(2,2)--(3,2) (1,3)--(2,3)--(3,3);
\draw (1,2) to [out=30, in=150] (3,2);
\draw [dashed] (1,3) to [out=340, in=110] (3,1);
\end{tikzpicture}}
&\raisebox{5mm}{$\rightarrow$}&
{\begin{tikzpicture}[scale=0.5]
\foreach \x in {(1,1),(1,2),(1,3),(2,1),(2,2),(2,3),(3,1),(3,2),(3,3)} {\node at \x [fill,circle,inner sep=1pt] {};};
\draw (1,1)--(1,2)--(1,3) (2,1)--(2,2)--(2,3) (3,1)--(3,2)--(3,3) (1,1)--(2,1)--(3,1) (1,2)--(2,2)--(3,2) (1,3)--(2,3)--(3,3);
\draw [dashed] (1,2) to [out=30, in=150] (3,2);
\end{tikzpicture}}
&\raisebox{5mm}{$\rightarrow$}&
{\begin{tikzpicture}[scale=0.5]
\foreach \x in {(1,1),(1,2),(1,3),(2,1),(2,3),(3,1),(3,2),(3,3)} {\node at \x [fill,circle,inner sep=1pt] {};};
\draw (1,1)--(1,2)--(1,3) (3,1)--(3,2)--(3,3) (1,1)--(2,1)--(3,1) (1,3)--(2,3)--(3,3);
\draw [dashed] (1,2)--(3,2) (2,1)--(2,3);
\node at (2,2) [circle, draw=black, fill=white, inner sep=1pt] {};
\end{tikzpicture}}
\end{tabular}
\caption{A sequence of operations which induces $I(M_{3,3}) \simpsimeq I(C_{1,8})$}
\label{M33}
\end{figure}

\item We show that $I(M_{3,4}) \simpsimeq \bigvee_5 {\dia}^2$. Let $G$ be a graph in Figure \ref{M34figure1}, where $I(G) \simpsimeq \bigvee_5 {\dia}^1$.
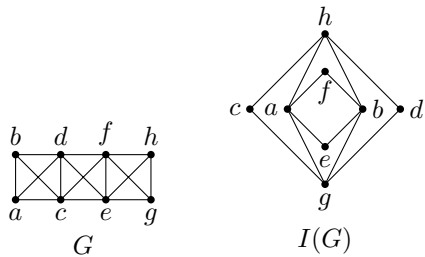
\begin{figure}[H]
\centering
\begin{tabular}{ccc}
\begin{tikzpicture}
[scale=0.6]
\foreach \x in {1,2,3,4} {\foreach \y in {1,2} {\node at (\x,\y) [fill,circle,inner sep=1pt] {};};};
\draw (1,1) grid (4,2);
\draw (1,1)--(2,2)--(3,1)--(4,2) (1,2)--(2,1)--(3,2)--(4,1);
\node at (1,1) [below] {$a$};
\node at (1,2) [above] {$b$};
\node at (2,1) [below] {$c$};
\node at (2,2) [above] {$d$};
\node at (3,1) [below] {$e$};
\node at (3,2) [above] {$f$};
\node at (4,1) [below] {$g$};
\node at (4,2) [above] {$h$};
\node at (2.5, 0) {$G$};
\end{tikzpicture}
& &
\begin{tikzpicture}
[scale=0.5]
\foreach \x in {(-2,0),(-1,0),(1,0),(2,0),(0,-2),(0,-1),(0,1),(0,2)} {\node at \x [fill,circle,inner sep=1pt] {};};
\node at (-2,0) [left] {$c$};
\node at (-1,0) [left] {$a$};
\node at (1,0) [right] {$b$};
\node at (2,0) [right] {$d$};
\node at (0,-2) [below] {$g$};
\node at (0,-1) [below] {$e$};
\node at (0,1) [below] {$f$};
\node at (0,2) [above] {$h$};
\draw (-2,0)--(0,-2)--(2,0)--(0,2)--cycle;
\draw (-1,0)--(0,-2)--(1,0)--(0,2)--cycle;
\draw (-1,0)--(0,-1)--(1,0)--(0,1)--cycle;
\node at (0, -3.5) {$I(G)$};
\end{tikzpicture}
\end{tabular}
\caption{The graph $G$ and its independence complex}
\label{M34figure1}
\end{figure}

Then, we obtain $I(M_{3,4}) \simpsimeq I(G \sqcup e)$ by
\begin{align*}
&\add(\overline{1}\overline{3},2), \add(\overline{2}\overline{4},3), \add(1\overline{2},\widehat{3}), \add(\overline{1}2,4), 
\add(3\overline{4},1), \\
&\add(\overline{3}4,\widehat{2}), \del(\overline{1}\widehat{1},3), \del(\overline{2}\widehat{2},\widehat{4}), 
\del(\overline{3}\widehat{3},\widehat{1}), \add(4\widehat{4},\widehat{2}), \\
&\del(\widehat{1}4,\widehat{3}), \del(\widehat{3},\widehat{1}), \del(23,\widehat{4}), \add(\overline{3}\widehat{4},2), \del(\widehat{4},3)
\end{align*}
(see Figure \ref{M34figure2}).
\begin{figure}[H]
\centering
\begin{tabular}{cccccccc}
&
{\begin{tikzpicture}[scale=0.5]
\foreach \x in {1,2,3,4} {\foreach \y in {1,2,3} {\node at (\x,\y) [fill,circle,inner sep=1pt] {};};};
\draw (1,1)--(1,2)--(1,3) (2,1)--(2,2)--(2,3) (3,1)--(3,2)--(3,3) (4,1)--(4,2)--(4,3); 
\draw (1,1)--(2,1)--(3,1)--(4,1) (1,2)--(2,2)--(3,2)--(4,2) (1,3)--(2,3)--(3,3)--(4,3);
\draw (1,1) to [out=70, in=190] (4,3);
\draw (1,2) to [out=30, in=150] (4,2);
\draw (1,3) to [out=350, in=110] (4,1);
\end{tikzpicture}}
&\raisebox{5mm}{$\rightarrow$}&
{\begin{tikzpicture}[scale=0.5]
\foreach \x in {1,2,3,4} {\foreach \y in {1,2,3} {\node at (\x,\y) [fill,circle,inner sep=1pt] {};};};
\draw (1,1)--(1,2)--(1,3) (2,1)--(2,2)--(2,3) (3,1)--(3,2)--(3,3) (4,1)--(4,2)--(4,3); 
\draw (1,1)--(2,1)--(3,1)--(4,1) (1,2)--(2,2)--(3,2)--(4,2) (1,3)--(2,3)--(3,3)--(4,3);
\draw (1,1) to [out=70, in=190] (4,3);
\draw (1,2) to [out=30, in=150] (4,2);
\draw (1,3) to [out=350, in=110] (4,1);
\draw [ultra thick] (1,2) to [out=20, in=160] (3,2);
\end{tikzpicture}}
&\raisebox{5mm}{$\rightarrow$}&
{\begin{tikzpicture}[scale=0.5]
\foreach \x in {1,2,3,4} {\foreach \y in {1,2,3} {\node at (\x,\y) [fill,circle,inner sep=1pt] {};};};
\draw (1,1)--(1,2)--(1,3) (2,1)--(2,2)--(2,3) (3,1)--(3,2)--(3,3) (4,1)--(4,2)--(4,3); 
\draw (1,1)--(2,1)--(3,1)--(4,1) (1,2)--(2,2)--(3,2)--(4,2) (1,3)--(2,3)--(3,3)--(4,3);
\draw (1,1) to [out=70, in=190] (4,3);
\draw (1,2) to [out=30, in=150] (4,2);
\draw (1,3) to [out=350, in=110] (4,1);
\draw (1,2) to [out=20, in=160] (3,2);
\draw [ultra thick] (2,2) to [out=20, in=160] (4,2);
\end{tikzpicture}}
&\raisebox{5mm}{$\rightarrow$}&
{\begin{tikzpicture}[scale=0.5]
\foreach \x in {1,2,3,4} {\foreach \y in {1,2,3} {\node at (\x,\y) [fill,circle,inner sep=1pt] {};};};
\draw (1,1)--(1,2)--(1,3) (2,1)--(2,2)--(2,3) (3,1)--(3,2)--(3,3) (4,1)--(4,2)--(4,3); 
\draw (1,1)--(2,1)--(3,1)--(4,1) (1,2)--(2,2)--(3,2)--(4,2) (1,3)--(2,3)--(3,3)--(4,3);
\draw (1,1) to [out=70, in=190] (4,3);
\draw (1,2) to [out=30, in=150] (4,2);
\draw (1,3) to [out=350, in=110] (4,1);
\draw (1,2) to [out=20, in=160] (3,2);
\draw (2,2) to [out=20, in=160] (4,2);
\draw [ultra thick] (1,1)--(2,2);
\end{tikzpicture}}
\\
\raisebox{5mm}{$\rightarrow$}&
{\begin{tikzpicture}[scale=0.5]
\foreach \x in {1,2,3,4} {\foreach \y in {1,2,3} {\node at (\x,\y) [fill,circle,inner sep=1pt] {};};};
\draw (1,1)--(1,2)--(1,3) (2,1)--(2,2)--(2,3) (3,1)--(3,2)--(3,3) (4,1)--(4,2)--(4,3); 
\draw (1,1)--(2,1)--(3,1)--(4,1) (1,2)--(2,2)--(3,2)--(4,2) (1,3)--(2,3)--(3,3)--(4,3);
\draw (1,1) to [out=70, in=190] (4,3);
\draw (1,2) to [out=30, in=150] (4,2);
\draw (1,3) to [out=350, in=110] (4,1);
\draw (1,2) to [out=20, in=160] (3,2);
\draw (2,2) to [out=20, in=160] (4,2);
\draw (1,1)--(2,2);
\draw [ultra thick] (1,2)--(2,1);
\end{tikzpicture}}
&\raisebox{5mm}{$\rightarrow$}&
{\begin{tikzpicture}[scale=0.5]
\foreach \x in {1,2,3,4} {\foreach \y in {1,2,3} {\node at (\x,\y) [fill,circle,inner sep=1pt] {};};};
\draw (1,1)--(1,2)--(1,3) (2,1)--(2,2)--(2,3) (3,1)--(3,2)--(3,3) (4,1)--(4,2)--(4,3); 
\draw (1,1)--(2,1)--(3,1)--(4,1) (1,2)--(2,2)--(3,2)--(4,2) (1,3)--(2,3)--(3,3)--(4,3);
\draw (1,1) to [out=70, in=190] (4,3);
\draw (1,2) to [out=30, in=150] (4,2);
\draw (1,3) to [out=350, in=110] (4,1);
\draw (1,2) to [out=20, in=160] (3,2);
\draw (2,2) to [out=20, in=160] (4,2);
\draw (1,1)--(2,2) (1,2)--(2,1);
\draw [ultra thick] (3,1)--(4,2);
\end{tikzpicture}}
&\raisebox{5mm}{$\rightarrow$}&
{\begin{tikzpicture}[scale=0.5]
\foreach \x in {1,2,3,4} {\foreach \y in {1,2,3} {\node at (\x,\y) [fill,circle,inner sep=1pt] {};};};
\draw (1,1)--(1,2)--(1,3) (2,1)--(2,2)--(2,3) (3,1)--(3,2)--(3,3) (4,1)--(4,2)--(4,3); 
\draw (1,1)--(2,1)--(3,1)--(4,1) (1,2)--(2,2)--(3,2)--(4,2) (1,3)--(2,3)--(3,3)--(4,3);
\draw (1,1) to [out=70, in=190] (4,3);
\draw (1,2) to [out=30, in=150] (4,2);
\draw (1,3) to [out=350, in=110] (4,1);
\draw (1,2) to [out=20, in=160] (3,2);
\draw (2,2) to [out=20, in=160] (4,2);
\draw (1,1)--(2,2) (1,2)--(2,1) (3,1)--(4,2);
\draw [ultra thick] (3,2)--(4,1);
\end{tikzpicture}}
&\raisebox{5mm}{$\rightarrow$}&
{\begin{tikzpicture}[scale=0.5]
\foreach \x in {1,2,3,4} {\foreach \y in {1,2,3} {\node at (\x,\y) [fill,circle,inner sep=1pt] {};};};
\draw (1,1)--(1,2) (2,1)--(2,2)--(2,3) (3,1)--(3,2)--(3,3) (4,1)--(4,2)--(4,3); 
\draw (1,1)--(2,1)--(3,1)--(4,1) (1,2)--(2,2)--(3,2)--(4,2) (1,3)--(2,3)--(3,3)--(4,3);
\draw (1,1) to [out=70, in=190] (4,3);
\draw (1,2) to [out=30, in=150] (4,2);
\draw (1,3) to [out=350, in=110] (4,1);
\draw (1,2) to [out=20, in=160] (3,2);
\draw (2,2) to [out=20, in=160] (4,2);
\draw (1,1)--(2,2) (1,2)--(2,1) (3,1)--(4,2) (3,2)--(4,1);
\draw [dashed] (1,2)--(1,3);
\end{tikzpicture}}
\\
\raisebox{5mm}{$\rightarrow$}&
{\begin{tikzpicture}[scale=0.5]
\foreach \x in {1,2,3,4} {\foreach \y in {1,2,3} {\node at (\x,\y) [fill,circle,inner sep=1pt] {};};};
\draw (1,1)--(1,2) (2,1)--(2,2) (3,1)--(3,2)--(3,3) (4,1)--(4,2)--(4,3); 
\draw (1,1)--(2,1)--(3,1)--(4,1) (1,2)--(2,2)--(3,2)--(4,2) (1,3)--(2,3)--(3,3)--(4,3);
\draw (1,1) to [out=70, in=190] (4,3);
\draw (1,2) to [out=30, in=150] (4,2);
\draw (1,3) to [out=350, in=110] (4,1);
\draw (1,2) to [out=20, in=160] (3,2);
\draw (2,2) to [out=20, in=160] (4,2);
\draw (1,1)--(2,2) (1,2)--(2,1) (3,1)--(4,2) (3,2)--(4,1);
\draw [dashed] (2,2)--(2,3);
\end{tikzpicture}}
&\raisebox{5mm}{$\rightarrow$}&
{\begin{tikzpicture}[scale=0.5]
\foreach \x in {1,2,3,4} {\foreach \y in {1,2,3} {\node at (\x,\y) [fill,circle,inner sep=1pt] {};};};
\draw (1,1)--(1,2) (2,1)--(2,2) (3,1)--(3,2) (4,1)--(4,2)--(4,3); 
\draw (1,1)--(2,1)--(3,1)--(4,1) (1,2)--(2,2)--(3,2)--(4,2) (1,3)--(2,3)--(3,3)--(4,3);
\draw (1,1) to [out=70, in=190] (4,3);
\draw (1,2) to [out=30, in=150] (4,2);
\draw (1,3) to [out=350, in=110] (4,1);
\draw (1,2) to [out=20, in=160] (3,2);
\draw (2,2) to [out=20, in=160] (4,2);
\draw (1,1)--(2,2) (1,2)--(2,1) (3,1)--(4,2) (3,2)--(4,1);
\draw [dashed] (3,2)--(3,3);
\end{tikzpicture}}
&\raisebox{5mm}{$\rightarrow$}&
{\begin{tikzpicture}[scale=0.5]
\foreach \x in {1,2,3,4} {\foreach \y in {1,2,3} {\node at (\x,\y) [fill,circle,inner sep=1pt] {};};};
\draw (1,1)--(1,2) (2,1)--(2,2) (3,1)--(3,2) (4,1)--(4,2)--(4,3); 
\draw (1,1)--(2,1)--(3,1)--(4,1) (1,2)--(2,2)--(3,2)--(4,2) (1,3)--(2,3)--(3,3)--(4,3);
\draw (1,1) to [out=70, in=190] (4,3);
\draw (1,2) to [out=30, in=150] (4,2);
\draw (1,3) to [out=350, in=110] (4,1);
\draw (1,2) to [out=20, in=160] (3,2);
\draw (2,2) to [out=20, in=160] (4,2);
\draw (1,1)--(2,2) (1,2)--(2,1) (3,1)--(4,2) (3,2)--(4,1);
\draw [ultra thick] (4,1) to [out=70, in=290] (4,3);
\end{tikzpicture}}
&\raisebox{5mm}{$\rightarrow$}&
{\begin{tikzpicture}[scale=0.5]
\foreach \x in {1,2,3,4} {\foreach \y in {1,2,3} {\node at (\x,\y) [fill,circle,inner sep=1pt] {};};};
\draw (1,1)--(1,2) (2,1)--(2,2) (3,1)--(3,2) (4,1)--(4,2)--(4,3); 
\draw (1,1)--(2,1)--(3,1)--(4,1) (1,2)--(2,2)--(3,2)--(4,2) (1,3)--(2,3)--(3,3)--(4,3);
\draw (1,1) to [out=70, in=190] (4,3);
\draw (1,2) to [out=30, in=150] (4,2);
\draw (1,2) to [out=20, in=160] (3,2);
\draw (2,2) to [out=20, in=160] (4,2);
\draw (1,1)--(2,2) (1,2)--(2,1) (3,1)--(4,2) (3,2)--(4,1);
\draw (4,1) to [out=70, in=290] (4,3);
\draw [dashed] (1,3) to [out=350, in=110] (4,1);
\end{tikzpicture}}
\\
\raisebox{5mm}{$\rightarrow$}&
{\begin{tikzpicture}[scale=0.5]
\node at (1,3) [fill,circle,inner sep=1pt] {};
\node at (2,3) [fill,circle,inner sep=1pt] {};
\node at (4,3) [fill,circle,inner sep=1pt] {};
\foreach \x in {1,2,3,4} {\foreach \y in {1,2} {\node at (\x,\y) [fill,circle,inner sep=1pt] {};};};
\draw (1,1)--(1,2) (2,1)--(2,2) (3,1)--(3,2) (4,1)--(4,2)--(4,3); 
\draw (1,1)--(2,1)--(3,1)--(4,1) (1,2)--(2,2)--(3,2)--(4,2) (1,3)--(2,3);
\draw (1,1) to [out=70, in=190] (4,3);
\draw (1,2) to [out=30, in=150] (4,2);
\draw (1,2) to [out=20, in=160] (3,2);
\draw (2,2) to [out=20, in=160] (4,2);
\draw (1,1)--(2,2) (1,2)--(2,1) (3,1)--(4,2) (3,2)--(4,1);
\draw (4,1) to [out=70, in=290] (4,3);
\draw [dashed] (2,3)--(4,3);
\node at (3,3) [circle, draw=black, fill=white, inner sep=1pt] {};
\end{tikzpicture}}
&\raisebox{5mm}{$\rightarrow$}&
{\begin{tikzpicture}[scale=0.5]
\node at (1,3) [fill,circle,inner sep=1pt] {};
\node at (2,3) [fill,circle,inner sep=1pt] {};
\node at (4,3) [fill,circle,inner sep=1pt] {};
\foreach \x in {1,2,3,4} {\foreach \y in {1,2} {\node at (\x,\y) [fill,circle,inner sep=1pt] {};};};
\draw (1,1)--(1,2) (2,1)--(2,2) (3,1)--(3,2) (4,1)--(4,2)--(4,3); 
\draw (1,1)--(2,1) (3,1)--(4,1) (1,2)--(2,2)--(3,2)--(4,2) (1,3)--(2,3);
\draw (1,1) to [out=70, in=190] (4,3);
\draw (1,2) to [out=30, in=150] (4,2);
\draw (1,2) to [out=20, in=160] (3,2);
\draw (2,2) to [out=20, in=160] (4,2);
\draw (1,1)--(2,2) (1,2)--(2,1) (3,1)--(4,2) (3,2)--(4,1);
\draw (4,1) to [out=70, in=290] (4,3);
\draw [dashed] (2,1)--(3,1);
\end{tikzpicture}}
&\raisebox{5mm}{$\rightarrow$}&
{\begin{tikzpicture}[scale=0.5]
\node at (1,3) [fill,circle,inner sep=1pt] {};
\node at (2,3) [fill,circle,inner sep=1pt] {};
\node at (4,3) [fill,circle,inner sep=1pt] {};
\foreach \x in {1,2,3,4} {\foreach \y in {1,2} {\node at (\x,\y) [fill,circle,inner sep=1pt] {};};};
\draw (1,1)--(1,2) (2,1)--(2,2) (3,1)--(3,2) (4,1)--(4,2)--(4,3); 
\draw (1,1)--(2,1) (3,1)--(4,1) (1,2)--(2,2)--(3,2)--(4,2) (1,3)--(2,3);
\draw (1,1) to [out=70, in=190] (4,3);
\draw (1,2) to [out=30, in=150] (4,2);
\draw (1,2) to [out=20, in=160] (3,2);
\draw (2,2) to [out=20, in=160] (4,2);
\draw (1,1)--(2,2) (1,2)--(2,1) (3,1)--(4,2) (3,2)--(4,1);
\draw (4,1) to [out=70, in=290] (4,3);
\draw [ultra thick] (3,2)--(4,3);
\end{tikzpicture}}
&\raisebox{5mm}{$\rightarrow$}&
{\begin{tikzpicture}[scale=0.5]
\node at (1,3) [fill,circle,inner sep=1pt] {};
\node at (2,3) [fill,circle,inner sep=1pt] {};
\foreach \x in {1,2,3,4} {\foreach \y in {1,2} {\node at (\x,\y) [fill,circle,inner sep=1pt] {};};};
\draw (1,1)--(1,2) (2,1)--(2,2) (3,1)--(3,2) (4,1)--(4,2); 
\draw (1,1)--(2,1) (3,1)--(4,1) (1,2)--(2,2)--(3,2)--(4,2) (1,3)--(2,3);
\draw (1,2) to [out=30, in=150] (4,2);
\draw (1,2) to [out=20, in=160] (3,2);
\draw (2,2) to [out=20, in=160] (4,2);
\draw (1,1)--(2,2) (1,2)--(2,1) (3,1)--(4,2) (3,2)--(4,1);
\draw [dashed] (1,1) to [out=70, in=190] (4,3);
\draw [dashed] (3,2)--(4,3) (4,2)--(4,3);
\draw [dashed] (4,1) to [out=70, in=290] (4,3);
\node at (4,3) [circle, draw=black, fill=white, inner sep=1pt] {};
\node at (2.5, 1.5) {$G$};
\end{tikzpicture}}
\end{tabular}
\caption{A sequence of operations which induces $I(M_{3,4}) \simpsimeq I(G \sqcup e)$}
\label{M34figure2}
\end{figure}

\end{itemize}
By these base cases, the proof is completed.
\end{proof}

\begin{proof}[Proof of Corollary \ref{CHnsimple}]
Let $M^H_{1,n}$ denote the graph obtained from $M_{2,2n}$ by deleting $n$ edges $1\overline{1}, 3\overline{3}, \ldots, (2n-1)\overline{2n-1}$. We first obtain
 $I(C^H_{1,n+1}) \simpsimeq I(C^H_{1,n+1} \cup (2n+2)\overline{2n+2})$ by
$$\add((2n+2)\overline{3},2), \add((2n+2)\overline{2n+2},\overline{2}), \del((2n+2)\overline{3},2)$$
(see Figure \ref{CH1nsequence}).
\begin{figure}[H]
\centering
\begin{tabular}{cccc}
&
{\begin{tikzpicture}
[scale=0.6]
\foreach \x in {1,2,3,4,5,6} {\foreach \y in {1,2} {\node at (\x,\y) [fill,circle,inner sep=1pt] {};};};
\draw (0.5,1)--(1,1)--(2,1)--(3,1)--(4,1)--(5,1)--(6,1)--(6.5,1);
\draw (0.5,2)--(1,2)--(2,2)--(3,2)--(4,2)--(5,2)--(6,2)--(6.5,2);
\draw (2,1)--(2,2) (4,1)--(4,2) (6,1)--(6,2);
\node at (3,0.5) {\footnotesize $2n+2$};
\node at (4,0.5) {\footnotesize $1$};
\node at (5,0.5) {\footnotesize $2$};
\node at (6,0.5) {\footnotesize $3$};
\end{tikzpicture}}
&\raisebox{7mm}{$\rightarrow$}&
{\begin{tikzpicture}
[scale=0.6]
\foreach \x in {1,2,3,4,5,6} {\foreach \y in {1,2} {\node at (\x,\y) [fill,circle,inner sep=1pt] {};};};
\draw (0.5,1)--(1,1)--(2,1)--(3,1)--(4,1)--(5,1)--(6,1)--(6.5,1);
\draw (0.5,2)--(1,2)--(2,2)--(3,2)--(4,2)--(5,2)--(6,2)--(6.5,2);
\draw (2,1)--(2,2) (4,1)--(4,2) (6,1)--(6,2);
\draw [ultra thick] (3,1)--(6,2);
\node at (1,0.4) {};
\end{tikzpicture}}
\\
\raisebox{2mm}{$\rightarrow$}&
{\begin{tikzpicture}
[scale=0.6]
\foreach \x in {1,2,3,4,5,6} {\foreach \y in {1,2} {\node at (\x,\y) [fill,circle,inner sep=1pt] {};};};
\draw (0.5,1)--(1,1)--(2,1)--(3,1)--(4,1)--(5,1)--(6,1)--(6.5,1);
\draw (0.5,2)--(1,2)--(2,2)--(3,2)--(4,2)--(5,2)--(6,2)--(6.5,2);
\draw (2,1)--(2,2) (4,1)--(4,2) (6,1)--(6,2);
\draw (3,1)--(6,2);
\draw [ultra thick] (3,1)--(3,2);
\end{tikzpicture}}
&\raisebox{2mm}{$\rightarrow$}&
{\begin{tikzpicture}
[scale=0.6]
\foreach \x in {1,2,3,4,5,6} {\foreach \y in {1,2} {\node at (\x,\y) [fill,circle,inner sep=1pt] {};};};
\draw (0.5,1)--(1,1)--(2,1)--(3,1)--(4,1)--(5,1)--(6,1)--(6.5,1);
\draw (0.5,2)--(1,2)--(2,2)--(3,2)--(4,2)--(5,2)--(6,2)--(6.5,2);
\draw (2,1)--(2,2) (3,1)--(3,2) (4,1)--(4,2) (6,1)--(6,2);
\draw [dashed] (3,1)--(6,2);
\end{tikzpicture}}
\end{tabular}
\caption{A sequence of operations which induces $I(C^H_{1,n+1}) \simpsimeq I(C^H_{1,n+1} \cup (2n+2)\overline{2n+2})$}
\label{CH1nsequence}
\end{figure}
\noindent
Then, as mentioned in Remark \ref{remmainm2}, we can apply Theorem \ref{mainm2} to $C^H_{1, n+1} \cup (2n+2)\overline{2n+2}$ and obtain
$$I(C^H_{1,n+1}) \simpsimeq I(C^H_{1, n+1} \cup (2n+2)\overline{2n+2}) \simpsimeq \Sigma I(M^H_{1,n}).$$
Similarly, we also get
$$I(M^H_{1,n+1}) \simpsimeq \Sigma I(C^H_{1,n}).$$
The base cases are
\begin{align*}
&I(C^H_{1,1}) \searrow *, & &I(M^H_{1,1}) \searrow {\bigvee}_2 {\dia}^0
\end{align*} 
(see Figure \ref{CH11MH11figure}). 
\begin{figure}[H]
\centering
\begin{tabular}{ccccccc}
\begin{tikzpicture}
[scale=0.6]
\foreach \x in {(0,2),(1,2),(0,3),(1,3)} {\node at \x [fill, circle, black, inner sep=1pt] {};};
\draw (1,2)--(0,2)--(0,3)--(1,3);
\draw (0,2) to [out=-45, in=-135] (1,2);
\draw (0,3) to [out=-45, in=-135] (1,3);
\node at (0.5,3.5) {$C^H_{1,1}$};
\end{tikzpicture}
& &
\begin{tikzpicture}
[scale=0.6]
\foreach \x in {(0,2),(1,2),(0,3),(1,3)} {\node at \x [fill, circle, black, inner sep=1pt] {};};
\draw (0,2)--(1,3)--(1,2)--(0,3);
\node at (0.5,3.5) {$I(C^H_{1,1})$};
\end{tikzpicture}
& &
\begin{tikzpicture}
[scale=0.6]
\foreach \x in {(0,2),(1,2),(0,3),(1,3)} {\node at \x [fill, circle, black, inner sep=1pt] {};};
\draw (1,2)--(0,2)--(0,3)--(1,3);
\draw (0,2)--(1,3) (0,3)--(1,2);
\node at (0.5,3.5) {$M^H_{1,1}$};
\end{tikzpicture}
& &
\begin{tikzpicture}
[scale=0.6]
\foreach \x in {(0,2),(1,2),(0,3),(1,3)} {\node at \x [fill, circle, black, inner sep=1pt] {};};
\draw (1,2)--(1,3);
\node at (0.5,3.5) {$I(M^H_{1,1})$};
\end{tikzpicture}
\end{tabular}
\caption{$C^H_{1,1}$, $M^H_{1,1}$ and their independence complexes}
\label{CH11MH11figure}
\end{figure}
By induction, we complete the proof.
\end{proof}

\appendix
\section{Computation of $\widetilde{\chi}(I(P_{4,n}))$}
\label{appendixA}
Recall that the reduced Euler characteristic $\widetilde{\chi} (K)$ of a simplicial complex $K$ is defined by 
$$\widetilde{\chi}(K) = \sum_{\sigma \in K} (-1)^{\dim \sigma}.$$
\begin{proposition}
\label{EulerP4n}
We have
\begin{align*}
&\widetilde{\chi}(I(P_{4,6k+i})) = \left \{
\begin{aligned}
&-2k-1 &\quad &(i = 0, 2), \\
&2k &\quad &(i = 1), \\
&2k+1 &\quad &(i = 3,5), \\
&-2k-2 &\quad &(i=4) .
\end{aligned} \right. 
\end{align*}
\end{proposition}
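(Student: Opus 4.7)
Let $a_n := \widetilde{\chi}(I(P_{4,n}))$. The plan is to establish the linear recurrence $a_{n+6} = a_n - 2(-1)^n$ and to verify the six base cases $a_0, \ldots, a_5$; once these are in hand, the proposition follows by induction on $k$.

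To derive the recurrence I will use the standard Mayer--Vietoris formula for independence complexes: for any vertex $v$ of a graph $G$,
\[
\widetilde{\chi}(I(G)) = \widetilde{\chi}(I(G \setminus \{v\})) - \widetilde{\chi}(I(G \setminus N[v])),
\]
which follows from the decomposition $I(G) = I(G \setminus \{v\}) \cup (v * I(G \setminus N[v]))$, with intersection $I(G \setminus N[v])$ and reduced Euler characteristic $0$ on the cone $v * I(G \setminus N[v])$. Iterating this on vertices at the right end of $P_{4,n}$ yields a closed system of linear recursions among the Euler characteristics of $P_{4,n}$ and of a small family of ``notched'' variants (that is, $P_{4,n}$ with a few vertices of the last column removed). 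Equivalently, this is a transfer-matrix computation on the $8$ independent sets of a single column $P_{4,1}$, so $a_n$ satisfies a homogeneous linear recurrence of order at most $8$.

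Since the predicted formula is periodic of period $6$ in $n$ with linear growth in $k = \lfloor n/6 \rfloor$, the minimal polynomial of the transfer matrix on the relevant invariant subspace should divide $(x+1)(x^6 - 1) = x^7 + x^6 - x - 1$; this gives the recurrence $a_{n+7} + a_{n+6} - a_{n+1} - a_n = 0$, equivalent to $a_{n+6} - a_n = -2(-1)^n$. I would verify this either by explicit diagonalization of the $8 \times 8$ transfer matrix or, more pragmatically, by computing $a_0, \ldots, a_{12}$ directly from the Mayer--Vietoris formula and observing that the relation is then forced by linear-algebra uniqueness. The main obstacle is the bookkeeping in the notched-grid system: it is purely mechanical, but the order-$8$ size makes explicit diagonalization tedious, so direct numerical verification of the recurrence on the first few values is likely the cleaner route.
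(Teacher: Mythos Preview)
Your approach is sound and would work, but it takes a genuinely different route from the paper. The paper introduces two auxiliary graphs $X_n = P_{4,n} \cup 1\widetilde{1}$ and $Y_n = P_{4,n} \setminus \{1,\widetilde{1}\}$, and proves by explicit collapse sequences (in the spirit of the rest of the paper) that $I(P_{4,n}) \simpsimeq \Sigma^2 I(X_{n-2})$ and $I(Y_n) \simpsimeq \Sigma^3 I(Y_{n-3})$. Combining the first of these with a cofiber-sequence splitting of $X_{n-2}$ along the added edge $1\widetilde{1}$ yields the two-step recurrence $a_n = a_{n-2} - \widetilde{\chi}(I(Y_{n-3}))$, and the second shows that $\widetilde{\chi}(I(Y_n))$ is $6$-periodic with values read off from $Y_1, Y_2, Y_3$. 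Iterating three times recovers exactly your relation $a_{n+6} = a_n - 2(-1)^n$, so the arithmetic is the same; the packaging is through simple-homotopy moves rather than a transfer matrix.

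What each buys: your transfer-matrix argument is self-contained and purely numerical, needing no graph-specific collapses; the paper's route stays within the collapse/expansion framework used throughout and yields actual simple-homotopy equivalences for the $Y_n$ as a byproduct. One caution on your ``pragmatic'' variant: knowing abstractly that $(a_n)$ obeys \emph{some} order-$\le 8$ recurrence and that the conjectured sequence obeys an order-$7$ one, their difference a priori only satisfies an order-$\le 15$ recurrence, so thirteen values $a_0,\dots,a_{12}$ need not suffice unless you also verify that $(x+1)(x^6-1)$ divides the characteristic polynomial of your $8\times 8$ transfer matrix. Either computing through $a_{14}$, or checking that divisibility directly, closes the gap.
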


We use the following lemma, which was shown by Adamaszek \cite{Ada12}.
\begin{lemma}{\cite[Proposition 3.4]{Ada12}}
\label{delcofib}
Let $G$ be a graph and $e$ be an edge of $G$. Then we have the following cofiber sequence:
$$\begin{tikzcd}
I((G \setminus N[e]) \sqcup e) \ar[r] & I(G) \ar[r] &I(G-e), 
\end{tikzcd}$$
where the two maps are the inclusion maps.
\end{lemma}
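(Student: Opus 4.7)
The plan is to present $I(G-e)$ as a homotopy pushout one of whose corners is contractible, thereby exhibiting $I(G-e)$ as the mapping cone of the inclusion $I((G \setminus N[e]) \sqcup e) \hookrightarrow I(G)$. Write $e = uv$ and let $K \subseteq I(G-e)$ denote the closed star of the $1$-simplex $\{u,v\}$, so $K = \{u,v\} * L$ where $L$ is the link of $\{u,v\}$ in $I(G-e)$.

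First I would observe that the decomposition $I(G-e) = I(G) \cup K$ is immediate: an independent set of $G-e$ either omits at least one of $u,v$ and so is still independent in $G$, or contains both and therefore lies in $K$. Removing the edge $uv$ does not change adjacencies involving any other vertex, so $\sigma \cup \{u,v\}$ is an independent set of $G-e$ iff $\sigma \subseteq V(G) \setminus N[e]$ and $\sigma$ is independent in $G \setminus N[e]$; this identifies $L$ with $I(G \setminus N[e])$, and makes $K = \{u,v\} * I(G \setminus N[e])$ a join with a simplex, which is contractible. Next, the faces of $K$ that lie in $I(G)$ are precisely the simplices $\tau \cup \sigma$ with $\sigma \in I(G \setminus N[e])$ and $\tau$ a proper face of $\{u,v\}$, so $I(G) \cap K = \partial\{u,v\} * I(G \setminus N[e])$. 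Using the join formula $I(G_1 \sqcup G_2) = I(G_1) * I(G_2)$ for disjoint unions of graphs together with $I(e) = \partial\{u,v\}$, this intersection equals $I((G \setminus N[e]) \sqcup e)$ on the nose, and its inclusion into $I(G)$ is the natural inclusion of independence complexes.

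The square with corners $I((G \setminus N[e]) \sqcup e)$, $K$, $I(G)$, $I(G-e)$ is therefore a strict pushout of simplicial complexes along subcomplex inclusions, hence a homotopy pushout. Since $K$ is contractible, this pushout is a model for the mapping cone of $I((G \setminus N[e]) \sqcup e) \hookrightarrow I(G)$, which is exactly the claimed cofiber sequence with both maps being the stated inclusions.

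The main obstacle will be the simplex-level bookkeeping in identifying the intersection $I(G) \cap K$ with $I((G \setminus N[e]) \sqcup e)$ as subcomplexes of $I(G)$; the point is to confirm that under all the identifications the first map is literally the inclusion of independence complexes named in the statement and not merely a homotopic substitute. Once this combinatorial identification is pinned down, the topological conclusion follows formally from the contractibility of $K$ and the fact that simplicial subcomplex inclusions are cofibrations.
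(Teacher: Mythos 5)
Your argument is correct, and it is worth noting that the paper itself does not prove this lemma at all: it is imported verbatim from Adamaszek \cite[Proposition 3.4]{Ada12}, so there is no in-paper proof to compare against. Your decomposition is the standard one and essentially reproduces the argument in the cited reference: writing $e=uv$, every face of $I(G-e)$ either misses one of $u,v$ (hence lies in $I(G)$) or contains both (hence lies in the closed star $K=\{u,v\}*I(G\setminus N[e])$); the intersection $I(G)\cap K$ is $\partial\{u,v\}*I(G\setminus N[e])=I(e\sqcup(G\setminus N[e]))$, and since all three pieces are subcomplexes of $I(G-e)$, the square is a pushout along cofibrations with $K$ contractible, exhibiting $I(G-e)$ as the mapping cone of the stated inclusion. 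All the simplex-level identifications you flag as the ``main obstacle'' do go through: removing $uv$ changes no adjacency involving a third vertex, so the link of $\{u,v\}$ in $I(G-e)$ is exactly $I(G\setminus N[e])$, and both maps in the resulting cofiber sequence are literally the inclusions named in the statement.
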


\begin{proof}[Proof of Proposition \ref{EulerP4n}]
Let $X_n = P_{4,n} \cup 1\widetilde{1}$ and $Y_n = P_{4,n} \setminus \{1, \widetilde{1} \}$. We obtain $I(P_{4,n}) \simpsimeq \Sigma^2 I(X_{n-2})$ by
\begin{align*}
&\del(\overline{2},1), \del(\widehat{2},\widetilde{1}), \add(\widehat{1}3,1), \del(23,\overline{1}), \del(\overline{1},2), 
\add(3\widetilde{3},\widetilde{1}), \del(\widetilde{2}\widetilde{3},\widehat{1}), \del(\widehat{1},\widetilde{2})
\end{align*}
(see Figure \ref{P4nfigure}). Then it follows that
\begin{align}
\label{P4nX}
\widetilde{\chi}(I(P_{4,n})) =\widetilde{\chi}(I(X_{n-2})). 
\end{align}
\begin{figure}[H]
\centering
\begin{tabular}{ccccccc}
{\begin{tikzpicture}
[scale=0.5]
\draw (1,1)--(4.5,1) (1,2)--(4.5,2) (1,3)--(4.5,3) (1,4)--(4.5,4);
\draw (1,1)--(1,4) (2,1)--(2,4) (3,1)--(3,4) (4,1)--(4,4);
\foreach \x in {1,2,3,4} {\node at (\x,0) {\x};};
\foreach \x in {1,2,3,4} {\node at (\x, 1) [fill,circle,inner sep=1pt] {};};
\foreach \x in {1,2,3,4} {\node at (\x, 2) [fill,circle,inner sep=1pt] {};};
\foreach \x in {1,2,3,4} {\node at (\x, 3) [fill,circle,inner sep=1pt] {};};
\foreach \x in {1,2,3,4} {\node at (\x, 4) [fill,circle,inner sep=1pt] {};};
\node at (2.75,5) {$P_{4,n}$};
\end{tikzpicture}}
&\raisebox{13mm}{$\rightarrow$}&
{\begin{tikzpicture}
[scale=0.5]
\draw (1,1)--(4.5,1) (3,2)--(4.5,2) (3,3)--(4.5,3) (1,4)--(4.5,4);
\draw (1,1)--(1,4)  (3,1)--(3,4) (4,1)--(4,4);
\draw [dashed] (2,1)--(2,4) (1,2)--(3,2) (1,3)--(3,3);
\foreach \x in {1,2,3,4} {\node at (\x,0) {\x};};
\foreach \x in {1,2,3,4} {\node at (\x, 1) [fill,circle,inner sep=1pt] {};};
\foreach \x in {1,3,4} {\node at (\x, 2) [fill,circle,inner sep=1pt] {};};
\foreach \x in {1,3,4} {\node at (\x, 3) [fill,circle,inner sep=1pt] {};};
\foreach \x in {1,2,3,4} {\node at (\x, 4) [fill,circle,inner sep=1pt] {};};
\node at (2,2) [circle, draw=black, fill=white, inner sep=1pt] {};
\node at (2,3) [circle, draw=black, fill=white, inner sep=1pt] {};
\node at (2.25,5) {};
\end{tikzpicture}}
&\raisebox{13mm}{$\rightarrow$}&
{\begin{tikzpicture}
[scale=0.5]
\draw (1,1)--(2,1) (3,1)--(4.5,1) (3,2)--(4.5,2) (3,3)--(4.5,3) (1,4)--(4.5,4);
\draw (1,3)--(1,4)  (3,1)--(3,4) (4,1)--(4,4);
\draw [dashed] (2,1)--(3,1) (1,1)--(1,3);
\foreach \x in {1,2,3,4} {\node at (\x,0) {\x};};
\foreach \x in {1,2,3,4} {\node at (\x, 1) [fill,circle,inner sep=1pt] {};};
\foreach \x in {3,4} {\node at (\x, 2) [fill,circle,inner sep=1pt] {};};
\foreach \x in {1,3,4} {\node at (\x, 3) [fill,circle,inner sep=1pt] {};};
\foreach \x in {1,2,3,4} {\node at (\x, 4) [fill,circle,inner sep=1pt] {};};
\draw [ultra thick] (1,3) to [out=290, in=160] (3,1);
\node at (1,2) [circle, draw=black, fill=white, inner sep=1pt] {};
\node at (2.25,5) {};
\end{tikzpicture}}
&\raisebox{13mm}{$\rightarrow$}&
{\begin{tikzpicture}
[scale=0.5]
\draw (1,1)--(2,1) (3,1)--(4.5,1) (1,4)--(2,4) (3,2)--(4.5,2) (3,3)--(4.5,3) (3,4)--(4.5,4);
\draw (3,1)--(3,4) (4,1)--(4,4);
\draw [dashed] (2,4)--(3,4) (1,3)--(1,4);
\foreach \x in {1,2,3,4} {\node at (\x,0) {\x};};
\foreach \x in {1,2,3,4} {\node at (\x, 1) [fill,circle,inner sep=1pt] {};};
\foreach \x in {3,4} {\node at (\x, 2) [fill,circle,inner sep=1pt] {};};
\foreach \x in {1,3,4} {\node at (\x, 3) [fill,circle,inner sep=1pt] {};};
\foreach \x in {1,2,3,4} {\node at (\x, 4) [fill,circle,inner sep=1pt] {};};
\draw [dashed] (1,3) to [out=290, in=160] (3,1);
\draw [ultra thick] (3,1) to [out=120, in=240] (3,4);
\node at (1,3) [circle, draw=black, fill=white, inner sep=1pt] {};
\node at (2.75,5) {$X_{n-2} \sqcup e \sqcup e$};
\end{tikzpicture}}
\end{tabular}
\caption{A sequence of operations which induces $I(P_{4,n}) \simpsimeq I(X_{n-2} \sqcup e \sqcup e)$}
\label{P4nfigure}
\end{figure}

By Lemma \ref{delcofib}, a removal of the edge $1\widetilde{1}$ from $X_{n-2}$ yields the following cofiber sequence:
$$\begin{tikzcd}
\Sigma I(Y_{n-3}) \ar[r] & I(X_{n-2}) \ar[r] & I(P_{4,n-2}), 
\end{tikzcd}$$
which implies
\begin{align}
\label{P4ncofiber}
\widetilde{\chi}(I(P_{4,n-2})) &= \widetilde{\chi}(I(X_{n-2})) - \widetilde{\chi}(\Sigma I(Y_{n-3})) \nonumber \\
&= \widetilde{\chi}(I(X_{n-2})) + \widetilde{\chi}(I(Y_{n-3})) .
\end{align}
By (\ref{P4nX}) and (\ref{P4ncofiber}), we have
\begin{align}
\label{P4ninduction}
\widetilde{\chi}(I(P_{4,n})) =\widetilde{\chi}(I(P_{4,n-2})) - \widetilde{\chi}(I(Y_{n-3})) .
\end{align}
As a base case, we have $I(P_{4,1}) \simpsimeq *$ and $I(P_{4,2}) \simpsimeq {\dia}^1$, which imply
\begin{align}
\label{P4nbasecase}
\widetilde{\chi}(I(P_{4,1})) = 0, & & \widetilde{\chi}(I(P_{4,2})) =-1.
\end{align}
Therefore, it is sufficient to compute $\widetilde{\chi}(I(Y_n))$.

We obtain $I(Y_n) \simpsimeq \Sigma^3 I(Y_{n-3})$ by
\begin{align*}
\del(\overline{2},\widehat{1}), \del(\widehat{2},\overline{1}), \del(\overline{3},2), \del(\widehat{3},\widetilde{2}), 
\del(4,2), \del(\widetilde{4},\widetilde{2})
\end{align*}
(see Figure \ref{Ynfigure}).
\begin{figure}[H]
\centering
\begin{tabular}{ccccc}
{\begin{tikzpicture}
[scale=0.5]
\draw (2,1)--(5.5,1) (1,2)--(5.5,2) (1,3)--(5.5,3) (2,4)--(5.5,4);
\draw (1,2)--(1,3) (2,1)--(2,4) (3,1)--(3,4) (4,1)--(4,4) (5,1)--(5,4);
\foreach \x in {1,2,3,4,5} {\node at (\x,0) {\x};};
\foreach \x in {2,3,4,5} {\node at (\x, 1) [fill,circle,inner sep=1pt] {};};
\foreach \x in {1,2,3,4,5} {\node at (\x, 2) [fill,circle,inner sep=1pt] {};};
\foreach \x in {1,2,3,4,5} {\node at (\x, 3) [fill,circle,inner sep=1pt] {};};
\foreach \x in {2,3,4,5} {\node at (\x, 4) [fill,circle,inner sep=1pt] {};};
\node at (3.25,5) {$Y_n$};
\end{tikzpicture}}
&\raisebox{13mm}{$\rightarrow$}&
{\begin{tikzpicture}
[scale=0.5]
\draw (2,1)--(5.5,1) (3,2)--(5.5,2) (3,3)--(5.5,3) (2,4)--(5.5,4);
\draw (1,2)--(1,3) (3,1)--(3,4) (4,1)--(4,4) (5,1)--(5,4);
\foreach \x in {1,2,3,4,5} {\node at (\x,0) {\x};};
\foreach \x in {2,3,4,5} {\node at (\x, 1) [fill,circle,inner sep=1pt] {};};
\foreach \x in {1,3,4,5} {\node at (\x, 2) [fill,circle,inner sep=1pt] {};};
\foreach \x in {1,3,4,5} {\node at (\x, 3) [fill,circle,inner sep=1pt] {};};
\foreach \x in {2,3,4,5} {\node at (\x, 4) [fill,circle,inner sep=1pt] {};};
\draw [dashed] (2,1)--(2,4) (1,2)--(3,2) (1,3)--(3,3);
\node at (2,2) [circle, draw=black, fill=white, inner sep=1pt] {};
\node at (2,3) [circle, draw=black, fill=white, inner sep=1pt] {};
\node at (3.25,5) {};
\end{tikzpicture}}
&\raisebox{13mm}{$\rightarrow$}&
{\begin{tikzpicture}
[scale=0.5]
\draw (2,1)--(3,1) (5,1)--(5.5,1) (4,2)--(5.5,2) (4,3)--(5.5,3) (2,4)--(3,4) (5,4)--(5.5,4);
\draw (1,2)--(1,3) (4,2)--(4,3) (5,1)--(5,4);
\foreach \x in {1,2,3,4,5} {\node at (\x,0) {\x};};
\foreach \x in {2,3,5} {\node at (\x, 1) [fill,circle,inner sep=1pt] {};};
\foreach \x in {1,4,5} {\node at (\x, 2) [fill,circle,inner sep=1pt] {};};
\foreach \x in {1,4,5} {\node at (\x, 3) [fill,circle,inner sep=1pt] {};};
\foreach \x in {2,3,5} {\node at (\x, 4) [fill,circle,inner sep=1pt] {};};
\draw [dashed] (3,1)--(3,4) (3,2)--(4,2) (3,3)--(4,3) (4,1)--(4,2) (4,3)--(4,4);
\node at (3,2) [circle, draw=black, fill=white, inner sep=1pt] {};
\node at (3,3) [circle, draw=black, fill=white, inner sep=1pt] {};
\node at (4,1) [circle, draw=black, fill=white, inner sep=1pt] {};
\node at (4,4) [circle, draw=black, fill=white, inner sep=1pt] {};
\node at (3.25,5) {$Y_{n-3} \sqcup e \sqcup e \sqcup e$};
\end{tikzpicture}}
\end{tabular}
\caption{A sequence of operations which induces $I(Y_n) \simpsimeq I(Y_{n-3} \sqcup e \sqcup e \sqcup e)$}
\label{Ynfigure}
\end{figure}

This implies
\begin{align}
\label{P4nrecursion}
\widetilde{\chi}(I(Y_{n})) = - \widetilde{\chi}(I(Y_{n-3})).
\end{align}
As a base case, we have $I(Y_1) \simpsimeq {\dia}^0$, $I(Y_2) \simpsimeq *$ and $I(Y_3) \simpsimeq {\dia}^2$ (see Figure \ref{P4nYnbasecase}). 
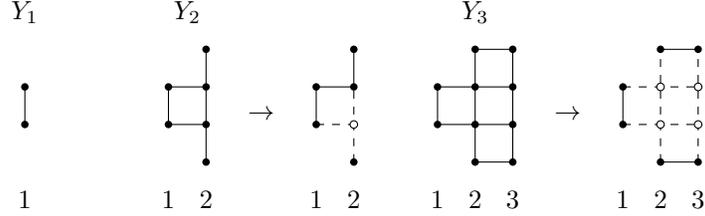
\begin{figure}[H]
\centering
\begin{tabular}{cccccccccccc}
{\begin{tikzpicture}
[scale=0.5]
\draw (1,2)--(1,3);
\foreach \x in {1} {\node at (\x, 0) {\x};};
\foreach \x in {1} {\node at (\x, 2) [fill,circle,inner sep=1pt] {};};
\foreach \x in {1} {\node at (\x, 3) [fill,circle,inner sep=1pt] {};};
\node at (1,5) {$Y_1$};
\end{tikzpicture}}
& & & &
{\begin{tikzpicture}
[scale=0.5]
\draw (1,2)--(1,3) (2,1)--(2,4);
\draw (1,2)--(2,2) (1,3)--(2,3);
\foreach \x in {1,2} {\node at (\x, 0) {\x};};
\foreach \x in {2} {\node at (\x, 1) [fill,circle,inner sep=1pt] {};};
\foreach \x in {1,2} {\node at (\x, 2) [fill,circle,inner sep=1pt] {};};
\foreach \x in {1,2} {\node at (\x, 3) [fill,circle,inner sep=1pt] {};};
\foreach \x in {2} {\node at (\x, 4) [fill,circle,inner sep=1pt] {};};
\node at (1.5,5) {$Y_2$};
\end{tikzpicture}}
&\raisebox{13mm}{$\rightarrow$}&
{\begin{tikzpicture}
[scale=0.5]
\draw (1,2)--(1,3) (2,3)--(2,4);
\draw (1,3)--(2,3);
\draw [dashed] (1,2)--(2,2) (2,1)--(2,3);
\foreach \x in {1,2} {\node at (\x, 0) {\x};};
\foreach \x in {2} {\node at (\x, 1) [fill,circle,inner sep=1pt] {};};
\foreach \x in {1} {\node at (\x, 2) [fill,circle,inner sep=1pt] {};};
\foreach \x in {1,2} {\node at (\x, 3) [fill,circle,inner sep=1pt] {};};
\foreach \x in {2} {\node at (\x, 4) [fill,circle,inner sep=1pt] {};};
\node at (2,2) [circle, draw=black, fill=white, inner sep=1pt] {};
\node at (1.5,5) {};
\end{tikzpicture}}
& &
{\begin{tikzpicture}
[scale=0.5]
\draw (1,2)--(1,3) (2,1)--(2,4) (3,1)--(3,4);
\draw (1,2)--(3,2) (1,3)--(3,3) (2,1)--(3,1) (2,4)--(3,4);
\foreach \x in {1,2,3} {\node at (\x, 0) {\x};};
\foreach \x in {2,3} {\node at (\x, 1) [fill,circle,inner sep=1pt] {};};
\foreach \x in {1,2,3} {\node at (\x, 2) [fill,circle,inner sep=1pt] {};};
\foreach \x in {1,2,3} {\node at (\x, 3) [fill,circle,inner sep=1pt] {};};
\foreach \x in {2,3} {\node at (\x, 4) [fill,circle,inner sep=1pt] {};};
\node at (2,5) {$Y_3$};
\end{tikzpicture}}
&\raisebox{13mm}{$\rightarrow$}&
{\begin{tikzpicture}
[scale=0.5]
\draw (1,2)--(1,3) (2,1)--(3,1) (2,4)--(3,4);
\draw [dashed] (2,1)--(2,4) (3,1)--(3,4) (1,2)--(3,2) (1,3)--(3,3) ;
\foreach \x in {1,2,3} {\node at (\x, 0) {\x};};
\foreach \x in {2,3} {\node at (\x, 1) [fill,circle,inner sep=1pt] {};};
\foreach \x in {1} {\node at (\x, 2) [fill,circle,inner sep=1pt] {};};
\foreach \x in {1} {\node at (\x, 3) [fill,circle,inner sep=1pt] {};};
\foreach \x in {2,3} {\node at (\x, 4) [fill,circle,inner sep=1pt] {};};
\foreach \x in {2,3} {\node at (\x, 2) [circle, draw=black, fill=white, inner sep=1pt] {};};
\foreach \x in {2,3} {\node at (\x, 3) [circle, draw=black, fill=white, inner sep=1pt] {};};
\node at (2,5) {};
\end{tikzpicture}}
\end{tabular}
\caption{$Y_1$, $Y_2$, $Y_3$ and the operations}
\label{P4nYnbasecase}
\end{figure}

Therefore, by (\ref{P4nrecursion}), we have 
\begin{align}
&\widetilde{\chi}(I(Y_{6k+i})) = \left \{
\begin{aligned}
\label{Yn}
&-1 &\quad &(i = 0, 4), \\
&1 &\quad &(i = 1,3), \\
&0 &\quad &(i = 2,5) .
\end{aligned} \right. 
\end{align}
Then, by (\ref{P4ninduction}), (\ref{P4nbasecase}) and (\ref{Yn}), we obtain the desired conclusion.
\end{proof}


\begin{thebibliography}{1}

\bibitem{Adahard}
Micha{\l} Adamaszek.
\newblock {Hard squares on cylinders revisited}.
\newblock {\em arXiv e-prints}, arXiv:1202.1655, Feb 2012.

\bibitem{Ada12}
Micha{\l} Adamaszek.
\newblock Splittings of independence complexes and the powers of cycles.
\newblock {\em Journal of Combinatorial Theory, Series A}, 119:1031--1047,
  2012.

\bibitem{Csorba09}
P{\'e}ter Csorba.
\newblock Subdivision yields Alexander duality on independence complexes.
\newblock {\em The Electronic Journal of Combinatorics}, 16(2):\#R11, 2009.

\bibitem{Engs09}
Alexander Engstr{\"{o}}m.
\newblock Complexes of directed trees and independence complexes.
\newblock {\em Discrete Mathematics}, 309:3299--3309, 2009.

\bibitem{Iriye12}
Kouyemon Iriye.
\newblock On the homotopy types of the independence complexes of grid graphs
  with cylindrical identification.
\newblock {\em Kyoto Journal of Mathematics}, 52(3):479--501, 2012.

\bibitem{Koz99}
Dmitry~N Kozlov.
\newblock Complexes of directed trees.
\newblock {\em Journal of Combinatorial theory, Series A}, 88:112--122, 1999.

\bibitem{Thap08}
Johan {Thapper}.
\newblock {Independence Complexes of Cylinders Constructed from Square and
  Hexagonal Grid Graphs}.
\newblock {\em arXiv e-prints}, arXiv:0812.1165, Dec 2008.

\end{thebibliography}
\end{document}